\documentclass[reqno,11pt]{amsart}

\setlength{\textheight}{8.1in}
\setlength{\oddsidemargin}{0.6cm}
\setlength{\evensidemargin}{0.6cm}
\setlength{\textwidth}{5.8in}

\usepackage{mathbbold}
\usepackage{amscd}
\usepackage{amssymb}
\usepackage{amsmath}
\usepackage{amsfonts}
\usepackage{amssymb, latexsym, amsmath, pb-diagram}
\usepackage[all,cmtip]{xy}
\usepackage{enumerate}
\usepackage{epic,eepic}
\usepackage{galois}
\xyoption{all}

\def\w{\widetilde}

\newcommand{\zz}{\mathbb{Z}}

\newcommand{\RR}{\mathbb{R}}

\newtheorem{theorem}{Theorem}[section]
\newtheorem{prop}[theorem]{Proposition}
\newtheorem{lemma}[theorem]{Lemma}
\newtheorem{cor}[theorem]{Corollary}

\theoremstyle{definition}
\newtheorem{definition}[theorem]{Definition}

\newtheorem{exmp}[theorem]{Example}
\newtheorem{cons}[theorem]{Construction}
\newtheorem{rem}[theorem]{Remark}
\theoremstyle{remark}

\newtheorem*{conv*}{Convention}

\numberwithin{equation}{section}

\begin{document}
\title{Simplicial (co)homeology groups: New \emph{PL} homeomorphsm invariants of polyhedra}
\author[F.~Fan \& Q.~Zheng]{Feifei Fan and Qibing zheng}
\thanks{The authors are supported by NSFC grant No. 11261062, No. 11471167 and SRFDP No. 20120031110025}
\address{Feifei Fan, School of Mathematical Sciences and LPMC, Nankai University, Tianjin 300071, P.~R.~China}
\email{fanfeifei@mail.nankai.edu.cn}
\address{Qibing Zheng, School of Mathematical Sciences and LPMC, Nankai University, Tianjin 300071, P.~R.~China}
\email{zhengqb@nankai.edu.com}
\subjclass[2010]{Primary 55U10, 57Q05; Secondary 55T05, 51M20}

\begin{abstract}
In this paper, we define (reduced) homeology groups and (reduced) cohomeology groups on finite simpicial complexes and prove that these groups are $PL$ homeomorphsm invariants of polyhedra, while they are not homotopy invariants. So these groups can reflect some information that (co)homology groups can not tell. We also define homeotopy type of polyhedra which is finer than homotopy type but coarser than homeomorphism class, and prove that (co)homeology groups are actually homeotopy invariants. In the last section of this paper, we give a geometric description of some special (co)homeology groups.
\end{abstract}
\keywords{simplicial complexes, polyhedra, stellar subdivisions, double complexes, PL homeomorphismes}
\maketitle
\section{Introduction}

Simplicial complexes or triangulations (first introduced by Poincar\'e) provide an elegant, rigorous and convenient tool for studing topological invariants by
combinatorial methods \cite{A30, H69}. The algebraic topology itself evolved from studing triangulations of topological spaces. Simplicial theory have always played a significant role in $PL$ topology, discrete and combinatorial geometry. But so far, there are only algebraic homotopy invariants of polyhedra (geometrical realizations of simplicial complexes), such as homology and cohomology groups, and combinatorial invariants that are not homeomorphism invariants, such as the Tor-algebras of face rings.

In \S\ref{sec:2}, corresponding to each simplicial complex $K$, we define a cohomeology spectral sequence $\{E_r(K),\Delta_r\}$ (resp. homeology spectral sequence $\{E^r(K),D_r\}$), whose $E_2$-term, denoted by $\mathcal {H}^{p,\,q}(K)$ (resp. $\mathcal {H}_{p,\,q}(K)$), are called the cohomeology groups (resp. homeology groups) of $K$. We prove that these groups are $PL$ homeomorphism invariants of polyhedra (\textbf{Corollary \ref{cor:1}}), but not homotopy invariants. In fact, cohomology (resp. homology) groups is the convergence of the cohomeology (resp. homeology) spectral sequence (\textbf{Proposition \ref{prop:1}}).

In \S\ref{sec:3}, we define a category whose objects are polyhedra and morphisms are non-degenerate maps.
Then we prove that $\mathcal {H}_{*,*}$ (resp. $\mathcal {H}^{*,*}$) is a covariant (resp. contravariant) functor from polyhedra and non-degenerate maps to groups and group homomorphisms.
(\textbf{Theorem \ref{thm:2}}).
Homeotopy is defined between non-degenerate maps and homeotopy type of polyhedra is defined by homeotopy equivalence. We prove that (co)homeology groups are actually homeotopy invariants of polyhedra (\textbf{Theorem \ref{thx}}). Homeotopy types of polyhedra are finer than homotopy types but coarser than homeomorphism classes. For example, (co)homeology groups distinguish disks of different dimension and many other contractible polyhedra.

In \S\ref{sec:4}, we discuss the geometric meanings of cohomeology groups $\mathcal {H}^{p,\,q}(K)$ for some special $p$ and $q$. For example, like $\mathrm{rank}\,H^0(X)$ equals the number of the path-connected components of $X$, $\mathrm{rank}\,\mathcal {H}^{n,n}(K)$ equals the number of the $n$-dimensional completely connected component
(see Definition \ref{def:2}) of $K$.

\section{Preliminaries}\label{sec:1}
The following definition is a brief review of finite simplicial complex theory.
\begin{definition}
 A (finite, abstract) \emph{simplicial complex} $K$ with vertex set $\mathcal {S}$
 is a collection of subsets of the finite set $S$ satisfying the following two conditions.
\begin{enumerate}[(1)]
\item For any $v\in \mathcal {S}$, $\{v\}\in K$.

\item If $\sigma\in K$ and $\tau\subset\sigma$, then $\tau\in K$. Especially, the empty set $\varnothing\in K$.
\end{enumerate}

An subset $\sigma\in K$ is called a \emph{simplex} (or \emph{face}) of $K$. A maximal simplex is also called a \emph{facet}.
The \emph{dimension} of a simplex $\sigma\in K$ is its cardinality minus one: dim$\,\sigma=|\sigma|-1$. The dimension of $K$ is the maximal dimension of its
simplices, denoted dim$\,K$. $K$ is said to be \emph{pure} if all its facets have the same dimension.

The \emph{geometrical realization} (also called \emph{polyhedron}) of a simplicial complex $K$ is the topological space $|K|$ defined as follows. Suppose the vertex set $S=\{v_1,\dots,v_m\}$. Let $e_i\in\RR^m$ be such that the $i$-th coordinate of $e_i$ is $1$ and all other coordinates of $e_i$ are $0$. $|K|$ is the union of all the convex hull of $\{e_{i_0},\dots,e_{i_s}\}$ such that $\{v_{i_0},\dots,v_{i_s}\}$ is a simplex of $K$.
To avoid ambiguity of notations, we use $\langle\sigma \rangle$ to denote the geometrical realization of a simplex $\sigma\in K$.

A \emph{simplicial subcomplex} $L$ of $K$ is a subset $L\subset K$ such that $L$ is also a simplicial complex.

For simplicial complexes $K_1$ and $K_2$ with vertex sets $S_1$ and $S_2$ resp., a \emph{simplicial map} $f\colon K_1\to K_2$ is a map $f\colon S_1\to S_2$ such that for all $\sigma\in K$, $f(\sigma)\in L$ (On the geometric level, every simplicial map extends linearly to a map $\phi:|K|\to|L|$).
$K_1$ is said to be \emph{simplicial isomorphic} to $K_2$ if there are simplicial maps $f\colon K_1\to K_2$ and $g\colon K_2\to K_1$ such that $gf=1_{K_1}$ and $fg=1_{K_2}$.

For two polyhedra $P_1$ and $P_2$, a \emph{$PL$ map} $\phi:P_1\to P_2$ is a map that is simplicial between some subdivisions of $P_1$ and $P_2$.
A \emph{PL homeomorphism} is a \emph{PL} map for which there exists a \emph{PL} inverse.
\end{definition}

\begin{definition}
For a simplex $\sigma$ of $K$, the \emph{link} and the \emph{star} of $\sigma$ are the simplicial subcomplexes
\begin{align*}
\mathrm{link}_K\sigma&=\{\tau\in K:\sigma\cup\tau\in K,\,\sigma\cap\tau=\emptyset\};\\
\mathrm{star}_K\sigma&=\{\tau\in K:\sigma\cup\tau\in K\}.
\end{align*}
\end{definition}

\begin{definition}
Let $\sigma\in K$ be a nonempty simplex of a simplicial complex $K$.
The \emph{stellar subdivision} of $K$ at $\sigma$ is obtained by replacing the star of $\sigma$ by the cone
over its boundary:
\[\mathrm{S}_\sigma K=(K\setminus \mathrm{star}_K\sigma)\cup \big(\mathrm{cone}(\partial\sigma*\mathrm{link}_K\sigma)\big).\]
The symbol $*$ denotes the join of two complex. If $\mathrm{dim}\sigma=0$ then $\mathrm{S}_\sigma K=K$. Otherwise the complex $\mathrm{S}_\sigma K$ acquires an additional vertex (the vertex of the cone). $K$ is called a \emph{stellar weld} of $\mathrm{S}_\sigma K$.

Two simplicial complexes $K,L$ are called \emph{stellar equivalent}, if there is a sequence of simplicial complexes $K=K_0,K_1,\dots,K_n=L$ such that $K_{i+1}$ is either a stellar subdivision or a stellar weld of $K_i$ for $0\leqslant i< n$.
\end{definition}

The stellar subdivision is one of the standard tools in the theory of simplicial complexes and has an old and rich tradition. Later on we need the following fundamental theorem.
\begin{theorem}\label{th0}
Let $K,L$ be two simplicial complexes. Then $|K|$ $PL$ homeomorphic to $|L|$ if and only if $K$ and $L$ are stellar equivalent (see \cite[Theorem 4.5]{L99}).
\end{theorem}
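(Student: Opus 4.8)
Theorem~\ref{th0} is classical --- it goes back to Newman and Alexander, and a complete modern proof is \cite[Theorem~4.5]{L99} --- so here I only indicate the line of argument one follows. For the ``if'' direction, suppose $L=\mathrm{S}_\sigma K$. By the definition of the stellar subdivision, $\mathrm{S}_\sigma K$ is obtained from $K$ by replacing $\mathrm{star}_K\sigma$ by a subdivision of it, so $\mathrm{S}_\sigma K$ is a subdivision of $K$ and $|\mathrm{S}_\sigma K|=|K|$; hence the identity map is simplicial from $\mathrm{S}_\sigma K$ to $K$ and is a $PL$ homeomorphism, and the same holds when $L$ is a stellar weld of $K$. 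Composing these identities along a sequence $K=K_0,\dots,K_n=L$ witnessing a stellar equivalence shows that $|K|$ is $PL$ homeomorphic to $|L|$.

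For the ``only if'' direction, let $\phi\colon|K|\to|L|$ be a $PL$ homeomorphism. By definition $\phi$ is simplicial with respect to some triangulations of $|K|$ and $|L|$, and by refining these triangulations we may assume they are subdivisions $K'$ of $K$ and $L'$ of $L$ (any two triangulations of a polyhedron have a common subdivision). Being bijective and a homeomorphism on realizations, $\phi\colon K'\to L'$ is then a simplicial isomorphism. Since a simplicial isomorphism counts as a (trivial) stellar equivalence, the theorem reduces to the assertion: \emph{every subdivision $K'$ of a simplicial complex $K$ is stellar equivalent to $K$} --- for then $K$ is stellar equivalent to $K'$, which is stellar equivalent (via $\phi$) to $L'$, which is stellar equivalent to $L$.

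I would prove this assertion by induction on $\dim K$, carried out relative to a fixed subcomplex, using two elementary facts. First, a first derived (barycentric) subdivision of $K$ is itself a stellar subdivision of $K$, obtained by starring the barycentres of the simplices of positive dimension in order of decreasing dimension; so $K$ --- and likewise any subdivision --- may be replaced freely by an iterated derived subdivision. Second, $\mathrm{S}_\sigma K$ alters $K$ only inside $\mathrm{star}_K\sigma$, so stellar moves may be localized; one also uses that two subdivisions of $K$ have a common subdivision. With these, one works down the skeleta of $K$: for a top-dimensional simplex $\sigma$, the restriction of $K'$ to $\langle\sigma\rangle$ is a subdivision of $\sigma$ whose trace on $\partial\sigma$ is brought into stellar agreement by the inductive hypothesis through moves supported near $\partial\sigma$; performing the matching moves on $K$ leaves two subdivisions of $\langle\sigma\rangle$ that agree on $\partial\sigma$, which one reconciles by a secondary induction on the number of interior simplices, using stellar subdivisions at interior vertices and their inverse welds. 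Doing this simultaneously for all top-dimensional simplices and then descending to the $(n-1)$-skeleton completes the induction.

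The main obstacle is precisely this relative bookkeeping: the stellar moves that rectify the subdivision of $\partial\sigma$ must be realizable over $\langle\sigma\rangle$ without disturbing the adjacent top-dimensional simplices of $K$, which forces one to set up the ``rel a subcomplex'' version of the assertion at the outset and to keep careful track of the order in which subdivisions and welds are applied. Since all of this is done in detail in \cite{L99}, in the paper we simply quote the result.
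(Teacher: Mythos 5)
The paper offers no proof of this theorem at all—it is quoted verbatim from \cite[Theorem 4.5]{L99}—and your proposal likewise ultimately defers to that reference, so the two "approaches" coincide. Your accompanying sketch of the classical Alexander--Newman argument (the easy direction via the fact that stellar subdivisions are subdivisions, and the hard direction via reduction to "every subdivision is stellar equivalent to the original," proved by a relative induction on skeleta using derived subdivisions) is an accurate outline of how the cited proof actually goes.
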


The following definition is a brief review of simplicial homology and cohomology theory. We always regard the simplicial chain complex $(C_*(K),d)$ and its dual cochain complex $(C^*(K),\delta)$ as the same chain group $C_*(K)=C^*(K)$ with different differentials $d$ and $\delta$. This trick is essential for the definition of (co)homeology spectral sequence.

\begin{definition}
Let $K$ be a simplicial complex with vertex set $\mathcal {S}$.  The $n$-th \emph{chain group} $C_n(K)=C^n(K)$ of $K$ is the free abelian group generated by all ordered sequence $(v_0,v_1\dots,v_{n})$ of elements of $\mathcal {S}$ with the following relations.
\begin{enumerate}[(1)]
\item $(v_0,v_1,\cdots,v_{n})=0$ if $v_i=v_j$ for some $i\neq j$ or $(v_0,v_1,\dots,v_{n})\not\in K$,
\item $(\dots,v_{i},\dots,v_{j},\dots)=-(\dots,v_{j},\dots,v_{i},\dots)$ for all $i<j$.
\end{enumerate}

$(v_0,v_1,\dots,v_{n})\neq 0$ is called a \emph{chain simplex} of dimension $n$. Denote a generator of $C_{-1}(K)=\mathbb{Z}$ by $(\varnothing)$.

Set $C_*(K)=C^*(K)=\bigoplus_{k\geqslant 0}C_k(K)$.
The \emph{simplicial chain complex} $(C_*(K),d)$ and \emph{simplicial cochain complex} $(C^*(K),\delta)$ of $K$ are defined as follows. For any chain $k$-simplex $(v_0,v_1,\dots,v_{k})$,
\begin{align*}
d(v_0,v_1,\dots,v_{k})&=\sum_{i=1}^k(-1)^{i}(v_0,v_1,\dots,\hat v_{i},\dots,v_{k});\\
\delta(v_0,v_1,\dots,v_{k})&=\sum_{v\in S}(v,v_0,\dots,v_{k}).
\end{align*}
($\hat v_i$ means canceling the symbol from the term). $H_*(K)=H_*(C_*(K),d)$ and $H^*(K)=H^*(C^*(K),\delta)$ are resp. the \emph{homology} and \emph{cohomology} groups of $K$.

Set $\w C_*(K)=\w C^*(K)=\bigoplus_{k\geqslant -1}C_k(K)$. The \emph{reduced simplicial chain complex} $(\w C_*(K),d)$ and \emph{reduced simplicial cochain complex} $(\w C^*(K),\delta)$ of $K$ are defined similarly. The only differences are $d(v)=(\varnothing)$ and $\delta(\varnothing)=\sum_{v\in S}(v)$. $\w H_*(K)=H_*(\w C_*(K),d)$ and $\w H^*(K)=H^*(\w C^*(K),\delta)$ are resp. the \emph{reduced homology} and \emph{reduced cohomology} groups of $K$.
\end{definition}

\begin{conv*}
A chain simplex corresponds to a unique simplex. If there is no confusion, we use the same greek letter $\sigma,\tau,\cdots$ to denote both the chain simplex and the simplex it corresponds to. For two chain simplexes $\sigma=(v_1,\dots,v_m)$ and $\tau=(w_1,\dots,w_n)$, denote $\sigma*\tau=(v_1,\dots,v_m,w_1,\dots,w_n)$.
\end{conv*}

\section{(Co)Homeology spectral sequences}\label{sec:2}
\begin{definition}
A \emph{bicomplex} (or \emph{double complex}) is an ordered triple $(M, d', d'')$,
where $M=(M_{p,\,q})$ is a bigraded module, $d',\, d'' : M\to M$ are differentials of
bidegree $(-1, 0)$ and $(0,1)$ resp. (so that $d'd'=0$ and $d''d''=0$), and
\[d_{p,\,q+1}'d_{p,\, q}''+d_{p-1,\,q}''d_{p,\, q}'=0.\]

If $M$ is a bicomplex, then its \emph{total complex}, denoted by $(T^*(M),D)$,
is the complex with $n$th term
\[T^n(M)=\underset{q-p=n}{\bigoplus} M_{p,\,q}\]
and with differentials $D^n:T^n(M)\to T^{n+1}(M)$ given by
\[D^n=\sum_{q-p=n}d'_{p,\,q}+d''_{p,\,q}.\]

\end{definition}

As we know for any bicomplex $(M_{p,\,q})$, there are two filtrations of $T^*(M)$. The first is given by
\[\mathrm{^I}F^p\,(T^*(M))=\underset{i\leqslant p}{\bigoplus}M_{i,\,*}.\]
The second is given by
\[\mathrm{^{II}}F^p\,(T^*(M))=\underset{j\geqslant p}{\bigoplus}M_{*,\,j}.\]
Either of these filtrations yields a spectral sequence.

For a simplicial complex $K$, $C_*(K)\otimes C^*(K)$ can be seen as a bicomplex. That is
\[T^n(C_*(K)\otimes C^*(K))=\bigoplus_{q-p=n}C_p(K)\otimes C^q(K),\]
and differential $\Delta: C^{p,\,q}\to C^{p-1,\,q}\oplus C^{p,\,q+1}$ (where $C^{p,\,q}=C_p(K)\otimes C^q(K)$) is defined by \[\Delta(\sigma\otimes\tau)=(d\sigma)\otimes\tau+(-1)^{|\sigma|}\sigma\otimes(\delta\tau).\]

\begin{cons}
Define the bicomplex $(N^{*,*}(K),\Delta)$
to be the subcomplex of \[(C_*(K)\otimes C^*(K),\Delta)\] generated by all $\sigma\otimes\tau$ such that $\sigma\subset\tau$.
Similarly, the bicomplex $(\w N^{*,*}(K),\Delta)$
is defined to be  the subcomplex of $(\w C_*(K)\otimes\w C^*(K),\Delta)$ generated by all $\sigma\otimes\tau$ such that $\sigma\subset\tau$.

Dually, define \[(N_{*,*}(K),D)=((N^{*,*}(K))^*,\Delta^*)=({\rm Hom}_{\Bbb Z}(T^{*,*}(K),\Bbb Z),\Delta^*).\] It is easy to see that the differential $D$ is given by
\begin{eqnarray*}&&D\big((v_0,\cdots,v_m)\otimes(v_0,\cdots,v_m,v_{m+1},\cdots,v_{m+n})\big)\\
&=&\sum_{j=1}^n(-1)^j(v_0,\cdots,v_m)\otimes(v_0,\cdots,v_m,v_{m+1},\cdots,\hat v_{m+j},\cdots,v_{m+n})\\
&&+\sum_{j=1}^{n}(v_{m+j},v_0,\cdots,v_m)\otimes(v_0,\cdots,v_m,v_{m+1},\cdots,v_{m+n}).\end{eqnarray*}
\end{cons}
\begin{prop}\label{prop:1}
For any simplicial complex $K$, we have
\[H^*\big(T^*(N(K)),\Delta\big)\cong H^*(K);\quad H_*\big(T_*(N(K)),D\big)\cong H_*(K),\]
and
\[\quad H^*\big(T^*(\w N(K)),\Delta\big)\cong \mathbb{Z};\quad \quad H_*\big(T_*(\w N(K)),D\big)\cong \mathbb{Z}.\]
\end{prop}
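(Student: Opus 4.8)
All four isomorphisms come in two dual pairs; the plan is to settle the two statements about $\Delta$ by means of the spectral sequence of the double complex and then to deduce the two statements about $D$ by duality. For the $\Delta$-statements I would work with the spectral sequence $\{\,{}_rE^{\,p,q},\,d_r\,\}$ of $(N^{*,*}(K),\Delta)$ --- respectively of $(\w N^{*,*}(K),\Delta)$ --- attached to the second filtration ${}^{\mathrm{II}}F$, that is, the one whose leading differential is the part $d'(\sigma\otimes\tau)=d\sigma\otimes\tau$ of $\Delta$. Finiteness of $K$ makes this filtration bounded, so convergence is automatic. On the associated graded $\Delta$ becomes $d'$, and for each fixed simplex $\tau$ the $d'$-stable summand $\{\sigma\otimes\tau:\sigma\subset\tau\}$ is precisely the simplicial chain complex $(C_*(\langle\tau\rangle),d)$ of the simplex $\langle\tau\rangle$ (tensored with $\tau$); in the reduced case it is the reduced chain complex $(\w C_*(\langle\tau\rangle),d)$, which is acyclic, the single exception being the column $\tau=\varnothing$ of $\w N^{*,*}(K)$, equal to $\mathbb{Z}\cdot(\varnothing\otimes\varnothing)$. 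Since a simplex is contractible, the ${}_1E$-page is thus concentrated on one (anti)diagonal: in the unreduced case ${}_1E\cong C^*(K)$, a generator over each $\tau$ being represented by the ``barycentric'' cocycle $z_\tau:=\big(\textstyle\sum_{v\in\tau}(v)\big)\otimes\tau$; in the reduced case ${}_1E\cong\mathbb{Z}$.

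The next step is to recognise $d_1$, which is induced by the other part $d''(\sigma\otimes\tau)=(-1)^{|\sigma|}\sigma\otimes\delta\tau$ of $\Delta$. One computes $d''(z_\tau)=-\big(\sum_{v\in\tau}(v)\big)\otimes\sum_w (w*\tau)$, the sum ranging over $w\notin\tau$ with $w*\tau\in K$; and in the column $w*\tau$ one has $z_{w*\tau}-\big(\sum_{v\in\tau}(v)\big)\otimes(w*\tau)=(w)\otimes(w*\tau)$, which is a $d'$-boundary, being the homology class of the \emph{first} vertex of the chain simplex $w*\tau$ (a class that vanishes whenever $\dim(w*\tau)\geqslant 1$, which holds throughout $N^{*,*}(K)$, because $d(u_0,u_1)=-(u_0)$). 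Hence $d_1[z_\tau]=-\sum_w[z_{w*\tau}]$, which under $[z_\tau]\leftrightarrow\tau$ is $-\delta\tau$; in the reduced case every target of $d''$ lands in an acyclic column, so $d_1=0$. Therefore ${}_2E\cong H^*(C^*(K),-\delta)=H^*(K)$, respectively $\mathbb{Z}$; as this page is again supported on one diagonal, all further differentials vanish, ${}_\infty E={}_2E$, and $H^*(T^*(N(K)),\Delta)\cong H^*(K)$ while $H^*(T^*(\w N(K)),\Delta)\cong\mathbb{Z}$.

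For the $D$-statements I would argue by duality. Since $(T_*(N(K)),D)=\mathrm{Hom}_{\mathbb{Z}}(T^*(N(K)),\mathbb{Z})$ with $D=\Delta^*$, and $T^*(N(K))$ is a cochain complex of free, finitely generated abelian groups, the universal coefficient theorem (for the $\mathbb{Z}$-dual of such a complex) furnishes a split short exact sequence
\[0\to\mathrm{Ext}\big(H^{n+1}(T^*(N(K)),\Delta),\mathbb{Z}\big)\to H_n(T_*(N(K)),D)\to\mathrm{Hom}\big(H^{n}(T^*(N(K)),\Delta),\mathbb{Z}\big)\to 0.\]
Feeding in the first part gives $H_n(T_*(N(K)),D)\cong\mathrm{Hom}(H^n(K),\mathbb{Z})\oplus\mathrm{Ext}(H^{n+1}(K),\mathbb{Z})$, and this is $H_n(K)$ by the universal coefficient theorem for $K$ (everything being finitely generated, the $\mathrm{Hom}$-summand recovers the free part of $H_n(K)$ and the $\mathrm{Ext}$-summand its torsion). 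The same reasoning with $H^*(K)$ replaced by $\mathbb{Z}$ in degree $0$ yields $H_*(T_*(\w N(K)),D)\cong\mathbb{Z}$. Alternatively, I could rerun the spectral sequence argument directly on $(N_{*,*}(K),D)$: filtering once more by the $\tau$-degree, the leading differential is the ``enlarge $\sigma$'' part $D''$ of $D$, each column $\{\sigma\otimes\tau:\sigma\subset\tau\}$ is then the simplicial \emph{cochain} complex of $\langle\tau\rangle$, once more acyclic away from degree $0$ (except $\tau=\varnothing$ in the reduced case), so ${}_1E$ sits on one diagonal and is $\cong C_*(K)$ with $d_1=\pm d$, and the sequence degenerates.

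The step I expect to be the real obstacle is the explicit identification of $d_1$ with $\pm\delta$ (respectively $\pm d$): one has to choose a natural representative --- the cocycle $z_\tau$ above --- for the generator of the homology of each simplex-column and then keep careful track of signs and of which basis elements are boundaries. Everything else rests on three standard facts: a simplex is contractible, a spectral sequence supported on a single diagonal degenerates, and the universal coefficient theorem.
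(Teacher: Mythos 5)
Your proposal is correct and follows essentially the same route as the paper: filter $T^*(N(K))$ by the $\tau$-degree so that each column is the (reduced) chain complex of a contractible simplex, identify $E_1$ with $C^*(K)$ concentrated on one diagonal (resp.\ $\mathbb{Z}$ in the reduced case), recognise $d_1$ as $\pm\delta$, and conclude by degeneration; the homology statements then follow by duality. The only difference is one of detail, not of method — you make explicit the representative $z_\tau$ and the identification of $d_1$, and you spell out the duality step via the universal coefficient theorem, both of which the paper dispatches with ``an easy observation'' and ``the others are dual.''
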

\begin{proof}
We only prove the cases for $N^{*,*}(K)$ and $\w N^{*,*}(K)$. The others are dual.

From the filtration $\mathrm{^{II}}F^p=\bigoplus_{i\geqslant p}N^{*,i}(K)$ of $T^*(N(K))$, we get a spectral sequence $(\mathrm{^{II}}E_r,\partial_r)_{r\geqslant 1}$ with \[\mathrm{^{II}}E_{\,1}^{p,\,q}=H^{p-q}(\mathrm{^{II}}F^p/\mathrm{^{II}}F^{p+1}),
\ \partial_r\colon \mathrm{^{II}}E_{\,r}^{p,\,q}\to \mathrm{^{II}}E_{\,r}^{p+r,\,q+r-1}.\] By definition,
\[\mathrm{^{II}}F^p/\mathrm{^{II}}F^{p+1}\cong \bigoplus_{|\sigma|=p+1}C_*(2^{\sigma}).\]
So
\[\mathrm{^{II}}E_{\,1}^{p,\,q}=H^{p-q}(\mathrm{^{II}}F^p/\mathrm{^{II}}F^{p+1})\cong \bigoplus_{|\sigma|=p+1}H_q(2^{\sigma}).\]
Hence $\mathrm{^{II}}E_{\,1}^{p,0}\cong C^p(K)$ and $\mathrm{^{II}}E_{\,1}^{p,\,q}=0$ if $q\neq0$. On the other hand, an easy observation shows that
\[(\mathrm{^{II}}E_{\,1}^{*,0},\,\partial_1)\cong (C^*(K),\,\delta).\]
Thus $\mathrm{^{II}}E_{\,2}^{p,0}\cong H^p(K)$, $\mathrm{^{II}}E_{\,2}^{p,\,q}=0$ if $q\neq0$, and so the spectral sequence collapses at $E_2$-term. So
\[H^p\big(T^*(N(K)),\Delta\big)\cong \mathrm{^{II}}E_{\,2}^{p,0}\cong H^p(K).\]

Similarly, from the filtration $\mathrm{^{II}}\w F^p=\bigoplus_{i\geqslant p}\w N^{*,i}(K)$ of $T^*(\w N(K))$,
we get a spectral sequence $(\mathrm{^{II}}\w E_r,\,\partial_r)_{r\geqslant 1}$ with
\[\mathrm{^{II}}\w E_{\,1}^{p,\,q}=H^{p-q}(\mathrm{^{II}}\w F^p/\mathrm{^{II}}\w F^{p+1})\cong \bigoplus_{|\sigma|=p+1}\w H_q(2^{\sigma}).\]
So $\w E_1^{-1,-1}\cong \mathbb{Z}$ and $\w E_1^{p,\,q}(K)=0$ otherwise. Thus the spectral sequence collapses at $E_1$-term. Then we have
\[H^{*}\big(T^*(\w N(K)),\Delta\big)\cong \mathrm{^{II}}\w E_{\,1}^{-1,-1}\cong \mathbb{Z}.\]
\end{proof}
\begin{definition}\label{def:0}
The \emph{cohomeology spectral sequence} of $K$
\[(E_r(K),\,\Delta_r)_{r\geqslant 1},\quad \Delta_r\colon E_{\,r}^{p,\,q}(K)\to E_{\,r}^{p-r,\,q-r+1}(K)\]
is yielded by the filtration
$F^{p}=\bigoplus_{i\leqslant p}\, N^{i,\,*}(K)$ of $T^*(N(K))$, where
\[E_{\,1}^{p,\,q}(K)=H^{q-p}(F^{p}/F^{p-1}).\]

Denote the $E_2$-term $E_2^{*,*}(K)$ by $\mathcal {H}^{*,*}(K)$, called the \emph{cohomeology groups} of $K$. For an abelian group $G$, define $ N^{*,*}(K;G)= N^{*,*}(K)\otimes G$. Then we get the cohomeology groups $\mathcal {H}^{*,*}(K;G)$ of $K$ with coefficients in $G$.

The \emph{reduced cohomeology spectral sequence} of $K$
\[(\w E_r(K),\,\Delta_r)_{r\geqslant 1},\quad \Delta_r\colon \w E_{\,r}^{p,\,q}(K)\to \w E_{\,r}^{p-r,\,q-r+1}(K)\]
is yielded by the filtration $\w F^{p}=\bigoplus_{i\leqslant p}\,\w N^{i,*}(K)$.
Denote $\w E_2^{*,*}(K)$ by $\w{\mathcal {H}}^{*,*}(K)$, called the \emph{reduced cohomeology groups} of $K$. Similarly we have the definition of
$\w{\mathcal {H}}^{*,*}(K;G)$.

The \emph{homeology spectral sequence} of $K$
\[(E^r(K),\,D_r)_{r\geqslant 1},\quad D_r\colon E^{\,r}_{p,\,q}(K)\to E^{\,r}_{p+r,\,q+r-1}(K)\]
is yielded by the filtration $F_{p}=\bigoplus_{i\geqslant p}\, N_{i,\,*}$, where
\[E^1_{p,\,q}(K)=H_{q-p}(F_{p}/F_{p+1}).\]
Denote $E^2_{*,*}(K)$ by $\mathcal {H}_{*,*}(K)$, and called the \emph{homeology groups} of $K$. Similarly we have the \emph{reduced homeology groups}
$\w{\mathcal {H}}_{*,*}(K)$ (and $\mathcal {H}_{*,*}(K;G)$, $\w{\mathcal {H}}_{*,*}(K;G)$).
\end{definition}

\begin{lemma}
Let $K$ be a simplicial complex. Suppose $\sigma\in K$, $v\in\sigma$, $\sigma'=\sigma\setminus\{v\}$. Then there is a cochain homomorphism
\[\phi_v\colon(\w C^*({\rm link}_K\sigma),\delta)\to(\w C^{*+1}({\rm link}_K\sigma'),\delta)\]
generated by $\phi_v(\tau)=\tau*(v)$ for all $\tau\in{\rm link}_K\sigma$.
\end{lemma}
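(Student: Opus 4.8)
The plan is to establish two things: that $\phi_v$ is a well-defined homomorphism of chain groups whose image lies in $\w C^{*+1}({\rm link}_K\sigma')$, and that it anticommutes, in fact strictly commutes, with the coboundary $\delta$. Since $\phi_v$ is only prescribed on the ordered-sequence generators by $\phi_v(w_0,\dots,w_n)=(w_0,\dots,w_n,v)$, the first task is to check it descends to the quotient defining $\w C^*$; the second is a direct computation.

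First I would handle well-definedness. Relation (2) of the chain group, antisymmetry under a transposition, is automatic: appending the symbol $(v)$ in the last position commutes with any permutation of the first $n+1$ positions, so $\phi_v$ carries a transposition to a transposition with the same sign. For relation (1), note that since $v\in\sigma$ while $\sigma\cap\tau=\varnothing$ for every $\tau\in{\rm link}_K\sigma$, the appended vertex $v$ never coincides with one of the $w_i$; hence $\tau*(v)=0$ whenever $\tau$ has a repeated entry, and no spurious cancellation is introduced. The remaining point, that a face $\tau\in{\rm link}_K\sigma$ goes into ${\rm link}_K\sigma'$ while a non-face goes to $0$, rests on the set identity $\sigma=\sigma'\sqcup\{v\}$: one has $\sigma'\cup(\tau\cup\{v\})=\sigma\cup\tau$, and $\sigma'\cap(\tau\cup\{v\})=\varnothing$ because $\sigma'\subset\sigma$ (so $\sigma'\cap\tau=\varnothing$) and $v\notin\sigma'$. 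Therefore
\[
\tau\in{\rm link}_K\sigma\iff\sigma\cup\tau\in K\iff\sigma'\cup(\tau\cup\{v\})\in K\iff\tau\cup\{v\}\in{\rm link}_K\sigma',
\]
which gives both implications simultaneously; in particular $\phi_v((\varnothing))=(v)$ lands correctly in $\w C^0({\rm link}_K\sigma')$, covering the reduced case.

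Then I would verify the cochain-map identity. Reading the coboundary as the sum over all ambient vertices $v'\in S$ of the operation ``prepend $v'$'' (summands whose underlying set fails to lie in the relevant link vanish, so this reading is consistent on both ${\rm link}_K\sigma$ and ${\rm link}_K\sigma'$), one computes for a generator $\tau$:
\[
\delta\big(\phi_v\tau\big)=\delta\big(\tau*(v)\big)=\sum_{v'\in S}(v')*\tau*(v)=\phi_v\Big(\sum_{v'\in S}(v')*\tau\Big)=\phi_v(\delta\tau),
\]
since prepending $v'$ and appending $v$ act on disjoint positions of the sequence and thus commute without sign. As $|\tau*(v)|=|\tau|+1$, this exhibits $\phi_v$ as a cochain homomorphism $\w C^*({\rm link}_K\sigma)\to\w C^{*+1}({\rm link}_K\sigma')$. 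The argument is essentially bookkeeping; the only subtle point — which I would be careful to state explicitly — is the two-directional equivalence displayed above, ensuring that ``$\tau*(v)=0$ in $\w C^*({\rm link}_K\sigma')$'' holds \emph{exactly} when ``$\tau=0$ in $\w C^*({\rm link}_K\sigma)$'', so that $\phi_v$ is genuinely defined on the chain \emph{groups} and not merely on the free groups on ordered sequences.
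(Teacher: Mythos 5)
Your argument is correct and is essentially the paper's own proof: the heart of both is the observation that after appending $(v)$, the coboundary terms coming from vertices outside the relevant link vanish by relation (1), so the two sums $\delta\phi_v(\tau)$ and $\phi_v\delta(\tau)$ agree term by term. The only difference is bookkeeping --- the paper sums over the actual vertex sets $\mathcal{S}\subset\mathcal{S}'$ of the two links and kills the extra terms $u\in\mathcal{S}'\setminus\mathcal{S}$, while you sum over the ambient vertex set on both sides; your explicit well-definedness check and the equivalence $\tau\in\mathrm{link}_K\sigma\iff\tau\cup\{v\}\in\mathrm{link}_K\sigma'$ just spell out what the paper leaves implicit.
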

\begin{proof}
We need only verify that $\phi_v$ is commutative with the differential $\delta$. Let $\mathcal {S}$ and $\mathcal {S}'$ are the vertex sets of
$\mathrm{link}_K\sigma$ and $\mathrm{link}_K\sigma'$ respectively. $\mathcal {S}\subset \mathcal {S}'$ is clear. By definition of $\phi_v$ and $\delta$
\[\delta\phi_v(\tau)=\sum_{u\in S'}(u)*\tau*(v),\]
and
\[\phi_v\delta(\tau)=\sum_{u\in S}(u)*\tau*(v).\]
If $u\in \mathcal {S}'$ but $u\not\in \mathcal {S}$, then $\{u\}\cup\sigma\not\in K$, and then $\{u\}\cup\{v\}\not\in\mathrm{link}_K\sigma'$,
so $(u)*\tau*(v)=0$ in $C^*(\mathrm{link}_K\sigma')$. The lemma follows immediately.
\end{proof}
Thus $\phi_v$ induces a homomorphism $\w H^*({\rm link}_K\sigma)\to \w H^{*+1}({\rm link}_K\sigma')$, which we also denote by $\phi_v$.
Denote by  $d_v\colon(C_*({\rm link}_K\sigma'),d)\to(C_{*-1}({\rm link}_K\sigma),d)$ the dual homomorphism of $\phi_v$.

\begin{lemma}\label{lem:1}
Let $K$ be a simplicial complex.  The cohomeology and homeology spectral sequences of $K$ satisfy that
\begin{align*}
E_1^{p,\,q}(K)&\cong\bigoplus_{|\sigma|=p+1}\,\w H^{q-p-1}({\rm link}_K\sigma)\\
E^1_{p,\,q}(K)&\cong\bigoplus_{|\sigma|=p+1}\,\w H_{q-p-1}({\rm link}_K\sigma)
\end{align*}
The differentials $\Delta_1$ and $D_1$ are defined as follows.

Given a cohomology class
$[c]\in\w H^{q-|\sigma|}({\rm link}_K\sigma)$, then $\Delta_1([c])=(-1)^q\cdot\sum_{v\in\sigma}[\phi_v(c)]$.
Given a homology class $[c]\in\w H_{q-|\sigma|}({\rm link}_K\sigma)$, then $D_1([c])=(-1)^q\cdot\sum_v[d_v(c)]$,
where the sum is taken over all vertices $v$ such that $\sigma\cup \{v\}\in K$.
\end{lemma}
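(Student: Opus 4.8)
The strategy is to make the filtration quotients completely explicit and then identify the induced differential. First I would analyze the quotient $F^p/F^{p-1}$ of $T^*(N(K))$ under the filtration $F^p=\bigoplus_{i\leqslant p}N^{i,*}(K)$. A generator $\sigma\otimes\tau$ with $\sigma\subset\tau$ lies in filtration degree $|\sigma|-1=p$ precisely when $|\sigma|=p+1$, so the quotient splits as a direct sum over the $p$-simplices $\sigma$. For a fixed such $\sigma$, the summand is generated by $\sigma\otimes\tau$ with $\sigma\subset\tau\in K$; writing $\tau=\sigma*\rho$ with $\rho\in{\rm link}_K\sigma$ identifies this summand (after the shift in degree) with $\w C^*({\rm link}_K\sigma)$ — here the reduced complex appears because $\rho=\varnothing$ is allowed, corresponding to $\tau=\sigma$. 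The surviving part of $\Delta$ in the quotient is the piece that does not lower $|\sigma|$, namely the $(-1)^{|\sigma|}\sigma\otimes\delta\tau$ term, which under the identification $\tau\leftrightarrow\rho$ becomes (up to the sign $(-1)^{p+1}$) the coboundary $\delta$ on $\w C^*({\rm link}_K\sigma)$; the point is that $\delta\tau=\sum_{u}(u)*\sigma*\rho$ and the terms where $u$ is already a vertex of $\sigma$ vanish, leaving exactly the sum over $u\in{\rm link}_K\sigma$ plus the contribution of $u=$ a vertex of $\sigma$, which we discard as it lies in lower filtration — wait, more carefully, those latter terms have the same $\sigma$ and hence stay in the same filtration degree, so one must check they are zero, which they are by antisymmetry since $u\in\sigma$. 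Taking homology of $(E_1^{p,*},\partial_1)$ then gives $E_1^{p,q}(K)\cong\bigoplus_{|\sigma|=p+1}\w H^{q-p-1}({\rm link}_K\sigma)$, with the degree bookkeeping $q-p$ being the total degree in $T^*$ and the internal degree of $\rho$ being $q-p-1=(q-|\sigma|)$.

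Next I would compute $\Delta_1$. By the standard construction of the spectral sequence of a filtered complex, $\Delta_1\colon E_1^{p,q}\to E_1^{p-1,q}$ is induced by the connecting map of the short exact sequence $0\to F^{p-1}/F^{p-2}\to F^p/F^{p-2}\to F^p/F^{p-1}\to 0$, i.e. lift a cocycle $c\in\w C^{q-|\sigma|}({\rm link}_K\sigma)$ to the chain $\sigma\otimes(\sigma*c)$ in $F^p$, apply the full differential $\Delta$, and read off the component in filtration degree $p-1$. That component comes from the $d\sigma\otimes\tau$ part of $\Delta$: we have $d\sigma=\sum_{v\in\sigma}(-1)^{?}\sigma_v$ where $\sigma_v=\sigma\setminus\{v\}$ in the chain-simplex sense, and each $\sigma_v\otimes(\sigma*c)=\sigma_v\otimes(\sigma_v*((v)*c))$ lands in the $\sigma_v$-summand of $F^{p-1}/F^{p-2}$, corresponding under the identification to $\phi_v(c)=c*(v)\in\w C^{q-|\sigma|+1}({\rm link}_K\sigma_v)$ — here one must be careful to reorder $(v)*c$ versus $c*(v)$ and track the resulting sign, and to combine it with the overall sign $(-1)^{|\sigma|}$ already present in the formula for $\Delta$ and the sign in the expansion of $d\sigma$; collecting everything yields the stated factor $(-1)^q$. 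The computation of $D_1$ is then obtained by dualizing: $N_{*,*}(K)$ is by construction the $\zz$-dual of $N^{*,*}(K)$, $d_v$ is the dual of $\phi_v$, and the filtration $F_p=\bigoplus_{i\geqslant p}N_{i,*}$ is the dual filtration, so the formula for $D_1$ follows formally, with the same sign $(-1)^q$ and the sum ranging over all $v$ with $\sigma\cup\{v\}\in K$, which is exactly the vertex set of ${\rm link}_K\sigma$ together with the condition $v\notin\sigma$ automatically built in.

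The routine parts are the bookkeeping of the direct-sum decomposition and the passage to (reduced) homology of the link, which are immediate once the identification $\tau=\sigma*\rho$ is set up. The main obstacle will be the sign analysis: correctly orienting the chain simplices $\sigma$, $\sigma_v$, $\sigma*c$, tracking the Koszul sign $(-1)^{|\sigma|}$ in $\Delta$, the sign $(-1)^i$ in the expansion of $d$, and the reordering sign when moving the new vertex $(v)$ past $c$, then verifying that all of these conspire to produce the single clean factor $(-1)^q$. I would handle this by fixing once and for all a convention (e.g. always write the deleted vertex last, so $\sigma=\sigma_v*(v)$ up to sign) and checking the signs on a low-dimensional example, e.g. $\sigma$ an edge, to pin down the normalization before writing the general argument.
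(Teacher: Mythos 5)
Your proposal follows essentially the same route as the paper's proof: identify $F^p/F^{p-1}$ with $\bigoplus_{|\sigma|=p+1}(N^{\sigma,*},1\otimes\delta)$, identify each summand with the shifted reduced cochain complex of $\mathrm{link}_K\sigma$ via $\sigma\otimes(\sigma*\rho)\mapsto\rho$, and then recognize $\Delta_1$ as induced by the $d\otimes 1$ part of $\Delta$, whose components are the maps $\phi_v$ up to the overall sign $(-1)^q$. The paper itself leaves the sign bookkeeping and the matching of the two coboundary sums as "easily verified," so your level of detail (including your correct observation that the extra terms of $\delta$ vanish by repeated vertices) is entirely adequate.
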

\begin{proof}
We only prove the cohomeology case. The homeology case is totally dual.

Note that $E_1^{p,\,q}(K)=H^{q-p}(F^{p}/F^{p-1})$. Apparently there are isomorphisms of chain complexes
\[(F^{p}/F^{p-1},\Delta)\cong (N^{p,*},1\otimes\delta),\]
 and
 \[(N^{p,*},1\otimes\delta)\cong  \bigoplus_{|\sigma|=p+1}(N^{\sigma,*},1\otimes\delta),\]
 where $N^{\sigma,*}$ is a subgroup of $N^{p,*}$ generated by $\sigma\otimes\tau$ with $\sigma\subset \tau$.
We can construct a isomorphism of chain complexes
\[(N^{\sigma,*},1\otimes\delta)\cong (C^{*-|\sigma|}(\mathrm{link}_K\sigma),\delta)\]
generated by
$\sigma\otimes(\sigma*\sigma')\mapsto\sigma'$. So the isomorphisms in the theorem hold.

On the other hand, notice that $\Delta_1:E_1^{p,\,q}(K)\to E_1^{p-1,\,q}(K)$
is just
\[d\otimes1: H^{q-p}(F^{p}/F^{p-1})\to H^{q-p+1}(F^{p-1}/F^{p-2}).\]
It is easily verified that
\[d\otimes1:H^{q-p}(N^{p,*},1\otimes\delta))\to H^{q-p+1}(N^{p-1,*},1\otimes\delta)\]
is equivalent to
\[\bigoplus_{|\sigma|=p+1}(-1)^q\cdot\sum_{v\in\sigma}\phi_v:\bigoplus_{|\sigma|=p+1}\,\w H^{q-|\sigma|}({\rm link}_K\sigma)\to \bigoplus_{|\tau|=p}\,\w H^{q-|\tau|}({\rm link}_K\tau)\]
Then the statement of lemma \ref{lem:1} follows.
\end{proof}
\begin{theorem}\label{thm:3}
Let $K$ be a simplicial complex, $K'$ be the stellar subdivision of $K$ at a simplex of dimension greater than zero.  Then for any abelian group $G$
and all $p,q\in \mathbb{Z}$,
\begin{eqnarray*}
&\mathcal {H}^{p,\,q}(K';G)\cong\mathcal {H}^{p,\,q}(K;G),\quad \w {\mathcal {H}}^{p,\,q}(K';G)\cong\w {\mathcal {H}}^{p,\,q}(K;G),\\
&\mathcal {H}_{p,\,q}(K';G)\cong\mathcal {H}_{p,\,q}(K;G),\quad \w {\mathcal {H}}_{p,\,q}(K';G)\cong\w {\mathcal {H}}_{p,\,q}(K;G).
\end{eqnarray*}
\end{theorem}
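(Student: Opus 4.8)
The plan is to combine Lemma~\ref{lem:1} with a single structural device. Write $\sigma$ for the simplex at which we subdivide ($\dim\sigma=k\ge1$), $w$ for the new vertex of $K'=\mathrm{S}_\sigma K$, and $L=\mathrm{link}_K\sigma$. For any subcomplex $J\subseteq K$ let $N(K,J)\subseteq N(K)$ be the subgroup spanned by all $\sigma'\otimes\tau'$ with $\tau'\notin J$. Because $J$ is downward closed, adjoining a vertex to a simplex outside $J$ cannot produce a simplex in $J$; hence $N(K,J)$ is a sub-bicomplex of $N(K)$, and the quotient bicomplex $N(K)/N(K,J)$ is canonically identified with $N(J)$. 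The short exact sequence $0\to N(K,J)\to N(K)\to N(J)\to 0$ respects the filtration $F^{p}$ of $T^{*}(N(K))$ and is split in each filtration degree, so it yields a long exact sequence relating the cohomeology spectral sequences, and in particular a long exact sequence
\[\cdots\to\mathcal H^{p,q}(N(K,J))\to\mathcal H^{p,q}(K)\to\mathcal H^{p,q}(J)\xrightarrow{\ \partial\ }\mathcal H^{p,q+1}(N(K,J))\to\cdots,\]
together with the analogous sequences for the reduced groups, for the homeology groups (by dualizing), and, after tensoring the free complexes with $G$, for all coefficient groups.

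I would apply this with $J=B:=\mathrm{star}_K\sigma=2^{\sigma}*L$ inside $K$ and with $J=B':=\mathrm{star}_{K'}w=\mathrm{cone}_w(\partial\sigma*L)$ inside $K'$. The crucial point is that $N(K,B)$ and $N(K',B')$ are \emph{the same} bicomplex: the simplices of $K$ not in $B$ are exactly those $\tau'$ with $\sigma\cup\tau'\notin K$, and these are precisely the simplices of $K'$ not in $B'$; moreover on such a $\tau'$ the differentials agree, since $w$ is never adjoinable (that would force $\tau'\in B'$) and adjoining an old vertex to $\tau'$ produces a simplex of $K$ iff it produces one of $K'$, in either case one still not containing $\sigma$. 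Hence $\mathcal H^{p,q}(N(K,B))\cong\mathcal H^{p,q}(N(K',B'))$ in all variants. On the other side, $B'=\mathrm{S}_\sigma B$, and $B$ has strictly fewer simplices than $K$ whenever $B\ne K$; so $\mathcal H^{p,q}(B)\cong\mathcal H^{p,q}(B')$ by induction on the number of simplices. Comparing the two long exact sequences and checking compatibility with the connecting maps, the five lemma gives $\mathcal H^{p,q}(K)\cong\mathcal H^{p,q}(K')$, and likewise in every variant.

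This leaves the base case $B=K$, i.e.\ $K=2^{\sigma}*L$ and $K'=\mathrm{cone}_w(\partial\sigma*L)$. Here I would compute both sides directly from Lemma~\ref{lem:1}: in a cone $\mathrm{cone}_vX$ the link of any simplex not containing $v$ is again a cone, hence acyclic, while $\mathrm{link}_{\mathrm{cone}_vX}(v*\rho)=\mathrm{link}_X\rho$; this gives $\mathcal H^{p,q}(\mathrm{cone}_vX)\cong\w{\mathcal H}^{p-1,q-1}(X)$, and the same for reduced groups. Iterating over the $k+1$ vertices of $\sigma$, $\mathcal H^{p,q}(2^{\sigma}*L)\cong\w{\mathcal H}^{\,p-k-1,q-k-1}(L)$, whereas a single application gives $\mathcal H^{p,q}(\mathrm{cone}_w(\partial\sigma*L))\cong\w{\mathcal H}^{\,p-1,q-1}(\partial\sigma*L)$. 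It therefore suffices to prove $\w{\mathcal H}^{a,b}(\partial\sigma*L)\cong\w{\mathcal H}^{\,a-k,b-k}(L)$, which I would do by induction on $k$: the closed star in $\partial\sigma*L$ of a vertex of $\sigma$ is a cone whose complement's bicomplex is the one attached to the pair $(2^{\tau}*L,\partial\tau*L)$ for a facet $\tau$ of $\sigma$, so the same short exact sequence, the cone formula, and the case $k-1$ close the induction (with $k=1$ amounting to $\w{\mathcal H}(\Sigma L)\cong\w{\mathcal H}^{1,1}(L)$, proved the same way or directly from Lemma~\ref{lem:1}).

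The step I expect to be the main obstacle is the homological bookkeeping around the long exact sequence: passing from the short exact sequence of bicomplexes to an honest long exact sequence of $E_2$-terms $\mathcal H^{*,*}$ is not automatic for a general double complex, so one must run the exact-couple argument carefully (or exploit that the second spectral sequence here degenerates, as in Proposition~\ref{prop:1}), and one must check that the isomorphisms produced along the induction are genuinely compatible with the connecting homomorphisms so that the five lemma applies. The remaining work — establishing the cone and join-with-a-boundary-sphere formulas with the correct signs coming from $\Delta_1=(-1)^q\sum_v\phi_v$, and verifying that $N(K,B)$ and $N(K',B')$ coincide literally — is routine but must be carried out attentively.
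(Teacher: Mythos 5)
Your strategy is genuinely different from the paper's: you try to reduce $K$ versus $K'=\mathrm{S}_\sigma K$ to the pair of stars via excision-type long exact sequences and induction on the number of simplices, whereas the paper constructs an intermediate bicomplex $\mathcal N^{*,*}\subset C_*(K')\otimes C^*(K)$ receiving explicit maps $\varphi$ from $N^{*,*}(K')$ and $\psi$ from $N^{*,*}(K)$, and shows both induce isomorphisms on $E_2$. Several of your individual observations are correct: $N(K,J)$ is a sub-bicomplex with quotient $N(J)$; the relative bicomplexes $N(K,\mathrm{star}_K\sigma)$ and $N(K',\mathrm{star}_{K'}w)$ are literally equal; and your base case could even be shortened, since $\widetilde{\mathcal H}^{*,*}(\partial\Delta^k)=\mathbb Z$ concentrated at $(k-1,k-1)$, so Theorem \ref{thm:5}(b) applies directly to both $2^\sigma*L$ and $\mathrm{cone}_w(\partial\sigma*L)$. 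But the inductive step has a gap that, as written, the argument cannot get around: the five lemma requires a \emph{map} between the two long exact sequences, and there is no homomorphism $\mathcal H^{p,q}(K)\to\mathcal H^{p,q}(K')$ (nor the reverse) to put in the middle column --- constructing such a comparison map is precisely the crux of the theorem, and it is exactly what the paper's $\mathcal N^{*,*}$ with $\varphi$ and $\psi$ supplies. Knowing only that the outer four terms of two exact sequences are abstractly isomorphic compatibly with the connecting maps pins down $\mathcal H^{p,q}(K)$ and $\mathcal H^{p,q}(K')$ as extensions of isomorphic groups by isomorphic groups, which over $\mathbb Z$ does not determine the middle group (compare $0\to\mathbb Z\xrightarrow{2}\mathbb Z\to\mathbb Z/2\to0$ with $0\to\mathbb Z\to\mathbb Z\oplus\mathbb Z/2\to\mathbb Z/2\to0$). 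Since the theorem is asserted for arbitrary coefficients $G$, you cannot dodge this by passing to a field.

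The secondary issue you yourself flag is also real and left unresolved: a short exact sequence of bicomplexes, even one that is bigraded-split, yields a long exact sequence only at the $E_1$ level, not at $E_2=\mathcal H^{*,*}$. To get your long exact sequence of cohomeology groups you must show that the $E_1$-level sequence breaks into short exact sequences of $(E_1^{*,q},\Delta_1)$-complexes (this is exactly how the paper proceeds with $\psi_1$). For $J=\mathrm{star}_K\sigma$ this can be rescued: by Lemma \ref{lem:1}, $E_1^{p,q}(K)\to E_1^{p,q}(J)$ is summandwise the map $\widetilde H^{q-p-1}(\mathrm{link}_K\rho)\to\widetilde H^{q-p-1}(\mathrm{link}_J\rho)$, which is an isomorphism when $\rho\supseteq\sigma$ and trivially surjective otherwise because $\mathrm{link}_J\rho$ is then a cone. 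So that check is doable but must actually be done, for every pair you use. Neither repair addresses the missing middle map, so the proposal does not yet constitute a proof; to salvage it along these lines you would need to produce a zig-zag of bicomplex maps connecting $N^{*,*}(K)$ and $N^{*,*}(K')$ compatible with your filtrations by $N(K,B)$ and $N(K',B')$ --- at which point you have essentially rediscovered the paper's intermediate complex.
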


\begin{proof}
We only prove the unreduced cohomeology case. Other cases are similar. For convention we ignore the coefficient group $G$.

Define $(\mathcal {N}^{*,*},\Delta)$ to be the double subcomplex of $(C_*(K'),d)\otimes(C^*(K),\delta)$ generated by all $\sigma'\otimes\sigma$ such that $\sigma'\in K'$, $\sigma\in K$ and $\langle\sigma' \rangle\subset\langle\sigma \rangle $.
From the definition of stellar subdivision, for each $\sigma'\in K'$, there exist one and only one simplex $f(\sigma')\in K$ such that $\overset{\comp }{\langle\sigma' \rangle}\subset\overset{\comp }{\langle f(\sigma') \rangle}$, where
$\overset{\comp }{\langle\cdot \rangle}$ denote the interior of the geometric realization of a simplex.

Define a group homomorphism
\[\varphi: N^{*,*}(K')\to \mathcal {N}^{*,*}\]
generated by
\[
\varphi(\sigma'\otimes\tau')=
\begin{cases}
\pm\sigma'\otimes f(\tau')&\ \text{ if }  \mathrm{dim}\,\tau'=\mathrm{dim}\,f(\tau'),\\
0&\ \text{ otherwise.}
\end{cases}
\]
The sign in the above formula depends on whether $\langle\tau' \rangle$ has the same orientation as $\langle f(\tau') \rangle$. It is easy to check that $\varphi$ is a
chain homomorphism.
The filtration $\mathcal {F}^{p}=\oplus_{i\leqslant p}\,\mathcal {N}^{i,*}$ of $T^*(\mathcal {N})$ also yields a spectral sequence
$(\mathcal {E}_r,\,\Delta_r)_{r\geqslant 1}$, and so $\varphi$ induces a spectral sequence homomorphism
$\varphi_r\colon E_{\,r}^{p,\,q}(K')\to \mathcal {E}_{\,r}^{p,\,q}$, $r\geqslant 1$. Consider the following commutative diagram.
\[
\begin{CD}
E_{\,1}^{p,\,q}(K')@>\varphi_1>>\mathcal {E}_{\,1}^{p,\,q}\\
@V\cong V\xi V @V\cong V\zeta V\\
\underset{\substack{\sigma'\in K'\\|\sigma'|=p+1}}{\bigoplus}\w H^{q-p-1}({\rm link}_{K'}\sigma')@>
\phi_1>>\underset{\substack{\sigma'\in K'\\|\sigma'|=p+1}}{\bigoplus}H^q\big(C^*_{\sigma'}(K)\big)
\end{CD}\]
where $\phi_1=\zeta\varphi_1\xi^{-1}$, $C^*_{\sigma'}(K)$ is the subcomplex of $C^*(K)$ generated by
\[\Gamma_{\sigma'}=\{\sigma\in K: \langle\sigma' \rangle\subset\langle\sigma \rangle\} \]
For each $\sigma\in \Gamma_{\sigma'}$, denote $e_{\sigma'}(\sigma)=|{\rm link}_{K'}\sigma'|\cap \langle\sigma \rangle$.
Then it is easily verified that
\[
e_{\sigma'}(\sigma)\cong
\begin{cases}
S^{|\sigma|-|\sigma'|-1}&\ \text{ if }\ \overset{\comp }{\langle\sigma' \rangle}\subset\overset{\comp }{\langle\sigma\rangle},\\ D^{|\sigma|-|\sigma'|-1}&\ \text{ otherwise. }
\end{cases}
\]
Thus ${\rm link}_{K'}\sigma'$ can be viewed as a CW complex $X_{\sigma'}$ with $e_{\sigma'}(\sigma)$ as cells
(if $\overset{\comp }{\langle\sigma' \rangle}\subset\overset{\comp }{\langle\sigma\rangle}$, take a vertex $v\in e_{\sigma'}(\sigma)$ as an additioanl $0$-cell).
Apparently, there is an isomorphism
\[\eta: H^q\big(C^*_{\sigma'}(K)\big)\cong \w H^{q-|\sigma'|}(X_{\sigma'})\] induced by the cochain homomorphism generated by $\sigma\mapsto e_{\sigma'}(\sigma)$. It is easy to see that $\eta\circ h$ is the canonical isomorphism $\w H^*({\rm link}_{K'}\sigma')\cong \w H^*(X_{\sigma'})$. So $\varphi_1$ is an isomorphism and inductively,
$\varphi_r: E_{\,r}^{p,\,q}(K')\to\mathcal {E}_{\,r}^{p,\,q}$ are isomorphisms for $r\geqslant 1$.

Define a monomorphism of double complexes
\[\psi\colon (N^{*,*}(K),\Delta)\to (\mathcal {N}^{*,*},\Delta)\] generated by
$\psi(\sigma\otimes\tau)=\sum_{\sigma'}\sigma'\otimes\tau$, the sum taken over all oriented simplex $\sigma'\in K'$ such that $f(\sigma')=\sigma$, $|\sigma'|=|\sigma|$
and $\langle\sigma' \rangle$ has the same orientation as $\langle \sigma \rangle$. $\psi$ induces a spectral sequence homomorphism
$\psi_r\colon E_{\,r}^{p,\,q}(K)\to \mathcal {E}_{\,r}^{p,\,q}$, $r\geqslant 1$.

Denote the quotient double complex of $\psi$ by
$(\bar N^{*,*},\Delta)$.
The filtration $\bar F^{p}=\bigoplus_{i\leqslant p}\,\bar N^{i,\,*}$ of $T^*(\bar N)$ yields a spectral sequence
$(\bar E_r,\,\Delta_r)_{r\geqslant 1}$. There are long exact sequences for any fixed $p$,
\[\cdots\to E_{\,1}^{p,\,q}(K)\xrightarrow{\psi_1} \mathcal {E}_{\,1}^{p,\,q}\to \bar E_{\, 1}^{p,\,q}\to E_{\,1}^{p,\,q+1}(K)\to \cdots\]
associated to the short exact sequences of chain complexes
\[ 0\to F^p/F^{p-1}\xrightarrow{\psi}\mathcal {F}^p/\mathcal {F}^{p-1}\to \bar F^p/\bar F^{p-1}\to 0.\]
For each $\sigma\in K$, if $f(\sigma')=\sigma$, then there is apparently a cochain isomorphism
\[(N^{\sigma,*}(K),\,\delta\otimes1) \cong (C^*_{\sigma'}(K),\,\delta),\]
which induces a cohomology isomorphism
\[\lambda_{\sigma'}^\sigma: H^*\big(N^{\sigma,*}(K),\,\delta\otimes1\big)\to H^*\big(C^*_{\sigma'}(K),\,\delta\big).\]
By the definition of $\psi$, for a cohomology class $[x]\in H^*(N^{\sigma,*}(K),\,\delta\otimes1)$,
\[\psi_1([x])=\sum_{f(\sigma')=\sigma,\,|\sigma'|=|\sigma|}\lambda_{\sigma'}^\sigma([x]).\]
So $\psi_1$ is injective, and the long exact sequences above break up into short exact sequences
\[0\to E_{\,1}^{p,\,q}(K)\xrightarrow{\psi_1} \mathcal {E}_{\,1}^{p,\,q}\to \bar E_{\, 1}^{p,\,q}\to 0.\]
Now Fix $q$, then we get a short exact sequence of chain complexes
\[0\to (E_{\,1}^{*,\,q}(K),\Delta_1)\xrightarrow{\psi_1} (\mathcal {E}_{\,1}^{*,\,q},\Delta_1)\to (\bar E_{\, 1}^{*,\,q},\Delta_1)\to 0.\]
This short exact sequence of chain complexes yields a long exact sequence
\[\cdots\to \mathcal {H}^{p,\,q}(K)\xrightarrow{\psi_2} \mathcal {E}_{\,2}^{p,\,q}\to \bar E_{\, 2}^{p,\,q}\to \mathcal {H}^{p-1,\,q}(K)\to \cdots\]
If we prove that $\bar E_{2}^{p,\,q}=0$ for all $p,q\in \mathbb{Z}$, then $\psi_2$ is an isomorphism, and therefore,
\[\varphi_2^{-1}\psi_2\colon \mathcal {H}^{*,*}(K)\to \mathcal {H}^{*,*}(K')\] is an isomorphism.

Now we prove $\bar E_{2}^{*,*}=0$. Let $K^{(n)}$ be the $n$-skeleton of $K$. Fix $q$, define a filtration
$(F_q^p)_{p\in \mathbb{Z}}$ of $(E_{1}^{*,\,q}(K),\,\Delta_1)$ by
\[F_q^p=\bigoplus_{i\leqslant p}E_{1}^{i,\,q}(K).\]
Meanwhile, define a filtration $(\mathcal {F}_q^p)_{p\in \mathbb{Z}}$ of $\mathcal {E}_{1}^{*,\,q}\cong \bigoplus_{\sigma'\in K'}H^q\big(C^*_{\sigma'}(K)\big)$ by
\[\mathcal {F}_q^p=\bigoplus_{\sigma'\in K',\langle\sigma' \rangle\subset |K^{(p)}|}H^q\big(C^*_{\sigma'}(K)\big).\]
Let $(\bar F_q^p)_{p\in \mathbb{Z}}$ be the quotient filtration of $\varphi_1$ on $\bar E_{\, 1}^{*,\,q}$, i.e.,
\[\bar F_q^p=\mathcal {F}_q^p/\psi_1(F_q^p).\]
Clearly, the spectral sequence $\{_q\bar E^{r}\}_{r\geqslant 1}$ (where $_q\bar E^{1}_{s,t}=H_{s+t}(\bar F_q^p/\bar F_q^{p-1})$) yielded by $(\bar F_q^p)_{p\in \mathbb{Z}}$ converges to
\[H_*(\bar E_{1}^{*,\,q},\Delta_1)\cong \bar E_{2}^{*,q}.\]

By the definition of $F_q^p$,
\[
(F_q^p/F_q^{p-1})_n=
\begin{cases}
E_{1}^{p,\,q}(K)\ &\text{ if } n=p,\\
0\ &\text{ otherwise.}
\end{cases}
\]
So \[H_*(F_q^p/F_q^{p-1})=H_p(F_q^p/F_q^{p-1})\cong E_{1}^{p,\,q}(K)\cong \underset{\substack{\sigma\in K,\\|\sigma|=p+1}}{\bigoplus}\w H^{q-p-1}(\mathrm{link}_K\sigma),\]
If $\overset{\comp }{\langle\sigma' \rangle}\subset\overset{\comp }{\langle\sigma\rangle}$ for $\sigma'\in K'$, $\sigma\in K$, then it is easily verified that
\[H^q\big(C^*_{\sigma'}(K)\big)\cong \w H^{q-|\sigma|}(\mathrm{link}_K\sigma).\]
Therefore we have
\[(\mathcal {F}_q^p/\mathcal {F}_q^{p-1})_{n}=\underset{\substack{\sigma\in K,\\|\sigma|=p+1}}{\bigoplus}C_n(K'_\sigma,\partial K'_\sigma)\otimes \w H^{q-p-1}(\mathrm{link}_K\sigma),\]
where $K'_\sigma$ is a subcomplex of $K'$ such that $|K'_\sigma|=\langle\sigma \rangle\cong D^{|\sigma|-1}$.
An easy verification shows that
\[H_*(\mathcal {F}_q^p/\mathcal {F}_q^{p-1})\cong \underset{\substack{\sigma\in K,\\|\sigma|=p+1}}{\bigoplus}H_*\big(C_*(K'_\sigma,\partial K'_\sigma)\big)\otimes\w H^{q-p-1}(\mathrm{link}_K\sigma).\]
Since $H_*\big(C_*(K'_\sigma,\partial K'_\sigma)\big)\cong H_{|\sigma|-1}(D^{|\sigma|-1},S^{|\sigma|-2})\cong G$, then from the the definition of $\psi_1$, we have that
\[(\psi_1)_*:H_*(F_q^p/F_q^{p-1})\xrightarrow{\cong}H_*(\mathcal {F}_q^p/\mathcal {F}_q^{p-1}).\]
Consider the long exact sequence
\[\cdots\to H_n(F_q^p/F_q^{p-1})\xrightarrow{(\psi_1)_*}H_n(\mathcal {F}_q^p/\mathcal {F}_q^{p-1})\to {_q\bar E}_{p,\,n-p}^1\to H_{n-1}(F_q^p/F_q^{p-1})\to\cdots.\]
We have that $_q\bar E^1=0$, and so $\bar E_{2}^{*,q}=0$. The proof is completed.
\end{proof}
Combine the results of Theorem \ref{th0} and Theorem \ref{thm:3}, we get the following corollary immediately.
\begin{cor}\label{cor:1}
$\mathcal {H}^{*,*}$, $\w {\mathcal {H}}^{*,*}$, $\mathcal {H}_{*,*}$, $\w {\mathcal {H}}_{*,*}$
 are all $PL$ homeomorphism invariants of polyhedra.
\end{cor}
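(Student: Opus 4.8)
The plan is to deduce the corollary directly from Theorem \ref{th0} together with Theorem \ref{thm:3}. By Theorem \ref{th0}, if $|K|$ is $PL$ homeomorphic to $|L|$, then there is a finite sequence of simplicial complexes $K=K_0,K_1,\dots,K_n=L$ in which each $K_{i+1}$ is obtained from $K_i$ by a stellar subdivision or a stellar weld. It therefore suffices to show that each of the four assignments $\mathcal{H}^{*,*}$, $\w{\mathcal{H}}^{*,*}$, $\mathcal{H}_{*,*}$, $\w{\mathcal{H}}_{*,*}$ takes isomorphic values on any two complexes related by a single such move, and then to compose the resulting isomorphisms along the chain.

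First I would dispose of the degenerate cases. If the stellar subdivision is performed at a $0$-dimensional simplex $\sigma$, then by definition $\mathrm{S}_\sigma K=K$, so there is nothing to prove; the same applies to the corresponding weld. For a stellar subdivision $K'=\mathrm{S}_\sigma K$ at a simplex $\sigma$ with $\dim\sigma>0$, Theorem \ref{thm:3} (say with coefficient group $G=\mathbb{Z}$, or with arbitrary $G$) provides isomorphisms $\mathcal{H}^{p,q}(K')\cong\mathcal{H}^{p,q}(K)$ for all $p,q$, and likewise for the reduced cohomeology groups and for the two homeology variants. A stellar weld is by definition the inverse operation: if $K_{i+1}$ is a stellar weld of $K_i$, then $K_i=\mathrm{S}_\sigma K_{i+1}$ for some simplex $\sigma$, and again either $\dim\sigma=0$ (so $K_i=K_{i+1}$) or Theorem \ref{thm:3} applies and we simply invert the isomorphism it furnishes.

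Chaining these isomorphisms over $i=0,\dots,n-1$ then yields isomorphisms $\mathcal{H}^{p,q}(K)\cong\mathcal{H}^{p,q}(L)$, and similarly for $\w{\mathcal{H}}^{*,*}$, $\mathcal{H}_{*,*}$, $\w{\mathcal{H}}_{*,*}$, whenever $|K|$ and $|L|$ are $PL$ homeomorphic. Since a polyhedron determines its underlying simplicial complex precisely up to stellar equivalence, this is exactly the assertion that these four bigraded groups are $PL$ homeomorphism invariants of polyhedra.

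There is no real obstacle here, as the entire substantive content has already been isolated in Theorem \ref{thm:3}; the corollary is a formal consequence. The only points deserving a word of care are (i) treating the degenerate $0$-dimensional stellar subdivisions separately, since Theorem \ref{thm:3} is stated only for simplices of positive dimension, and (ii) noting that the stellar-weld direction is handled by inverting the isomorphism of Theorem \ref{thm:3} rather than requiring a separate argument. Everything else is the routine bookkeeping of composing finitely many isomorphisms along the stellar-equivalence sequence.
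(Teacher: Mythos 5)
Your proposal is correct and is exactly the paper's argument: the corollary is stated as an immediate consequence of combining Theorem \ref{th0} with Theorem \ref{thm:3}, and your extra care about the degenerate $0$-dimensional case and the weld direction is just a more explicit spelling-out of that same deduction.
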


\begin{theorem}\label{th5}
Let $K$ be an $n$-dimensional simplicial complex satisfies that for any simplex $\sigma\in K$ and $\sigma\neq\varnothing$, $\w H^i({\rm link}_K\sigma)=0$ whenever $i\neq n-|\sigma|$. Then
\[\mathcal {H}^{p,\,q}(K;G),\, \mathcal {H}_{p,\,n}(K;G)=0\ \text{ for } q<n,\] and $\mathcal {H}^{p,\,n}(K;G)$ (resp. $\mathcal {H}_{p,\,n}(K;G)$) is
isomorphic to $H^{n-p}(K;G)$ (resp. $H_{n-p}(K;G)$).
\end{theorem}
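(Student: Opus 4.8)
The plan is to show that, under the stated hypothesis, both the cohomeology and the homeology spectral sequences of $K$ (with arbitrary coefficients $G$) have their $E_1$-pages concentrated on the single line $q=n$; once this is established the theorem drops out formally from Lemma \ref{lem:1}, boundedness of the defining filtrations, and Proposition \ref{prop:1}.

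First I would reduce the hypothesis to a statement about the links with arbitrary coefficients. Fix a non-empty simplex $\sigma\in K$, put $L=\mathrm{link}_K\sigma$ and $m=n-|\sigma|$, so $\dim L\leqslant m$. Running the hypothesis $\w H^i(L;\zz)=0$ for $i\neq m$ through the universal coefficient theorem shows two things: $\w H^{m+1}(L;\zz)=0$ forces $\mathrm{Ext}(\w H_m(L;\zz),\zz)=0$, i.e. $\w H_m(L;\zz)$ is free; and $\w H^i(L;\zz)=0$ for $i\leqslant m-1$ forces $\w H_i(L;\zz)=0$ for $i\leqslant m-1$. Hence $\w H_i(L;\zz)=0$ for $i\neq m$ with $\w H_m(L;\zz)$ free (and therefore $\w H^m(L;\zz)$ free as well), so all $\mathrm{Tor}$-terms vanish and $\w H_i(L;G)=\w H^i(L;G)=0$ for every abelian group $G$ and every $i\neq m$.

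Next I would invoke Lemma \ref{lem:1} in its evident coefficient version: $E_1^{p,q}(K;G)\cong\bigoplus_{|\sigma|=p+1}\w H^{q-p-1}(\mathrm{link}_K\sigma;G)$ and $E^1_{p,q}(K;G)\cong\bigoplus_{|\sigma|=p+1}\w H_{q-p-1}(\mathrm{link}_K\sigma;G)$. For $|\sigma|=p+1$ the reduced (co)homology of $\mathrm{link}_K\sigma$ in degree $q-p-1$ vanishes unless $q-p-1=n-|\sigma|=n-p-1$, i.e. $q=n$, by the previous paragraph. Hence $E_1^{p,q}(K;G)=E^1_{p,q}(K;G)=0$ for all $q\neq n$. (Equivalently, one may first do this over $\zz$, note that $E_1^{p,n}(K;\zz)$ is free, and then pass to $G$, which also gives $E_1^{p,q}(K;G)\cong E_1^{p,q}(K;\zz)\otimes G$.)

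Finally the degeneration argument. Because the $E_1$-pages are concentrated on the row $q=n$, all higher differentials vanish (for $r\geqslant 2$ a differential would connect the row $q=n$ to a row $q\neq n$), so $E_2=E_\infty$; in particular $\mathcal H^{p,q}(K;G)$ and $\mathcal H_{p,q}(K;G)$ vanish for $q\neq n$, which is the first assertion. Since $K$ is finite the defining filtrations are bounded, so the cohomeology spectral sequence converges to $H^*$ of its total complex, which by (the coefficient version of) Proposition \ref{prop:1} is $H^*(K;G)$; as everything survives only on $q=n$, the induced filtration on $H^{n-p}(K;G)$ has a single nonzero quotient, namely $E_\infty^{p,n}(K;G)$. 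Thus $\mathcal H^{p,n}(K;G)=E_2^{p,n}(K;G)=E_\infty^{p,n}(K;G)\cong H^{n-p}(K;G)$, and the homeology statement follows identically using $D_r$ and the homeology half of Proposition \ref{prop:1}. Note that the proof of Proposition \ref{prop:1} is unchanged with coefficients, since the pieces $2^\sigma$ appearing there are contractible.

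\textbf{The main obstacle.} Nothing here is deep; the one place that genuinely needs care is the reduction step — extracting the freeness of $\w H_m(\mathrm{link}_K\sigma;\zz)$ from the integral hypothesis so that the vanishing persists after base change to an arbitrary group $G$ — together with the routine but easy-to-botch bookkeeping relating the spectral-sequence bidegree (the line $q=n$) to the cohomological degree $n-p$, and the observation that Proposition \ref{prop:1} remains valid with coefficients.
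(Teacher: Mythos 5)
Your overall strategy is exactly the paper's: show via Lemma \ref{lem:1} that the $E_1$-page is concentrated on the single row $q=n$, conclude that all $\Delta_r$ (resp.\ $D_r$) vanish for $r\geqslant 2$ so that $E_2=E_\infty$, and read off the answer from the convergence to $H^*(K;G)$ supplied by Proposition \ref{prop:1}. That skeleton is correct, and you rightly identify the coefficient reduction as the only substantive point --- it is precisely the point the paper's own three-line proof skips.

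Unfortunately that reduction step is where your argument breaks. From $\w H^i(L;\zz)=0$ for $i\leqslant m-1$ the universal coefficient theorem gives that $\w H_i(L;\zz)$ has trivial free part for $i\leqslant m-1$ and that $\w H_{i-1}(L;\zz)$ is free for $i\leqslant m-1$; combining these kills $\w H_i(L;\zz)$ only for $i\leqslant m-2$. In degree $m-1$ you learn only that $\w H_{m-1}(L;\zz)$ is \emph{torsion}: its $\mathrm{Ext}$-dual lives in $\w H^{m}(L;\zz)$, which the hypothesis does not constrain. Concretely, a link $L$ that is a triangulated projective plane $\RR P^2$ (so $m=2$) satisfies $\w H^i(L;\zz)=0$ for all $i\neq 2$, yet $\w H_1(L;\zz)=\zz/2$, $\w H^1(L;\zz/2)\neq 0$ and $\w H_1(L;\zz/2)\neq 0$. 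So your claimed vanishing $\w H^i(L;G)=\w H_i(L;G)=0$ for $i\neq m$ is false in general, and with it the concentration of $E_1(K;G)$ on $q=n$ when $G$ has torsion --- and, on the homeology side, even the concentration of $E^1(K;\zz)$. (Your parenthetical alternative $E_1^{p,q}(K;G)\cong E_1^{p,q}(K;\zz)\otimes G$ fails for the same reason: the missing $\mathrm{Tor}$ terms are exactly where the link torsion bites.) To be fair, this is a defect of the theorem as stated rather than something you introduced: the paper's bare assertion that $E_1^{p,q}(K;G)=0$ for $q\neq n$ has the identical gap, and the statement is safe only for the integral cohomeology half (where no UCT is needed) or under a strengthened hypothesis, e.g.\ that each $\mathrm{link}_K\sigma$ has free integral homology concentrated in degree $n-|\sigma|$ --- which holds in the intended manifold application, where links are spheres or disks. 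With that strengthening, the remainder of your write-up (the degeneration, the boundedness of the filtration, and the coefficient version of Proposition \ref{prop:1}) is correct and complete.
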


\begin{proof}
We only prove the cohomeology case.

By Lemma \ref{lem:1}, $E^{p,\,q}_{\,1}(K;G)=0$ if $q\neq n$. Then the differential $\Delta_r$ in the cohomeology spectral sequence is zero for $r\geqslant 2$. So the cohomeology spectral sequence collapses at $E_2$-term. From Proposition \ref{prop:1}, we have that $\{E_{r}(K;G)\}$ converges to $H^*(K;G)$. Then the statement of Theorem \ref{th5} follows.
\end{proof}

\begin{rem}
If $K$ is a triangulation of a $n$-dimensional manifold $M^n$ with boundary $\partial M^n$, then $K$ is apparently satisfies the condition in Theorem \ref{th5}. Eventually, we can use Theorem \ref{th5} to prove Lefschetz duality theorem as follows.

Consider the following commutative diagram of complexes:
\[\begin{CD}
E_{\,1}^{n,n}(K)@>\Delta_1>> E_{\,1}^{n-1,n}(K)@>\Delta_1>>\cdots @>\Delta_1>> E_{\,1}^{0,n}(K)@>>>0\\
@Vf_nV\cong V @Vf_{n-1}V\cong V  && @Vf_{0}V\cong V \\
C_{n}(K,\partial K)@>d>>C_{n-1}(K,\partial K)@>d>>\cdots @>d>>C_{0}(K,\partial K)@>>>0
\end{CD}\]
where $f_i$ are isomorphisms defined as follows.
If $\sigma\in K\setminus\partial K$, then $|{\rm link}_K\sigma|\cong S^{n-|\sigma|}$, and then in this case
\[\w H^{*}({\rm link}_K\sigma)\cong\w H^{n-|\sigma|}(S^{n-|\sigma|})=\mathbb{Z}.\] Choose a cohomology class $[\tau]$ as the generator of
$H^{n-|\sigma|}({\rm link}_K\sigma)$ so that $\sigma*\tau$ corresponds to the orientation of $M^{n}$. Then define $f_{|\sigma|-1}([\tau])=\sigma.$
On the other hand, if $\sigma\in K\setminus\partial K$, then $|\mathrm{link}_K\sigma|\cong D^{n-|\sigma|}$, so $\w H^{*}(\mathrm{link}_K\sigma)=0$.
From Lemma \ref{lem:1}, it is easy to see that $f_i$ is an isomorphism for each $i\geqslant 0$. Therefore
$\mathcal {H}^{i,n}(K)\cong H_i(M^n,\partial M^n)$, and by applying Theorem \ref{th5}, we have
\[H^{n-i}(M^n)\cong H_i(M^n,\partial M^n),\]
which is exactly the form of Lefschetz duality theorem.
\end{rem}
\begin{theorem}\label{thm:5}
Let $K$ and $L$ be two simplicial complexes. Then
\begin{enumerate}[(a)]
\item $\mathcal {H}^{*,*}(K\sqcup L)\cong \mathcal {H}^{*,*}(K)\oplus\mathcal {H}^{*,*}( L)$, where $\sqcup$ means the disjoint union of sets.
\item If one of $\w E_{\,1}^{*,*}(K)$, $\w E_{\,1}^{*,*}(L)$ is torsion free and one of $\w{\mathcal {H}}^{*,*}(K)$, $\w{\mathcal {H}}^{*,*}( L)$ is torsion free, then
    \[\Sigma^{-1}\w{\mathcal {H}}^{*,*}(K*L)\cong \w{\mathcal {H}}^{*,*}(K)\otimes\w{\mathcal {H}}^{*,*}( L),\]
where $\Sigma^{-1}$ means lowering the bidegree by $(-1,-1)$.
\end{enumerate}

\end{theorem}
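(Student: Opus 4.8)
The plan is to obtain (a) from a direct-sum decomposition of the defining bicomplex, and to obtain (b) by identifying the bicomplex of a join with a tensor product of bicomplexes and then pushing the Künneth theorem through the spectral sequence one page at a time. For (a): in $K\sqcup L$ every simplex lies wholly in $K$ or wholly in $L$; a containment $\sigma\subset\tau$ keeps both in the same summand; and the coboundary $\delta$ cannot mix summands, since if $(v_0,\dots,v_k)$ is a face of $K$ and $v$ is a vertex of $L$ then $(v,v_0,\dots,v_k)=0$. Hence $N^{*,*}(K\sqcup L)\cong N^{*,*}(K)\oplus N^{*,*}( L)$ as bicomplexes, and the filtration $F^{p}$ respects the splitting; so the entire cohomeology spectral sequence, in particular its $E_2$-page $\mathcal H^{*,*}$, splits as a direct sum.

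For (b), the structural input is that the join is a tensor-product operation on chains. Write a simplex of $K*L$ uniquely as $\sigma*\tau$ with $\sigma\in K$, $\tau\in L$ (either possibly empty, $\varnothing*\varnothing=\varnothing$); then $\sigma_1*\tau_1\subseteq\sigma_2*\tau_2$ iff $\sigma_1\subseteq\sigma_2$ and $\tau_1\subseteq\tau_2$, and one checks from the definitions the derivation identities $d(\sigma*\tau)=(d\sigma)*\tau+(-1)^{|\sigma|}\sigma*(d\tau)$ and $\delta(\sigma*\tau)=(\delta\sigma)*\tau+(-1)^{|\sigma|}\sigma*(\delta\tau)$, valid for the reduced complexes as well, with $\varnothing$ serving as the unit for $*$. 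After renormalizing the generators $\sigma_1*\tau_1\otimes\sigma_2*\tau_2$ by suitable Koszul signs these yield an isomorphism of filtered bicomplexes
\[\Sigma^{-1}\w N^{*,*}(K*L)\;\cong\;\w N^{*,*}(K)\otimes\w N^{*,*}( L),\]
where the right-hand side carries the tensor-product bicomplex structure and the convolution of the filtrations $\w F^{p}$. Each $\w F^{p}$ on $\w N^{*,*}$ is split (a submodule of a direct sum), so the associated graded of the tensor product is $E_0(\w N(K))\otimes E_0(\w N(L))$; moreover all the groups $\w N^{*,*}$, hence all their associated gradeds, are free abelian.

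Now I would apply the Künneth theorem over $\mathbb Z$ twice. First, $\Sigma^{-1}\w E_1(K*L)$ is the homology of $E_0(\w N(K))\otimes E_0(\w N( L))$, a tensor product of complexes of free abelian groups, so Künneth gives a short exact sequence with submodule $\w E_1(K)\otimes\w E_1( L)$ and quotient $\mathrm{Tor}\big(\w E_1(K),\w E_1( L)\big)$ up to regrading; the hypothesis that one of $\w E_1^{*,*}(K)$, $\w E_1^{*,*}( L)$ is torsion free kills the Tor term, so $\Sigma^{-1}\w E_1(K*L)\cong\w E_1(K)\otimes\w E_1( L)$, and by naturality of the cross product this is an isomorphism of complexes carrying $\Delta_1$ to the tensor differential $\Delta_1\otimes 1\pm 1\otimes\Delta_1$. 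Taking homology and applying Künneth once more — still legitimate because one of $\w E_1(K)$, $\w E_1( L)$ is a complex of torsion-free groups — produces a Künneth sequence for $\Sigma^{-1}\w E_2(K*L)$ whose Tor term is $\mathrm{Tor}\big(\w{\mathcal H}^{*,*}(K),\w{\mathcal H}^{*,*}( L)\big)$; the hypothesis that one of $\w{\mathcal H}^{*,*}(K)$, $\w{\mathcal H}^{*,*}( L)$ is torsion free removes it, and we obtain $\Sigma^{-1}\w{\mathcal H}^{*,*}(K*L)\cong\w{\mathcal H}^{*,*}(K)\otimes\w{\mathcal H}^{*,*}( L)$.

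I expect the main obstacle to be the sign bookkeeping: choosing the Koszul renormalization so that $\Sigma^{-1}\w N(K*L)\to\w N(K)\otimes\w N( L)$ is an honest isomorphism of bicomplexes and not merely of bigraded groups, and, relatedly, confirming that the cross-product isomorphism intertwines the $\Delta_1$-differentials, which is precisely what allows the second Künneth step to be applied to the correct pair of complexes. Everything else is either a formal property of tensor products of split-filtered complexes or a direct consequence of the classical Künneth short exact sequence over $\mathbb Z$.
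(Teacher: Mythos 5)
Your proposal is correct and follows essentially the same route as the paper: part (a) via the direct-sum splitting of $(N^{*,*}(K\sqcup L),\Delta)$, and part (b) via the Koszul-signed isomorphism $(\w N^{*,*}(K),\Delta)\otimes(\w N^{*,*}(L),\Delta)\cong \Sigma^{-1}(\w N^{*,*}(K*L),\Delta)$ followed by the K\"unneth theorem. The only difference is that you spell out the two K\"unneth applications (at the $E_1$ and $E_2$ pages, matching the two torsion-freeness hypotheses) that the paper compresses into the single phrase ``follows by K\"unneth theorem.''
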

\begin{proof}
(a) is an immediate consequence of the isomorphism of double complexes
\[(N^{*,*}(K\sqcup L),\Delta)\cong (N^{*,*}(K),\Delta)\oplus (N^{*,*}(L),\Delta).\]

(b) The map \[(\sigma\otimes\tau)\otimes(\sigma'\otimes\tau')\mapsto (-1)^{|\tau|\cdot|\sigma'|}(\sigma*\sigma')\otimes(\tau*\tau')\] gives the following isomorphism of double complexes
\[(\w N^{*,*}(K),\Delta)\otimes(\w N^{*,*}(L),\Delta)\cong \Sigma^{-1} (\w N^{*,*}(K*L),\Delta).\]
Then the statement of (b) follows by K\"{u}nneth theorem.
\end{proof}

The homoelogy version of Theorem \ref{thm:5} is obtained in the same way.

At the end of this section, we calculate the cohomeology groups of some examples.

\begin{exmp}\label{exmp:1}
Both the $n$-disk $D^n$ and the $n$-sphere $S^n$ satisfy the condition in Theorem \ref{th5}, so
\[\mathcal {H}^{*,*}(D^n)=\mathcal {H}^{n,n}(D^n)=\mathbb{Z},\]
and \[\mathcal {H}^{0,n}(S^n)=\mathcal {H}^{n,n}(S^n)=\mathbb{Z},\ \mathcal {H}^{p,\,q}(S^n)=0 \text{ otherwise.}\]

As an application, cohomeology groups distinguishes disks of different dimensions, i.e., $D^n\not\cong D^m$ if $m\neq n$.
\end{exmp}

\begin{exmp}
Let $C_nK$ be the join of a simplicial complex $K$ with the $0$-dimensional simplicial complex consisting of $n$ isolated vertices (e.g., $C_1K=\mathrm{cone}K$, $C_2K=SK$). It is easy to verify that for any $0$-dimensional complex $L$,
\[\w {\mathcal {H}}^{*,*}(L)=\w {\mathcal {H}}^{0,0}(L)=\mathbb{Z}.\] Then by Theorem \ref{thm:5},
$\w {\mathcal {H}}^{p+1,\,q+1}(C_nK)=\w {\mathcal {H}}^{p,\,q}(K)$ for all $p,\,q\in \mathbb{Z}$ and $n\geqslant1$.
 Notice that $C_3K$ and $SK\vee SK$ have the same homotopy type. But in general,
\[\w {\mathcal {H}}^{*,*}(SK\vee SK)\neq\w {\mathcal {H}}^{*,*}(C_3K).\] For example, if $K=\partial\Delta^n$ with $n\geqslant2$, then $\w {\mathcal {H}}^{n,n}(SK\vee SK)=\zz\oplus\zz$ (ref. theorem \ref{th6}), whereas \[\w {\mathcal {H}}^{n,n}(C_3K)=\w {\mathcal {H}}^{n-1,n-1}(\partial\Delta^n)={\mathcal {H}}^{n-1,n-1}(\partial\Delta^n)=\zz.\] This shows that reduced cohomeology groups are not homotopy invariants.
\end{exmp}

\section{Block complex and its (co)homeology groups}
\begin{definition}
For a simplicial complex $K$, a \emph{block complex} $\mathcal {B}=\{b_i^n\}$ on $K$ is a collection $\mathcal {B}=\{b^n_i\}_{n\geqslant-1}$ of simplicial subcomplexes $b^n_i$ of $K$ that satisfies the following conditions.
\begin{enumerate}
\item Every $b_i^n$ is a triangulation of disk $D^n$. Especially, $b^{-1}=\varnothing$.
\item $\overset{\comp}{|b_i^m|}\bigcap\overset{\comp}{|b_j^n|}=\varnothing$ for any $b_i^m,\,b_j^n\in \mathcal {B}$.
\item $|K|=\bigcup_{\,b_i^n\in \mathcal {B}}\overset{\comp}{|b_i^n|}$.
\end{enumerate}
$b_i^n$ is called a \emph{$n$-block} of $\mathcal {B}$. $b_i^m$ is called a \emph{face} of $b_j^n$ if $b_i^m\subset b_j^n$.
\end{definition}

\begin{exmp}
$\mathcal {B}^0=\{2^{\sigma}\}_{\sigma\in K}$ is clearly a block complex on $K$ which is called the \emph{trivial block complex} on $K$.
\end{exmp}
\begin{exmp}
Let $K$ be a simplicial complex, $K'$ be a stellar subdivision of $K$. Then $\mathcal {B}=\{K'_\sigma\subset K':\sigma\in K,\, |K'_\sigma|=\langle\sigma \rangle\}$ is a block complex on $K'$.
\end{exmp}

\begin{definition} Let $\mathcal {B}$ be a block complex on a simplicial complex $K$. Let $C_n(\mathcal {B})=C^n(\mathcal {B})$ be the free abelian group generated by all $n$-blocks of $\mathcal {B}$. Set
\[C_*(\mathcal {B})=C^*(\mathcal {B})=\bigoplus_{k\geqslant 0}C_k(\mathcal {B}).\]
The \emph{block chain complex} $C_*(\mathcal {B},d)$ and \emph{block cochain complex} $C^*(\mathcal {B},\delta)$ of $\mathcal {B}$ are defined as follows.

Take an orietation on every block of $\mathcal {B}$. A chain $n$-simplex $\sigma$ of a given $n$-block $b_i^n$ is said to be positive if
$\langle\sigma \rangle$ has the same orientation as $|b_i^n|$, otherwise $\sigma$ is said to be negative.  For two blocks $b^n_i$ and $b^{n-1}_j$, the connecting coefficient $[b^n_i,b^{n-1}_j]$ is defined as follows. $[b^n_i,b^{n-1}_j]=0$ if $b^{n-1}_j$ is not a face of $b^n_i$; $[b^n_i,b^{n-1}_j]=1$ if there is a positive chain simplex $(v_0,v_1\dots,v_n)$ of $b^n_i$ such that $(v_1,\dots,v_{n})$ is a positive chain simplex of $b^{n-1}_j$; $[b^n_i,b^{n-1}_j]={-}1$ if there is a positive chain simplex $(v_0,v_1,\cdots,v_n)$ of $b^n_i$ such that $(v_1,\cdots,v_{n})$ is a negative chain simplex of $b^{n-1}_j$. It is obvious that the connecting coefficient is independent of the choice of the chain simplex. Then define
\begin{eqnarray*}
&&d(b^n_i)=\sum_{b^{n-1}_j\in \mathcal {B}}[b^n_i,b^{n-1}_j]b^{n-1}_j,\\
&&\delta(b^{n-1}_j)=\sum_{b^n_i\in \mathcal {B}}[b^n_i,b^{n-1}_j]b^n_i.
\end{eqnarray*}

Set\[\w C_*(\mathcal {B})=\w C^*(\mathcal {B})=\bigoplus_{k\geqslant -1} C_k(\mathcal {B}).\] The \emph{reduced block chain complex} $\w C_*(\mathcal {B},d)$ and \emph{reduced block cochain complex} $\w C^*(\mathcal {B},\delta)$ of
$\mathcal {B}$ are defined similarly. The only differences are $d(b^0_i)=(\varnothing)$, and  $\delta(\varnothing)=\sum_{b^0_i\in\mathcal {B}}b^0_i$.
\end{definition}

\begin{definition}
For a block complex $\mathcal {B}$ on a given simplicial complex $K$, the double complex $(N^{*,*}(\mathcal {B}),\Delta)$
is the double subcomplex of $(C_*(\mathcal {B}),d)\otimes(C^*(\mathcal {B}),\delta)$ generated by all $b^m_i\otimes b^n_j$ such that $b^m_i\subset b^n_j$.

The \emph{cohomeology spectral sequence} $(E_r(\mathcal {B}),\,\Delta_r)_{r\geqslant 1}$ and \emph{cohomeology groups} $\mathcal {H}^{*,*}(\mathcal {B})$ of $\mathcal {B}$
are defined in the same way as Definition \ref{def:0}.

In similar fashion we get the definition of $\w {\mathcal {H}}^{*,*}(\mathcal {B})$, $\mathcal {H}_{*,*}(\mathcal {B})$ and $\w {\mathcal {H}}_{*,*}(\mathcal {B})$.
\end{definition}

Using $\mathcal {B}$ instead of $K'$ in Theorem \ref{thm:3}, we can get the following theorem in the same way as the proof of Theorem \ref{thm:3}
\begin{theorem}\label{thm:4}
Let $\mathcal {B}$ be a block complex on a given simplicial complex $K$. Then for any abelian group $G$
and all $p,q\in \mathbb{Z}$,
\begin{eqnarray*}
&\mathcal {H}^{p,\,q}(\mathcal {B};G)\cong\mathcal {H}^{p,\,q}(K;G),\quad \w {\mathcal {H}}^{p,\,q}(\mathcal {B};G)\cong\w {\mathcal {H}}^{p,\,q}(K;G),\\
&\mathcal {H}_{p,\,q}(\mathcal {B};G)\cong\mathcal {H}_{p,\,q}(K;G),\quad \w {\mathcal {H}}_{p,\,q}(\mathcal {B};G)\cong\w {\mathcal {H}}_{p,\,q}(K;G).
\end{eqnarray*}
\end{theorem}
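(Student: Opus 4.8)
The plan is to transcribe the proof of Theorem~\ref{thm:3} under the dictionary in which the simplicial complex $K$ now plays the role of the subdivision $K'$, and the block complex $\mathcal{B}$ plays the role of the coarser complex. The three axioms defining a block complex provide exactly the geometric input a stellar subdivision provided before: since the interiors of the blocks partition $|K|$ and each block $b$ is a triangulated disk whose boundary $\partial b$ is a subcomplex, every simplex $\sigma\in K$ lies, in its interior, in a unique block, giving a well-defined map $f$ from simplices of $K$ to blocks of $\mathcal{B}$; and for each block $H_*\big(C_*(b,\partial b)\big)\cong H_{\dim b}(D^{\dim b},S^{\dim b-1})\cong\zz$, exactly as the pair $(K'_\sigma,\partial K'_\sigma)$ was used in Theorem~\ref{thm:3}. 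As a preliminary one also records the block analogue of Lemma~\ref{lem:1}, identifying $E_1^{p,q}(\mathcal{B})$ with $\bigoplus_{\dim b=p}\w H^{q-p-1}(\mathrm{link}_{\mathcal{B}}b)$ together with its differential $\Delta_1$, proved word for word as Lemma~\ref{lem:1} with blocks in place of simplices.

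Next I would introduce the intermediate double complex $(\mathcal{N}^{*,*},\Delta)$, the double subcomplex of $\big(C_*(K),d\big)\otimes\big(C^*(\mathcal{B}),\delta\big)$ generated by all $\sigma\otimes b$ with $\langle\sigma\rangle\subset|b|$, and check it is a subcomplex. Then, paralleling Theorem~\ref{thm:3}, I would define a chain map
\[\varphi\colon N^{*,*}(K)\to\mathcal{N}^{*,*},\qquad
\varphi(\sigma\otimes\tau)=
\begin{cases}
\pm\,\sigma\otimes f(\tau)&\text{if }\dim\tau=\dim f(\tau),\\
0&\text{otherwise,}
\end{cases}\]
the sign comparing the orientations of $\langle\tau\rangle$ and of the block $f(\tau)$, and a monomorphism of double complexes $\psi\colon N^{*,*}(\mathcal{B})\to\mathcal{N}^{*,*}$ sending $b\otimes b'$ to $\big(\sum_\sigma\sigma\big)\otimes b'$, where $\sum_\sigma\sigma$ is the fundamental chain of $(b,\partial b)$ (the sum of the positively oriented top chain simplices of $b$). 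Both maps induce morphisms $\varphi_r,\psi_r$ of the spectral sequences coming from the filtrations by first degree, and the theorem reduces to two verifications.

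First, $\varphi_r$ is an isomorphism for all $r\geqslant1$, and for this it suffices to treat $r=1$: for a simplex $\sigma\in K$ the blocks $b$ with $\langle\sigma\rangle\subset|b|$ cut $|\mathrm{link}_K\sigma|$ into pieces $e_\sigma(b)=|\mathrm{link}_K\sigma|\cap|b|$, each a sphere (when $f(\sigma)=b$) or a disk (otherwise), giving $\mathrm{link}_K\sigma$ a CW structure whose cellular cochains recover $C^*_\sigma(\mathcal{B})$ up to a degree shift; hence $H^q\big(C^*_\sigma(\mathcal{B})\big)\cong\w H^{q-|\sigma|}(\mathrm{link}_K\sigma)$ compatibly with $\Delta_1$, and the commutative square as in Theorem~\ref{thm:3} forces $\varphi_1$ to be an isomorphism. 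Second, $\psi_1$ is injective (via the evident cochain isomorphism, exactly as in Theorem~\ref{thm:3}), so the short exact sequences $0\to E_1^{p,q}(\mathcal{B})\to\mathcal{E}_1^{p,q}\to\bar E_1^{p,q}\to0$ yield a long exact sequence relating $\mathcal{H}^{*,*}(\mathcal{B})$, $\mathcal{E}_2$ and the cokernel spectral sequence $\bar E_2$; fixing $q$ and filtering both $E_1^{*,q}(\mathcal{B})$ and $\mathcal{E}_1^{*,q}$ by the dimension of the blocks, the homologies of the associated graded pieces are both $\bigoplus_{\dim b=p}\w H^{q-p-1}(\mathrm{link}_{\mathcal{B}}b)$ (using $H_{\dim b}(b,\partial b)\cong\zz$ on the $\mathcal{E}_1$-side), with $(\psi_1)_*$ an isomorphism between them, so $\bar E_2^{*,q}=0$. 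Then $\psi_2$ is an isomorphism and $\varphi_2^{-1}\psi_2\colon\mathcal{H}^{*,*}(\mathcal{B};G)\xrightarrow{\ \cong\ }\mathcal{H}^{*,*}(K;G)$; the reduced case and the two homeology cases follow by the same, respectively dual, argument, and reinstating the coefficient group $G$ throughout is routine.

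The main point requiring genuine attention, rather than formal transcription, is the CW-decomposition step: one must know that each $e_\sigma(b)$ is a ball or a sphere and that these assemble into a CW structure on $\mathrm{link}_K\sigma$ compatible with the description furnished by the block analogue of Lemma~\ref{lem:1}. This rests on the standard $PL$ fact that the link of a simplex in a triangulated ball is itself a ball or a sphere --- the very fact already invoked for $K'_\sigma$ in the proof of Theorem~\ref{thm:3} --- so it is implicit here that the triangulated disks $b$ are taken in the $PL$ category. Everything else is a line-by-line copy of the argument for Theorem~\ref{thm:3}.
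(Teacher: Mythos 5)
Your proposal is correct and matches the paper exactly: the paper's entire proof of Theorem~\ref{thm:4} is the one-line remark that one substitutes $\mathcal{B}$ for $K'$ in the argument for Theorem~\ref{thm:3}, and your dictionary (with $K$ in the role of the subdivision, $\mathcal{B}$ in the role of the coarser complex, the intermediate double complex inside $C_*(K)\otimes C^*(\mathcal{B})$, and the maps $\varphi$, $\psi$) is precisely the intended transcription. Your explicit identification of the one non-formal ingredient --- that each $e_\sigma(b)$ is a $PL$ ball or sphere so the links acquire a CW structure --- is in fact more careful than the paper, which leaves this implicit.
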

The significance of Theorem \ref{thm:4} is for simplifying the calculation of $\mathcal {H}^{*,*}(K;G)$, since generally $N^{*,*}(\mathcal {B})$ has
much less generators than $N^{*,*}(K)$.

Another application of Theorem \ref{thm:4} is to calculate the (co)homeology groups of the product space $X_1\times X_2$ of two polyhedra
$X_1$ and $X_2$.

\begin{cons}
The fact that the product of two simplices is not a simplex causes some problems with triangulating the products of spaces. However, there is a canonical triangulation of the product of two polyhedra for each choice
of orderings of their vertices. Suppose $K_1$, $K_2$ are simplicial complexes on $[m_1]$
and $[m_2]$ respectively ($[m]$ is the index set $\{1,2,\dots,m\}$). Then we construct a new simplicial complex on $[m_1]\times[m_2]$,
which we call the \emph{Cartesian product} of $K_1$ and $K_2$ and denote $K_1\times K_2$, as follows.
\begin{align*}
K_1\times K_2:=&\{\sigma\subset\sigma_1\times\sigma_2\mid \sigma_1\in K_1,\,\sigma_2\in K_2,\\
&\text{ and } i\leqslant i'\text{ implies } j\leqslant j'\text{ for any two pairs }(i,j), (i',j')\in \sigma\}.
\end{align*}
The polyhedron $|K_1\times K_2|$ defines a canonical triangulation of $|K_1|\times|K_2|$.
\end{cons}
It is easy to see that there is a canonical block complex $\mathcal {B}_{K_1\times K_2}=\{b_{\sigma_1,\sigma_2}\}$ on $K_1\times K_2$, where
$b_{\sigma_1,\sigma_2}=(K_1\times K_2)_{\sigma_1\times\sigma_2}$ (the full subcomplex of $K_1\times K_2$ on $\sigma_1\times\sigma_2$). The orientation of
$|b_{\sigma_1,\sigma_2}|=\langle\sigma_1\rangle\times\langle\sigma_2\rangle$ is induced by the orientations of
$\langle\sigma_1 \rangle$ and $\langle\sigma_2 \rangle$. The map
\[(\sigma_1\otimes\tau_1)\otimes(\sigma_2\otimes\tau_2)\mapsto (-1)^{|\tau_1|\cdot|\sigma_2|}(b_{\sigma_1,\sigma_2})\otimes(b_{\tau_1,\tau_2})\]
 gives the following isomorphism of double complexes
\[(N^{*,*}(K),\Delta)\otimes(N^{*,*}(L),\Delta)\cong (N^{*,*}(\mathcal {B}_{K\times L}),\Delta).\]
From Theorem \ref{thm:4} and K\"{u}nneth theorem, we get the following theorem immediately.
\begin{theorem}\label{thm:6}
If one of $E_{\,1}^{*,*}(K)$, $E_{\,1}^{*,*}(L)$ is torsion free and one of $\mathcal {H}^{*,*}(K)$, $\mathcal {H}^{*,*}( L)$ is torsion free, then
    \[\mathcal {H}^{*,*}(K\times L)\cong \mathcal {H}^{*,*}(K)\otimes\mathcal {H}^{*,*}( L).\]
\end{theorem}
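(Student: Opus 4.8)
\textbf{Proof proposal for Theorem \ref{thm:6}.}

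The plan is to reduce the statement to an application of the algebraic K\"unneth theorem for the tensor product of two bicomplexes, using Theorem \ref{thm:4} to replace the (co)homeology of $K\times L$ by that of the block complex $\mathcal{B}_{K\times L}$. First I would record the isomorphism of double complexes
\[(N^{*,*}(K),\Delta)\otimes(N^{*,*}(L),\Delta)\cong (N^{*,*}(\mathcal {B}_{K\times L}),\Delta)\]
exhibited just above the statement; this is the geometric input, and it is essentially a bookkeeping check that the block $b_{\sigma_1,\sigma_2}$ is a face of $b_{\tau_1,\tau_2}$ exactly when $\sigma_1\subset\tau_1$ and $\sigma_2\subset\tau_2$, together with the sign $(-1)^{|\tau_1|\cdot|\sigma_2|}$ making the two differentials $\Delta\otimes 1+1\otimes\Delta$ and $\Delta$ correspond (the usual Koszul sign when interleaving a tensor product of complexes). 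I would verify the compatibility with the filtration $F^p=\bigoplus_{i\leqslant p}N^{i,*}$ as well, so that the isomorphism is one of \emph{filtered} total complexes and hence induces an isomorphism of the associated cohomeology spectral sequences from $E_1$ onward.

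Next I would invoke the K\"unneth spectral sequence / the algebraic K\"unneth formula at the level of the $E_1$-pages. Concretely, $E_1^{*,*}(K)$ and $E_1^{*,*}(L)$ are complexes of abelian groups under $\Delta_1$, and the filtered isomorphism above identifies $E_1^{*,*}(\mathcal{B}_{K\times L})$ with the total complex of $E_1^{*,*}(K)\otimes E_1^{*,*}(L)$ (with differential $\Delta_1\otimes 1\pm 1\otimes\Delta_1$); passing to $E_2$ means taking homology of this tensor product of complexes. By the hypothesis that one of $E_1^{*,*}(K)$, $E_1^{*,*}(L)$ is torsion free, the K\"unneth short exact sequence
\[0\to \bigoplus \mathcal{H}^{*,*}(K)\otimes\mathcal{H}^{*,*}(L)\to \mathcal{H}^{*,*}(\mathcal{B}_{K\times L})\to \bigoplus \mathrm{Tor}(\mathcal{H}^{*,*}(K),\mathcal{H}^{*,*}(L))\to 0\]
has no $\mathrm{Tor}$ term — here I would use the standard fact that it suffices for one of the \emph{chain complexes} to be termwise flat — and it splits, giving $\mathcal{H}^{*,*}(\mathcal{B}_{K\times L})\cong\mathcal{H}^{*,*}(K)\otimes\mathcal{H}^{*,*}(L)$. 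Actually the second torsion-freeness hypothesis (on one of $\mathcal{H}^{*,*}(K)$, $\mathcal{H}^{*,*}(L)$) is what is needed if one prefers to apply K\"unneth with $E_1(K)$ as a complex and $\mathcal{H}(L)$ as coefficients; I would state clearly which of the two one uses, as in the proof of Theorem \ref{thm:5}(b), and note the symmetry in $K,L$. Finally, combining with the isomorphism $\mathcal{H}^{*,*}(\mathcal{B}_{K\times L};G)\cong\mathcal{H}^{*,*}(K\times L;G)$ from Theorem \ref{thm:4} yields the claim.

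The main obstacle I anticipate is not the homological algebra, which is routine once the filtered-bicomplex isomorphism is in place, but rather pinning down the signs: one must check that the Koszul sign $(-1)^{|\tau_1|\cdot|\sigma_2|}$ in the proposed map is exactly the one that intertwines $\Delta$ on the product side with $\Delta\otimes 1 + (-1)^{?}\,1\otimes\Delta$ on the tensor side, \emph{and} that the resulting differential on $E_1(K)\otimes E_1(L)$ is genuinely the tensor-product differential of complexes (so that ``homology of the tensor product'' is literally what computes $E_2$). A secondary point to be careful about is that the block complex $\mathcal{B}_{K\times L}$ depends on the chosen vertex orderings, but $\mathcal{H}^{*,*}$ of the underlying polyhedron does not, by Theorem \ref{thm:4}; I would remark on this so the statement is unambiguous. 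Everything else — flatness hypotheses, splitting of the K\"unneth sequence, naturality — is standard and I would not belabor it.
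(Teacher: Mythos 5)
Your proposal is correct and follows essentially the same route as the paper: the filtered isomorphism of double complexes $(N^{*,*}(K),\Delta)\otimes(N^{*,*}(L),\Delta)\cong(N^{*,*}(\mathcal{B}_{K\times L}),\Delta)$, the K\"unneth theorem applied at the $E_1$-page (with the two torsion-freeness hypotheses playing exactly the roles you assign them), and Theorem \ref{thm:4} to pass from the block complex back to $K\times L$. The paper states this argument very tersely, so your added care about filtration compatibility and the Koszul sign is a reasonable elaboration rather than a deviation.
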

The homoelogy version of Theorem \ref{thm:6} is obtained in the same way.

\section{Homeotopy and homeotopy type}\label{sec:3}
\begin{definition}
Let $K,L$ be two simplicial complexes. A simplicial map $f\colon K\to L$ is said to be \emph{non-degenerate} if for any simplex $\sigma\in K$,
$|f(\sigma)|=|\sigma|$.
\end{definition}

It is obvious that identity maps, inclusion maps and composite maps of non-degenerate maps are all non-degenerate.
\begin{theorem}\label{thm:2}
Let $f\colon K\to L$ be a non-degenerate map. Then $f$ induces homomorphisms of homeology groups:
\[f_*\colon \mathcal {H}_{*,*}(K;G)\to \mathcal {H}_{*,*}(L;G),\quad  f_*\colon \w {\mathcal {H}}_{*,*}(K;G)\to \w {\mathcal {H}}_{*,*}(L;G),\]

Dually, $f$ induces homomorphisms of cohomeology groups:
\[f^*\colon \mathcal {H}^{*,*}(L;G)\to \mathcal {H}^{*,*}(K;G),\quad  f^*\colon \w {\mathcal {H}}^{*,*}(L;G)\to \w {\mathcal {H}}^{*,*}(K;G).\]
\end{theorem}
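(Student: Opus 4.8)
The plan is to produce, from a non-degenerate simplicial map $f\colon K\to L$, a single chain map of double complexes
\[
f^{\sharp}\colon (N^{*,*}(L),\Delta)\longrightarrow (N^{*,*}(K),\Delta)
\]
together with its reduced analogue $\w N^{*,*}(L)\to\w N^{*,*}(K)$, and then extract all four homomorphisms from it. On a generator $\sigma'\otimes\tau'$ of $N^{*,*}(L)$ (so $\sigma'\subseteq\tau'$ are faces of $L$) I would set
\[
f^{\sharp}(\sigma'\otimes\tau')=\sum_{\tau}\varepsilon(\tau)\,(f|_{\tau})^{-1}(\sigma')\otimes\tau ,
\]
the sum running over all faces $\tau\in K$ with $f(\tau)=\tau'$; non-degeneracy makes each $f|_{\tau}\colon\tau\to\tau'$ a bijection, so $(f|_{\tau})^{-1}(\sigma')$ is a well-defined face of $K$ contained in $\tau$, and $\varepsilon(\tau)=\pm1$ is determined by comparing the chosen orientations of $\langle\tau\rangle$ and $\langle\tau'\rangle$ via $f|_{\tau}$ (the sum is empty when $\tau'$ is not in the image of $f$). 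Thus $f^{\sharp}$ pulls the cochain factor back and simultaneously transports the chain factor back along the bijection $f|_{\tau}$; this simultaneous transport is exactly what the condition $\sigma\subseteq\tau$ in the definition of $N^{*,*}$ makes possible, and is why $f$ fails to act on all of $C_{*}\otimes C^{*}$ but does act here.

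First I would check that $f^{\sharp}$ is well defined (it respects the chain-simplex relations: both sides vanish on a repeated vertex and change by the same sign under a reordering) and that it commutes with $\Delta$; since $\Delta=d\otimes 1+(-1)^{|\sigma|}\,1\otimes\delta$ it suffices to treat $d\otimes 1$ and $1\otimes\delta$ separately. Commutation with $d\otimes 1$ is immediate: $f|_{\tau}$ is a simplicial isomorphism onto $f(\tau)$, so $(f|_{\tau})^{-1}$ commutes with $d$, and one applies this term by term over the lifts $\tau$. Commutation with $1\otimes\delta$ rests on the bijection between the lifts $\mu\in K$ of a coface $\{w\}\cup\tau'$ of $\tau'$ and the pairs $(\tau,v)$ consisting of a lift $\tau$ of $\tau'$ together with a vertex $v$ of $K$ with $\{v\}\cup\tau\in K$; here non-degeneracy is essential, because $\{v\}\cup\tau\in K$ forces $|f(\{v\}\cup\tau)|=|\tau|+1$, hence $f(v)\notin\tau'$, so that $\{f(v)\}\cup\tau'$ really is a coface of $\tau'$ in $L$. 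Under this bijection the two termwise sums match, once the orientation signs $\varepsilon(\tau),\varepsilon(\mu)$ and the bicomplex sign $(-1)^{|\sigma'|}$ are reconciled.

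Granted that $f^{\sharp}$ is a chain map of double complexes, the rest is formal. It is homogeneous of bidegree $(0,0)$, so it preserves the filtration $F^{p}=\bigoplus_{i\leqslant p}N^{i,*}$ and induces a morphism of cohomeology spectral sequences $f^{\sharp}_{r}\colon E_{r}(L)\to E_{r}(K)$; I define $f^{*}:=f^{\sharp}_{2}\colon\mathcal{H}^{*,*}(L)\to\mathcal{H}^{*,*}(K)$, and similarly with $G$-coefficients after tensoring with $G$. Dualizing, $\mathrm{Hom}_{\mathbb{Z}}(f^{\sharp},\mathbb{Z})\colon N_{*,*}(K)\to N_{*,*}(L)$ is a chain map for $D=\Delta^{*}$ preserving $F_{p}=\bigoplus_{i\geqslant p}N_{i,*}$, hence a morphism of homeology spectral sequences, and I define $f_{*}$ on $\mathcal{H}_{*,*}$ to be its effect on $E_{2}$-terms. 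The reduced statements follow by extending $f^{\sharp}$ to $\w N^{*,*}$ through $f^{\sharp}((\varnothing)\otimes\tau')=\sum_{f(\tau)=\tau'}\varepsilon(\tau)\,(\varnothing)\otimes\tau$ and $f^{\sharp}((\varnothing)\otimes(\varnothing))=(\varnothing)\otimes(\varnothing)$, then rechecking the chain-map identity on the generators involving $(\varnothing)$, where $d(\varnothing)=0$ and $\delta(\varnothing)=\sum_{v}(v)$ come in. One also reads off at once that $\mathrm{id}^{\sharp}=\mathrm{id}$ and $(hf)^{\sharp}=f^{\sharp}h^{\sharp}$, which already gives functoriality.

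The step I expect to be the main obstacle is the sign bookkeeping in the verification that $f^{\sharp}$ commutes with $1\otimes\delta$: one must fix the orientation signs $\varepsilon(\tau)$ so that, under the correspondence $\mu\leftrightarrow(\tau,v)$, the sign carried by the chain simplex $\mu$ agrees with the one carried by $(v)*\tau$ together with $(-1)^{|\sigma'|}$, uniformly across all terms. This is the same kind of orientation-tracking argument used in the proof of Theorem \ref{thm:3}; by contrast the underlying combinatorial correspondences are forced by non-degeneracy and present no difficulty.
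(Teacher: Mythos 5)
Your construction is essentially the paper's own: the paper defines $f^*(\sigma'\otimes\tau')=\sum\sigma\otimes\tau$, summed over all $\sigma\otimes\tau\in N^{*,*}(K)$ with $f(\sigma)=\sigma'$ and $f(\tau)=\tau'$ (by non-degeneracy this is exactly your sum over lifts $\tau$ with $\sigma=(f|_{\tau})^{-1}(\sigma')$, the signs $\varepsilon(\tau)$ being absorbed into the orientations of the chain simplices), takes $f_*$ to be the dual map $\sigma\otimes\tau\mapsto f(\sigma)\otimes f(\tau)$, and passes to the induced morphisms of filtered complexes and spectral sequences. The paper dismisses the chain-map verification as "readily verified," whereas you spell out the commutation with $1\otimes\delta$ via the bijection $\mu\leftrightarrow(\tau,v)$, which is precisely where non-degeneracy enters and which goes through as you describe.
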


\begin{proof} Note that $f$ induces homomorphisms of double complexes:
\[f_*\colon (N_{*,*}(K;G), D)\to (N_{*,*}(L;G), D), \quad f_*\colon (\w N_{*,*}(K;G),D)\to (\w N_{*,*}(L;G),D),\]
defined by  $f_*(\sigma\otimes\tau)=f(\sigma)\otimes f(\tau)$.

Dually, $f$ induces double complex homomorphisms:
\[f^*\colon (N^{*,*}(L;G), \Delta)\to (N^{*,*}(K;G), \Delta), \quad \w f^*\colon (\w N^{*,*}(L;G),\Delta)\to (\w N^{*,*}(K;G),\Delta),\]
define by $f^*(\sigma'\otimes\tau')=\sum \sigma\otimes\tau$, the summation taken over all $\sigma\otimes\tau\in N^{*,*}(K)$ such that
$f(\sigma)=\sigma'$, $f(\tau)=\tau'$. Then  the conclusions in the theorem can be readily verified.
\end{proof}
The following two basic properties of induced homomorphisms of (co)homeology groups are easily verified:

1) $(fg)_*=f_*g_*$, $(fg)^*=g^*f^*$ for a composed map $K_0\xrightarrow{g}K_1\xrightarrow{f}K_2$.

2) $\mathbbold{1}_*=\mathbbold{1}$ where $\mathbbold{1}$ denotes the identity map of a space or a group.

So the correspondence $K\mapsto  \mathcal {H}_{*,*}(K;G)$ (resp. $\mathcal {H}^{*,*}(K;G)$) gives rise to a  covariant (resp. contravariant) functor from
the category of simplicial complexes and non-degenerate maps to the category of groups  and group homomorphisms.
\begin{definition}\label{def:1}
Two non-degenerate maps  $f_0,f_1\colon K\to L$ are said to be \emph{homeotopic}, denoted by $f_0\approxeq f_1$,
if there is a non-degenerate map $H\colon I\times K\to I\times L$ ($I$ is any triangulation of $[0,1]$) such that $H|_{(i,K)}=f_i$ for $i=0,1$. $H$ is called a \emph{homeotopy (map) from $f_0$ to $f_1$}.
\end{definition}
\begin{theorem}\label{thx}
If two non-degenerate maps $f,g\colon K\to L$ are homeotopic, then they induce the same (co)homeology group homomorphism.
\end{theorem}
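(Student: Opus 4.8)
The plan is to imitate the standard proof that homotopic maps induce the same map on (co)homology, but in the setting of the double complexes $N_{*,*}$ and the associated spectral sequences. By Theorem \ref{thm:2} we already know that a non-degenerate map induces a chain map of double complexes, so it suffices to produce, from a homeotopy $H\colon I\times K\to I\times L$ between $f$ and $g$, a chain homotopy between $f_*$ and $g_*$ on the total complexes $T_*(N(K))\to T_*(N(L))$ that is filtration-preserving (i.e., compatible with the filtration defining the (co)homeology spectral sequence). Such a chain homotopy passes to the $E_2$-pages and gives $f_*=g_*$ on $\mathcal{H}_{*,*}$ (and dually on $\mathcal{H}^{*,*}$); the reduced cases are identical.

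First I would set up the combinatorial prism operator. Fix the triangulation $I$ of $[0,1]$ appearing in the definition of homeotopy; write $i_0,i_1\colon K\to I\times K$ for the inclusions at the two endpoints, so $H\circ i_0 = (\{0\}, f)$ and $H\circ i_1 = (\{1\}, g)$ as non-degenerate maps $K\to I\times L$ (followed by the projection $I\times L\to L$). The key observation is that the Cartesian product construction of \S\ref{sec:2} gives a canonical block complex $\mathcal{B}_{I\times K}$ on $I\times K$, and by the isomorphism of double complexes preceding Theorem \ref{thm:6} together with Theorem \ref{thm:4}, one has $N_{*,*}(I\times K)\simeq N_{*,*}(\mathcal{B}_{I\times K})\cong N_{*,*}(I)\otimes N_{*,*}(K)$. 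Now $I$ is a subdivided interval, hence a triangulation of $D^1$, so by Proposition \ref{prop:1} (or Example \ref{exmp:1} applied to $D^1$) the total complex of $N_{*,*}(I)$ has the homology of a point, and concretely there is an explicit chain contraction of $N_{*,*}(I)$ onto the subcomplex corresponding to either endpoint $0$ or $1$. Tensoring this contraction with $\mathrm{id}_{N_{*,*}(K)}$ produces a filtration-preserving chain homotopy $P$ on $T_*(N(I\times K))$ between the two chain maps $(i_0)_*$ and $(i_1)_*$ induced by the endpoint inclusions $K\hookrightarrow I\times K$.

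Next I would push this prism homotopy forward along $H$. Since $H$ is non-degenerate, $H_*\colon N_{*,*}(I\times K)\to N_{*,*}(I\times L)$ is a chain map of double complexes, hence filtration-preserving, by Theorem \ref{thm:2}. Composing, $H_*\circ P$ (suitably post-composed with the projection $I\times L\to L$, which is also non-degenerate and so induces a filtration-preserving chain map) is a filtration-preserving chain homotopy on $T_*(N(L))$ between $H_*(i_0)_*$ and $H_*(i_1)_*$ — that is, between $f_*$ and $g_*$ — because $H\circ i_0$ and $H\circ i_1$, after projecting, are exactly $f$ and $g$. A filtration-preserving chain homotopy induces the zero map on the associated graded and therefore on every page $E^r$, $r\geqslant 1$, of the spectral sequence; in particular $f_*=g_*$ on $E^2_{*,*}=\mathcal{H}_{*,*}$. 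Dualizing the whole argument (applying $\mathrm{Hom}_{\mathbb Z}(-,\mathbb Z)$ throughout, or equivalently running the cochain-level versions of Theorem \ref{thm:2} and of the contraction of $N^{*,*}(I)$) yields $f^*=g^*$ on $\mathcal{H}^{*,*}$, and the reduced versions come from replacing $N$ by $\w N$ everywhere.

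The main obstacle I expect is bookkeeping the compatibility of all the identifications with the filtrations and with the differentials $D$ (resp.\ $\Delta$): one must check that the Cartesian-product/block isomorphism $N_{*,*}(I\times K)\cong N_{*,*}(I)\otimes N_{*,*}(K)$ intertwines the filtration $F_p$ on the left with the tensor-product filtration on the right, that the chosen contraction of $N_{*,*}(I)$ is genuinely filtration-preserving after tensoring (it is, since $I$ contributes in filtration degrees $\leqslant 1$ in a controlled way), and that all the maps in sight — the endpoint inclusions, $H$, and the projection — are non-degenerate so that Theorem \ref{thm:2} applies verbatim. Once these compatibilities are verified the conclusion is formal. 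A secondary point requiring a line of care is that the contraction of $N_{*,*}(I)$ must be chosen so that its composite with an endpoint inclusion is the identity and with the other endpoint is the "slide" map, which is exactly the content of $D^1$ being contractible with a specified deformation; this is elementary but should be written down explicitly for the total-complex differential $D$ rather than merely invoked.
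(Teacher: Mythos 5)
Your proposal follows the classical prism-operator template, but that template breaks precisely at the point where this theory differs from ordinary (co)homology, and the paper's proof is built to avoid it. The key step you need --- ``an explicit chain contraction of $N_{*,*}(I)$ onto the subcomplex corresponding to either endpoint,'' filtration-preserving after tensoring --- cannot exist. If such a filtration-compatible contraction existed, it would force $\mathcal{H}^{*,*}(I_n)\cong\mathcal{H}^{*,*}(\mathrm{pt})$; but by Example \ref{exmp:1} one has $\mathcal{H}^{*,*}(I_n)=\mathcal{H}^{1,1}(I_n)=\mathbb{Z}$, concentrated in bidegree $(1,1)$, whereas a point has its $\mathbb{Z}$ in bidegree $(0,0)$. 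This is exactly the non-homotopy-invariance that the paper is designed to exhibit: the interval is \emph{not} contractible in this theory, so the prism argument's engine is missing. A chain homotopy that is merely defined on the total complexes (ignoring filtrations) does exist, since $H^*(T^*(N(I_n)))\cong\mathbb{Z}$ by Proposition \ref{prop:1}, but such a homotopy only yields $f_*=g_*$ on $E_\infty$, i.e.\ on ordinary (co)homology --- not on the $E_2$-term $\mathcal{H}_{*,*}$, which is the assertion of the theorem. Two further concrete problems: the projection $I\times L\to L$ is \emph{degenerate} (it collapses the edge $\{(v_0,w),(v_1,w)\}$ to the vertex $w$), so Theorem \ref{thm:2} does not give you the map you post-compose with; and the identification $N_{*,*}(I\times K)\cong N_{*,*}(I)\otimes N_{*,*}(K)$ holds only through the block complex $\mathcal{B}_{I\times K}$ and is established in the paper only as an isomorphism of spectral sequences via a zig-zag (Theorem \ref{thm:4}), not as a chain map you can compose with $H_*$ on $N_{*,*}(I\times K)$.

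The paper's proof sidesteps all of this by never comparing $K$ with $I\times K$. It compares $\mathbbold{1}\times f$ and $\mathbbold{1}\times g$ as maps $I_1\times K\to I_1\times L$: it first checks that every non-degenerate map $I_m\to I_n$ induces an isomorphism on $\mathcal{H}^{1,1}$ (sending the generator $\sum e_i\otimes e_i$ to the corresponding generator), extends the homeotopy $H$ to a map $\Phi\colon I_{n+2}\times K\to I_{n+2}\times L$ agreeing with $\varphi_0\times f$ on the first edge and $\varphi_1\times g$ on the last, and deduces from the resulting commutative diagram that $(\mathbbold{1}\times f)^*=\Phi^*=(\mathbbold{1}\times g)^*$; only then is the $I$-factor cancelled using the K\"unneth isomorphism of Theorem \ref{thm:6}. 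If you want to salvage a homotopy-theoretic argument, it has to take place at the level of $I\times K$ versus $I\times L$ in this way; any route that retracts $I$ to a point is structurally incompatible with the invariant being studied.
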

\begin{proof} We only prove the cohomeology case.
First we define $1$-dimensional simplicial complex $I_n$ ($n{>}0$) as follows. The vertex set of $I_n$ is $\{v_i\,|\,i=0,1,\dots,n\}$,
and the edge set is
\[\{e_i=\{v_{i-1},v_{i}\}\,|\,i=1,\dots,n\}.\] Then $\{I_n\}$ is the set of triangulations of $I$.
From Example \ref{exmp:1} we have $\mathcal {H}^{p,\,q}(I_n)=0$ if $(p,\,q)\neq(1,1)$ and
$\mathcal {H}^{1,1}(I_n)=\Bbb Z$. It is easily verified that $\alpha=\sum_{i=1}^{n}e_i\otimes e_i$ is a generator of
$\mathcal {H}^{1,1}(I_n)$. This implies that any
non-degenerate map from $I_m$ to $I_n$ induces isomorphism of cohomeology groups. So by theorem \ref{thm:6},
for any simplicial complex $K$ and any non-degenerate map $\varphi\colon I_m\to I_n$, the map $\varphi\times\mathbbold{1}\colon I_m\times K\to I_n\times K$ induces isomorphism of cohomeology groups
\[(\varphi\times\mathbbold{1})^*:\mathcal {H}^{*,*}(I_n\times K)\to \mathcal {H}^{*,*}(I_m\times K).\]

Given a homeotopy $H\colon I_n\times K\to I_n\times L$ from $f$ to $g$, there is a commutative diagram:
\[
\begin{CD}
I_1\times K @>\mathbbold{1}\times f>> I_1\times L \\
@V{\varphi_0\times \mathbbold{1}_K}VV @V{\varphi_0\times \mathbbold{1}_L}VV\\
I_{n+2} \times K @>\Phi>> I_{n+2}\times L \\
@A{\varphi_1\times \mathbbold{1}_K}AA @A{\varphi_1\times \mathbbold{1}_L}AA\\
I_1\times K @>\mathbbold{1}\times g>> I_1\times L
\end{CD}
\]
where $\varphi_0$ and $\varphi_1$ are simplicial maps so that $\varphi_0(e_1)=e_1$, $\varphi_1(e_1)=e_{n+2}$, and $\Phi$ is defined as follows:
\[
\Phi|_{e_i\times K}=
\begin{cases}
\varphi_0\times f&\ i=1,\\
H&\ 2\leqslant 1\leqslant n+1,\\
\varphi_1\times g&\ i=n+2.
\end{cases}\]
By the above analysis, all the vertical maps induce identity isomorphisms of cohomeology groups.
Thus $\Phi^*=(\mathbbold{1}\times f)^*=(\mathbbold{1}\times g)^*$. Applying Theorem \ref{thm:6} again, we obtain $f^*=g^*$.
\end{proof}
\begin{definition} Two polyhedra $X$ and $Y$ are said to be \emph{homeotopic equivalent}, or to have the same \emph{homeotopy type}, denoted by $X\approxeq Y$, if there are non-degenerate maps $f\colon X\to Y$ and $g\colon Y\to X$ such that $gf\approxeq 1_X$ and $fg\approxeq 1_Y$. $f$ is called a \emph{homeotopy equivalence from $X$ to $Y$}.
\end{definition}
Homeotopy type is a coarser relation than $PL$ homeomorphism. For example, the following two $2$-dimensional complexes are of the same homeotopy type but not $PL$ homeomorphic.\vspace{3mm}

\begin{center}
\setlength{\unitlength}{0.5mm}
\begin{picture}(200,40)(0,0)
\thicklines
\drawline(0,20)(40,20)
\drawline(40,20)(75,0)
\drawline(40,20)(75,40)
\drawline(75,0)(75,40)

\drawline(140,20)(175,0)
\drawline(140,20)(175,40)
\drawline(175,0)(175,40)

\put(-8,30){{$K$}}
\put(132,30){{$L$}}
\put(-2,15){{$v_0$}}
\put(36,15){{$v_1$}}
\put(77,0){{$v_2$}}
\put(77,36){{$v_3$}}
\put(134,15){{$w_1$}}
\put(177,0){{$w_2$}}
\put(177,36){{$w_3$}}
\end{picture}
\end{center}\vspace{3mm}

$f\colon K\to L$ is defined by $f(v_i)=w_i$ for $i=1,2,3$ and $f(v_0)=w_3$. $g\colon L\to K$ is the inclusion $g(w_i)=v_i$ for $i=1,2,3$. The homeotopy from $\mathbbold{1}_K$ to $gf$ is shown in the following picture, which is identity on the triangular prism and maps the left two $2$-simplices $1$ and $2$ respectively to the right two simplices $1$ and $2$.\vspace{3mm}

\begin{center}
\setlength{\unitlength}{0.5mm}
\begin{picture}(200,40)(0,0)
\thicklines
\drawline(0,0)(0,40)
\drawline(0,0)(40,0)
\drawline(0,40)(40,40)
\drawline(40,0)(40,40)
\drawline(0,0)(40,40)
\drawline(40,0)(60,-10)
\drawline(60,-10)(80,0)
\drawline(40,40)(60,30)
\drawline(60,30)(80,40)
\drawline(80,0)(80,40)
\drawline(40,0)(60,30)
\drawline(60,-10)(80,40)
\drawline(40,40)(80,40)
\drawline(60,-10)(60,30)
\thinlines
\dashline{4}(40,0)(80,0)
\dashline{4}(40,0)(80,40)

\put(10,30){1}
\put(30,10){2}

\thicklines
\drawline(100,0)(100,40)
\drawline(100,0)(140,0)
\drawline(100,40)(140,40)
\drawline(140,0)(140,40)
\drawline(100,0)(140,40)
\drawline(140,0)(160,-10)
\drawline(160,-10)(180,0)
\drawline(140,40)(160,30)
\drawline(160,30)(180,40)
\drawline(180,0)(180,40)
\drawline(140,0)(160,30)
\drawline(160,-10)(180,40)
\drawline(140,40)(180,40)
\drawline(160,-10)(160,30)
\thinlines
\dashline{4}(140,0)(180,0)
\dashline{4}(140,0)(180,40)

\put(158,34){1}
\put(145,20){2}

\put(-2,-5){{$v_0$}}
\put(36,-5){{$v_1$}}
\put(57,-15){{$v_2$}}
\put(77,-5){{$v_3$}}

\put(98,-5){{$v_0$}}
\put(136,-5){{$v_1$}}
\put(157,-15){{$v_2$}}
\put(177,-5){{$v_3$}}
\end{picture}
\end{center}
\vspace{10mm}

Homeotopy type is finer than homotopy type. For example, $D^n$ and $D^m$ ($m\neq n$) are not of the same homeotopy type since they have
different (co)homeology groups. But they are of the same homotopy type. Another example is $C_3S^n$ and $S^{n+1}\vee S^{n+1}$
(see the examples at the end of \S \ref{sec:2}).

\section{The geometric description of (co)homeology groups}\label{sec:4}

\begin{definition}\label{def:2}
A simplicial complex $K$ is said to be \emph{completely connected} if for any two simplices $\sigma, \sigma'$ having the same dimension, there is a sequence $\sigma_0,\sigma_1,\dots,\sigma_m$ of simplices of $K$ such that $\sigma_0=\sigma,\ \sigma_m=\sigma'$, $|\sigma_{i}|=|\sigma|$  and $\sigma_i$, $\sigma_{i+1}$ share a common facet for each $i$. A \emph{completely connected component of $K$} is a maximal completely connected subcomplex of $K$.
\end{definition}
Note that two different completely connected components may have nonempty intersection. For example, if $K=\Delta^2\vee\Delta^2$, then $K$ has two
completely connected components $K_1=\Delta^2\vee\partial\Delta^2$ and $K_2=\partial\Delta^2\vee\Delta^2$,
and $K_1\cap K_2=\partial\Delta^2\vee\partial\Delta^2$.
\begin{lemma}\label{lem:2}
If $K_1$ and $K_2$ are two different completely connected components of $K$, then
\[\mathrm{dim}(K_1\cap K_2)<\mathrm{min}\,\{\mathrm{dim}\,K_1,\, \mathrm{dim}\,K_2\}.\]
\end{lemma}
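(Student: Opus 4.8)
The plan is to argue by contradiction. Suppose $K_1,K_2$ are distinct completely connected components of $K$ with $n:=\dim(K_1\cap K_2)\geqslant \min\{\dim K_1,\dim K_2\}$; without loss of generality say $n\geqslant\dim K_1$, so in fact $n=\dim K_1$ and $K_1\cap K_2$ contains a simplex $\sigma$ of dimension $n=\dim K_1$. Such a $\sigma$ is then a facet of $K_1$ (being top-dimensional in $K_1$), and it also lies in $K_2$. The idea is to show that under these hypotheses $K_1$ and $K_2$ would have to coincide as completely connected subcomplexes, or more precisely that a strictly larger completely connected subcomplex containing $K_1$ could be built, contradicting maximality of $K_1$ (or of $K_2$).

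The key steps, in order, are as follows. First I would recall from Definition \ref{def:2} that a completely connected component is a \emph{maximal} completely connected subcomplex, and that complete connectedness is witnessed by chains of equidimensional simplices adjacent along common facets. Second, I would observe that since $\sigma$ is an $n$-simplex lying in both $K_1$ and $K_2$, any $n$-simplex $\tau\in K_2$ can be joined to $\sigma$ by a chain $\sigma=\sigma_0,\dots,\sigma_m=\tau$ of $n$-simplices of $K_2$ adjacent along common facets (this uses $\dim K_2\geqslant n$, which holds since $\dim K_2\geqslant\min\{\dim K_1,\dim K_2\}=\dim K_1=n$, and the definition of complete connectedness applied inside $K_2$). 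Third, I would consider the subcomplex $K'$ of $K$ generated by $K_1$ together with all $n$-simplices of $K_2$ and their faces; I claim $K'$ is completely connected. Indeed every $n$-simplex of $K'$ is either in $K_1$, hence chain-connected to $\sigma$ within $K_1\subset K'$, or an $n$-simplex of $K_2$, hence chain-connected to $\sigma$ within $K_2\subset K'$; concatenating through $\sigma$ shows any two $n$-simplices of $K'$ are chain-connected in $K'$, and for lower-dimensional simplices one uses that complete connectedness of $K_1$ already provides the required chains (every simplex of $K'$ of dimension $<n$ is a face of some $n$-simplex, and one reduces to the top-dimensional case, or more simply: $K_1$ itself already witnesses complete connectedness in each dimension $\leqslant n$, and the only new simplices added are faces of $n$-simplices of $K_2$, each of which is adjacent via $\sigma$). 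Fourth, since $K_2\not\subset K_1$ forces $K'\supsetneq K_1$ (there is an $n$-simplex of $K_2$ not in $K_1$, otherwise $K_2$, being at most $n$-dimensional and generated by its facets, would satisfy $K_2\subseteq K_1$, and a symmetric argument would give $K_1=K_2$), this contradicts the maximality of $K_1$. Hence the assumption $n\geqslant\min\{\dim K_1,\dim K_2\}$ is untenable, proving the strict inequality.

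The main obstacle I anticipate is the third step: verifying carefully that $K'$ is genuinely completely connected, in particular handling simplices of dimension strictly less than $n$ and making sure the "adjacency along a common facet" chains can indeed be routed through $\sigma$. The subtlety is that the definition requires, for \emph{every} pair of equidimensional simplices $\tau,\tau'$ in $K'$ (not just top-dimensional ones), a connecting chain lying in $K'$; one must check that adding the faces of the new $n$-simplices of $K_2$ does not create equidimensional pairs in some intermediate dimension that fail to be chain-connected within $K'$. I expect this to go through because any such face is a face of an $n$-simplex of $K_2$ which is chain-connected to $\sigma$, and $\sigma$'s faces are all in $K_1$, which is completely connected; but the bookkeeping (especially the edge case where $\dim K_1<\dim K_2$ strictly, so that $K_2$ has simplices of dimension $>n$ that must \emph{not} be thrown into $K'$) needs to be done with care. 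An alternative, possibly cleaner route that avoids building $K'$ explicitly is to directly show $K_2\subseteq K_1$ whenever $\dim K_1\geqslant\dim(K_1\cap K_2)\geqslant\dim K_1$: take any facet of $K_2$; if $\dim K_1=\dim K_2$ it has the same dimension as the shared facet $\sigma\in K_1\cap K_2$, and complete connectedness of the union-subcomplex (which is completely connected by the argument above) together with maximality of $K_1$ forces it into $K_1$; then $K_1=K_2$, contradicting distinctness, unless $\dim K_1<\dim K_2$, in which case $\dim(K_1\cap K_2)\leqslant\dim K_1<\dim K_2$ already, and we are reduced to showing $\dim(K_1\cap K_2)<\dim K_1$, i.e. the same contradiction with $\dim K_1=\dim K_2$ replaced by the observation that a top-dimensional simplex of $K_1$ in $K_2$ is an $n$-simplex of $K_2$ hence chain-connected to all $n$-simplices of $K_2$, etc.
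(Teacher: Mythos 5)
Your overall strategy --- derive a contradiction with the maximality clause in Definition \ref{def:2} by exhibiting a completely connected subcomplex strictly larger than $K_1$ --- is the same as the paper's, but the specific complex $K'$ you build (namely $K_1$ together with the $n$-simplices of $K_2$ and their faces) creates two genuine problems that the paper's choice avoids. First, your justification that $K'\supsetneq K_1$ assumes that if every $n$-simplex of $K_2$ lies in $K_1$ then $K_2\subseteq K_1$; this rests on $K_2$ being ``at most $n$-dimensional and generated by its facets,'' but under the standing hypothesis you only know $\dim K_2\geqslant n$, and even when $\dim K_2=n$ a completely connected complex need not be pure, so $K_2$ may contain low-dimensional simplices that are not faces of any $n$-simplex of $K_2$. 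In those situations $K'$ can equal $K_1$ and the contradiction evaporates. Second, verifying complete connectedness of $K'$ in a dimension $d<n$ requires chaining a $d$-face of an $n$-simplex of $K_2$ back to a $d$-face of $\sigma$ while staying inside $K'$; the chains supplied by complete connectedness of $K_2$ in dimension $d$ may pass through $d$-simplices of $K_2$ that are not faces of any $n$-simplex, hence not in $K'$, so you are forced into the ``equidimensional faces along a chain of $n$-simplices sharing $(n-1)$-faces are chain-connected'' argument that you only gesture at.

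Both problems disappear if you take $K'=K_1\cup K_2$, the option you consider and then back away from out of a misplaced worry about the simplices of $K_2$ of dimension $>n$. Those cause no harm: for $d>n$ every $d$-simplex of $K_1\cup K_2$ lies in $K_2$ and is chained there; for $d\leqslant n$ the $n$-simplex $\sigma\in K_1\cap K_2$ has a $d$-face $\tau_0$ lying in both $K_1$ and $K_2$, every $d$-simplex of $K_1$ chains to $\tau_0$ inside $K_1$, and every $d$-simplex of $K_2$ chains to $\tau_0$ inside $K_2$. Hence $K_1\cup K_2$ is completely connected, and maximality of $K_1$ and of $K_2$ forces $K_1=K_1\cup K_2=K_2$, contradicting distinctness. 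This is in substance the paper's (very terse) proof: it connects each simplex of $K_1$ through a same-dimensional simplex of $K_1\cap K_2$ to one of $K_2$, concludes $K_1\subset K_2$ from maximality of $K_2$, and then $K_1=K_2$ from maximality of $K_1$.
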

\begin{proof}
Suppose on the contrary that $\mathrm{dim}(K_1\cap K_2)=\mathrm{min}\,\{\mathrm{dim}\,K_1,\, \mathrm{dim}\,K_2\}$, and without loss of generality
$\mathrm{dim}(K_1\cap K_2)=\mathrm{dim}\,K_1$. Then by the definition of completely connected component, any simplex $\sigma\in K_1$ can be connected to a simplex $\tau\in K_1\cap K_2$ having the same dimension with $\sigma$, and so to a simplex $\tau'\in K_2$ with the same dimension. This implies that $K_1\subset K_2$, but from the maximality of completely connected components we obtain $K_1=K_2$, a contradiction.
\end{proof}
\begin{theorem}\label{th6}
Let $K$ be a simplicial complex. Then $\mathcal {H}^{n,n}(K)$ is free for $n\geqslant0$ and {\rm rank}\,$\mathcal {H}^{n,n}(K)$
equals the number of $n$-dimensional completely connected components of $K$.
\end{theorem}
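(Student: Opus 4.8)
The plan is to identify $\mathcal{H}^{n,n}(K)$ with the kernel of a single differential on the $E_1$-page of the cohomeology spectral sequence and then read that kernel off combinatorially. By Lemma~\ref{lem:1} we have $E_1^{n+1,n}(K)=\bigoplus_{|\sigma|=n+2}\widetilde H^{-2}(\mathrm{link}_K\sigma)=0$, since the reduced cohomology of any complex vanishes in degree $-2$; hence $\mathcal{H}^{n,n}(K)=E_2^{n,n}(K)=\ker\bigl(\Delta_1\colon E_1^{n,n}(K)\to E_1^{n-1,n}(K)\bigr)$. Again by Lemma~\ref{lem:1}, $E_1^{n,n}(K)=\bigoplus_{|\sigma|=n+1}\widetilde H^{-1}(\mathrm{link}_K\sigma)$, and $\widetilde H^{-1}(\mathrm{link}_K\sigma)\cong\mathbb Z$ exactly when $\mathrm{link}_K\sigma=\{\varnothing\}$, that is, when $\sigma$ is a facet of $K$, and is $0$ otherwise. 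So $E_1^{n,n}(K)$ is the free abelian group on the set $\mathcal F_n$ of $n$-dimensional facets of $K$; in particular $\mathcal{H}^{n,n}(K)$, being a subgroup of a free abelian group, is free, which proves the first assertion, and the task is to compute its rank.

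Next I would make $\Delta_1$ explicit. By Lemma~\ref{lem:1}, $\Delta_1$ sends the generator $e_\sigma$ attached to an $n$-facet $\sigma$ to $\pm\sum_{v\in\sigma}[\phi_v((\varnothing))]$, and for a facet $\sigma$ the cochain $\phi_v((\varnothing))=(v)$ is a cocycle representing the class of the isolated vertex $v$ in $\widetilde H^0(\mathrm{link}_K(\sigma\setminus\{v\}))$ --- isolated because $\sigma$ is maximal. Recall that for an $(n{-}1)$-simplex $\tau$ the group $\widetilde H^0(\mathrm{link}_K\tau)$ is the free abelian group on the connected components of $\mathrm{link}_K\tau$ modulo its ``all-ones'' element, that an $n$-simplex $\tau\cup\{v\}$ is a facet iff $v$ is isolated in $\mathrm{link}_K\tau$, and that $\mathrm{link}_K\tau$ contains an edge iff $\tau$ is contained in some $(n{+}1)$-simplex of $K$. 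Writing a chain as $\sum_\sigma a_\sigma e_\sigma$ and extracting the component of its image in each summand $\widetilde H^0(\mathrm{link}_K\tau)$, the equation $\Delta_1(\sum_\sigma a_\sigma e_\sigma)=0$ becomes, one condition per $(n{-}1)$-simplex $\tau$: if $\tau$ lies in no $(n{+}1)$-simplex then all coefficients $a_{\tau\cup\{v\}}$ over facets $\tau\cup\{v\}\supset\tau$ must agree, while if $\tau$ lies in some $(n{+}1)$-simplex then all such $a_{\tau\cup\{v\}}$ must be zero. Form the graph $\Gamma$ whose vertices are the $n$-facets of $K$, with an edge joining two facets that share an $(n{-}1)$-face contained in no $(n{+}1)$-simplex of $K$, and let $\mathcal F_n^{\circ}\subseteq\mathcal F_n$ be the facets none of whose $(n{-}1)$-faces lies in an $(n{+}1)$-simplex. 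The relations above say precisely that the function $a$ is constant on each connected component of $\Gamma$ and vanishes on $\mathcal F_n\setminus\mathcal F_n^{\circ}$. Consequently $\mathrm{rank}\,\mathcal{H}^{n,n}(K)$ equals the number of connected components of $\Gamma$ that lie entirely inside $\mathcal F_n^{\circ}$.

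It then remains to show that $L\mapsto\{\text{$n$-simplices of }L\}$ is a bijection from the set of $n$-dimensional completely connected components of $K$ onto the set of connected components of $\Gamma$ contained in $\mathcal F_n^{\circ}$. Three elementary facts drive this. (i) A gluing lemma: if $M$ is completely connected of dimension $n$ and $\rho$ is a simplex of $K$ of dimension $n$ or $n{+}1$ such that either some $n$-simplex of $M$ is a facet of $\rho$ or some $n$-simplex of $M$ meets $\rho$ in an $(n{-}1)$-simplex, then $M$ together with all faces of $\rho$ is completely connected; this is checked dimension by dimension, using that the $d$-faces of a fixed simplex form a connected graph under sharing common facets. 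From (i): a maximal completely connected subcomplex of dimension $n$ contains only facets of $K$, and for a $\Gamma$-component $C\subseteq\mathcal F_n^{\circ}$ the subcomplex $M_C$ of all faces of simplices of $C$ is completely connected of dimension $n$. (ii) A first-exit argument: in any completely connected $n$-dimensional $M\supseteq M_C$, following an adjacency path of $n$-simplices from one outside $C$ to one inside $C$, the first simplex leaving $C$ would be joined to one inside $C$ across an $(n{-}1)$-face which, being a face of a simplex of $C\subseteq\mathcal F_n^{\circ}$, lies in no $(n{+}1)$-simplex and hence is a $\Gamma$-edge --- impossible; so the set of $n$-simplices of such an $M$ is exactly $C$. (iii) The $n$-skeleton of a completely connected subcomplex is again completely connected. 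Putting these together, the union $L_C$ of all completely connected $n$-dimensional subcomplexes containing $M_C$ is completely connected, $n$-dimensional, has $n$-simplices exactly $C$, and is maximal among all completely connected subcomplexes of $K$ (a strictly larger completely connected $M'$ would, if $\dim M'=n$, be contained in $L_C$ by construction; and if $\dim M'>n$ its $n$-skeleton would contain $M_C$, hence be contained in $L_C$, forcing the $n$-faces of an $(n{+}1)$-simplex of $M'$ to lie in $C$ by (ii), contradicting $C\subseteq\mathcal F_n^{\circ}$). Conversely, for an $n$-dimensional completely connected component $L$, maximality with (i) forces its $n$-simplices into $\mathcal F_n^{\circ}$, complete connectedness with (ii) forces them to be a full component of $\Gamma$, and Lemma~\ref{lem:2} ensures distinct such $L$ produce disjoint sets of $n$-simplices. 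This gives the bijection, hence the asserted value of $\mathrm{rank}\,\mathcal{H}^{n,n}(K)$.

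The step I expect to be the main obstacle is the surjectivity in the last paragraph: a connected component $C$ of $\Gamma$ inside $\mathcal F_n^{\circ}$ does not in general fill out a maximal completely connected subcomplex directly --- the complex $M_C$ can be enlarged in many incompatible-looking ways (as already happens for two triangles glued at a single vertex, where $M_C$ is a filled triangle but the genuine component is that triangle together with a path of edges) --- and one must guarantee that no such enlargement raises the dimension beyond $n$. It is precisely the interplay of the skeleton-preservation fact (iii) with the first-exit lemma (ii), which pins the top-dimensional simplices down to $C$ no matter how $M_C$ grows, that makes this work; everything in the first two paragraphs is routine bookkeeping once the explicit form of $\Delta_1$ from Lemma~\ref{lem:1} is available.
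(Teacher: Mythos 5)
Your argument is correct, but it is organized quite differently from the paper's. The paper first splits $K$ into its completely connected components, invokes Lemma~\ref{lem:2} to decompose the complex $(E_1^{*,n},\Delta_1)$ as a direct sum over the $n$-dimensional components $K_i$ plus the union $K'$ of the higher-dimensional ones, and then computes each piece separately: $\mathcal{H}^{n,n}(K_i)=\mathbb{Z}$ by propagating the relation $n_{\sigma_1}=n_{\sigma_2}$ along adjacency paths and checking that $\sum_\sigma\sigma\otimes\sigma$ is a $\Delta_1$-cocycle, and $\mathcal{H}^{n,n}(K')=0$ by propagating to an $n$-simplex with nonempty link. You instead compute $\ker\Delta_1^{n,n}$ globally and combinatorially (locally constant functions on your auxiliary graph $\Gamma$ that vanish off $\mathcal{F}_n^{\circ}$ --- the per-$\tau$ conditions you extract are exactly the paper's propagation relations, stated once and for all) and then prove, as a separate purely combinatorial statement, that $\Gamma$-components inside $\mathcal{F}_n^{\circ}$ biject with $n$-dimensional completely connected components. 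Your route trades a short spectral-sequence bookkeeping step for a longer combinatorial verification, but it makes explicit two points the paper passes over quickly: that the direct-sum decomposition of $E_1^{*,n}$ over components is legitimate, and --- via your surjectivity/maximality argument with the skeleton and first-exit lemmas --- that every candidate component really is maximal and really has dimension exactly $n$. One small caveat: your ``all coefficients agree'' condition takes the formula of Lemma~\ref{lem:1} at face value, with a uniform sign across the terms $\phi_v$; this matches the paper's own use of that lemma, so it is not a gap, but if the identifications in Lemma~\ref{lem:1} carried per-vertex signs one would have to rule out sign obstructions around cycles of $\Gamma$.
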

\begin{proof}
Since $E^{n,n}_{\,1}(K)=\bigoplus_{|\sigma|=n+1}\w H^{-1}(\text{link}_K \sigma)$ is free for $n\geqslant 0$ and
$\mathcal {H}^{n,n}(K)=\text{Ker}\,\Delta_1^{n,n}$,
then we deduce the first assertion from the fact that every subgroup of a free abelian group is free.

To prove the second assertion, suppose $\{K_i\}$ is the set of completely connected components of $K$.  Let $K'=\bigcup_{\mathrm{dim}K_i>n}K_i$,
then there is a commutative diagram:
\[\begin{CD}
E^{n,n}_1(K)@>\cong>>E^{n,n}_1(K')\oplus\big(\bigoplus_{\mathrm{dim}K_i=n}E^{n,n}_1(K_i)\big)\\
@V\Delta_1VV @V\oplus\Delta_1VV\\
E^{n-1,n}_1(K)@>\cong>>E^{n-1,n}_1(K')\oplus\big(\bigoplus_{\mathrm{dim}K_i=n}E^{n-1,n}_1(K_i)\big)
\end{CD}\]
where the horizontal isomorphisms come from Lemma \ref{lem:1} and Lemma \ref{lem:2}. Thus we have
\[\mathcal {H}^{n,n}(K)\cong \mathcal {H}^{n,n}(K')\oplus\big(\bigoplus_{\mathrm{dim}K_i=n}\mathcal {H}^{n,n}(K_i)\big)\]
If we prove that $\mathcal {H}^{n,n}(K')=0$ and $\mathcal {H}^{n,n}(K_i)=\mathbb{Z}$ for each $K_i$ satisfying $\mathrm{dim}K_i=n$,
then the theorem holds.

First we prove $\mathcal {H}^{n,n}(K_i)=\mathbb{Z}$ if $\mathrm{dim}K_i=n$. Suppose $x\in \mathcal {H}^{n,n}(K_i)$, then a representative $\xi$ of $x$ in $F^n/F^{n-1}$ ($F^p=\bigoplus_{j\leqslant p}N^{j,*}(K_i)$) has the form \[\xi=\sum_{|\sigma|=n+1}n_\sigma\cdot(\sigma\otimes\sigma),\quad n_\sigma\in \mathbb{Z}.\]
By definition, for arbitrary two $n$-simplices $\sigma',\sigma''\in K_i$, there is a sequence $\sigma'=\sigma_0,\sigma_1,\dots,\sigma_m=\sigma''$ of $n$-simplices
such that $\sigma_j, \sigma_{j+1}$ share a common $(n-1)$-simplex $\tau_j$ for $0\leqslant j<m$. Let \[p_{\tau_i}\colon E^{n-1,n}_1(K_i)=\bigoplus_{|\tau|=n}\w H^0(\text{link}_{K_i} \tau)\to\w H^0(\text{link}_{K_i} \tau_j)\]
be the projection map.
Then $p_{\tau_1}\Delta_1(\xi)=\sum_\sigma n_\sigma\cdot(\tau_1\otimes\sigma)=0$, where the summation is taken over all $n$-chain simplices $\sigma$
such that $\sigma=\tau_1*(v)$ for some vertex $v$. Since  in $\w H^0(\text{link}_{K_i}\tau_1)$ the zero class is represented by
$k\cdot\sum_{v}(v)$, where $k\in\mathbb{Z}$ and the summation is taken over all vertices of $\text{link}_{K_i}\tau_1$,
then from the correspondence $\tau_1\otimes(\tau_1*(v))\mapsto(v)$ given in the proof of Lemma \ref{lem:1} we have $n_{\sigma_1}=n_{\sigma_2}$. Repeating this process gives $n_{\sigma'}=n_{\sigma''}$.
From the arbitrariness of $\sigma'$ and $\sigma''$, we may rewrite $\xi$ as
\[\xi=k\cdot\sum_{|\sigma|=n+1}\sigma\otimes\sigma,\quad k\in\mathbb{Z}.\]
On the other hand, since $\mathrm{dim}K_i=n$, then $\xi_0=\sum_{|\sigma|=n+1}\sigma\otimes\sigma$ is a cocycle (evidently not a coboundary) of $F^{p}/F^{p-1}$. Thus $[\xi_0]\in E^{n,n}_1(K_i)$. Notice that $p_\tau\Delta_1([\xi_0])=0$ for all $(n-1)$-simplices $\tau\in K_i$,
then from the fact that
$\Delta_1=\bigoplus_{|\tau|=n}p_\tau\Delta_1$ we have $\Delta_1([\xi_0])=0$. Hence $\mathcal {H}^{n,n}(K_i)=\mathbb{Z}$ with a generator $[\xi_0]$.

Now we prove $\mathcal {H}^{n,n}(K')=0$. Suppose $x\in \mathcal {H}^{n,n}(K')$, then as above $x$ has a representative  \[\xi=\sum_{|\sigma|=n+1}n_\sigma\cdot(\sigma\otimes\sigma),\quad n_\sigma\in \mathbb{Z},\] where $n_\sigma=0$ if $\text{link}_{K'}\sigma\neq\varnothing$. For any $n$-simplex $\sigma'\in K_j$ with $\mathrm{dim}K_j>n$, there is a sequence $\sigma'=\sigma_0,\sigma_1,\dots,\sigma_m=\sigma''$ of $n$-simplices of $K_j$ such that
$\sigma''\in \partial\tau$ for a $(n+1)$-simplex $\tau\in K_j$.
By the preceding analysis we have $n_{\sigma'}=n_{\sigma''}$. However, $\text{link}_{K'}\sigma''\neq\varnothing$, so $n_{\sigma''}=0$. From the arbitrariness of $\sigma'$ and the construction of $K'$, we have $\xi=0$, and then $\mathcal {H}^{n,n}(K')=0$.
\end{proof}

\begin{lemma}\label{le1}
Let $K$ be a simplicial complex. Then:
\begin{enumerate}[(a)]

\item $\mathcal {H}^{*,\,q}(K)\cong \mathcal {H}^{*,\,q}(K^{(n)})$ for $0\leqslant q<n$.

\item $E^{p,\,p}_{\,r}(K)\cong E^{p,\,p}_{\,r}(K^{(n)})$ for $r>1$ and $0\leqslant p<n$;\\
$E^{p,\,q}_{r}(K)\cong E^{p,\,q}_{r}(K^{(n)})$ for $r>1$ and
$0\leqslant p<q\leqslant n-(r-1)(q-p)$.
\end{enumerate}

These isomorphisms are all induced by corresponding inclusions (e.g., (a) is induced by the inclusion $K^{(n)}\hookrightarrow K$).
\end{lemma}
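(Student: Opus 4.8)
The plan is to compare the two cohomeology spectral sequences through the inclusion $\iota\colon K^{(n)}\hookrightarrow K$, which is a non-degenerate simplicial map and hence, by Theorem \ref{thm:2}, induces a morphism of spectral sequences $\iota^*\colon E_r^{*,*}(K)\to E_r^{*,*}(K^{(n)})$; on the double complex $\iota^*\colon N^{*,*}(K)\to N^{*,*}(K^{(n)})$ is simply the (filtration preserving, surjective) projection that kills every generator $\sigma\otimes\tau$ with $\dim\tau>n$. The whole argument rests on a single computation on the $E_1$-page. Using the description of $E_1$ in Lemma \ref{lem:1} together with the identity $\mathrm{link}_{K^{(n)}}\sigma=(\mathrm{link}_K\sigma)^{(n-|\sigma|)}$, the map $\iota^*$ on $E_1^{p,q}$ becomes the direct sum over $|\sigma|=p+1$ of the restriction maps $\widetilde H^{q-p-1}(\mathrm{link}_K\sigma)\to\widetilde H^{q-p-1}((\mathrm{link}_K\sigma)^{(n-p-1)})$. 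Since restriction to the $m$-skeleton of a complex is an isomorphism on $\widetilde H^i$ for $i<m$ (the kernel of $\widetilde C^*(L)\to\widetilde C^*(L^{(m)})$ lives in degrees $>m$), applying this with $i=q-p-1$ and $m=n-p-1$ shows that $\iota^*\colon E_1^{p,q}(K)\to E_1^{p,q}(K^{(n)})$ is an isomorphism for every $p$ as soon as $q\leqslant n-1$. I also record that $E_r^{p,q}=0$, for both $K$ and $K^{(n)}$, whenever $q<p$ or $p<0$.

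Part (a) then follows at once: $\Delta_1$ preserves $q$, so for each fixed $q\leqslant n-1$ the map $\iota^*$ is an isomorphism of cochain complexes $(E_1^{*,q}(K),\Delta_1)\xrightarrow{\ \cong\ }(E_1^{*,q}(K^{(n)}),\Delta_1)$; passing to homology and recalling $\mathcal H^{p,q}=E_2^{p,q}$ gives $\mathcal H^{*,q}(K)\cong\mathcal H^{*,q}(K^{(n)})$ for $0\leqslant q<n$, induced by $\iota$.

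For part (b) I would induct on $r\geqslant 2$ over the statement that $\iota^*\colon E_r^{p,q}(K)\to E_r^{p,q}(K^{(n)})$ is an isomorphism for every $(p,q)$ in the region $\mathcal R_r$ consisting of the diagonal points $0\leqslant p=q<n$ together with the off-diagonal points $0\leqslant p<q$ with $q\leqslant n-(r-1)(q-p)$; the two pieces of $\mathcal R_r$ are precisely the two assertions of (b). The base case $r=2$ is covered by part (a), since a direct check gives $\mathcal R_2\subset\{q\leqslant n-1\}$. For the inductive step, $E_r^{p,q}$ is the homology of $(E_{r-1}^{*,*},\Delta_{r-1})$ at $(p,q)$, where $\Delta_{r-1}$ sends $(p,q)\mapsto(p-r+1,q-r+2)$; by the standard five-lemma comparison it suffices that, on the $(r-1)$st page, $\iota^*$ be an isomorphism at $(p,q)$, a monomorphism at the target $(p-r+1,q-r+2)$, and an epimorphism at the source $(p+r-1,q+r-2)$. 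I would then verify by a short arithmetic argument that, for $(p,q)\in\mathcal R_r$, each of these three points either lies again in $\mathcal R_{r-1}$ — in fact with the quantity $q+(r-2)(q-p)$ bounded above by $n-1$, so that the induction hypothesis supplies an honest isomorphism — or is one of the a priori vanishing groups ($q<p$ or $p<0$; on the diagonal the source of the incoming differential is $E_{r-1}^{p+r-1,\,p+r-2}$, which vanishes because its second index is smaller than its first, which is why the diagonal only needs $p<n$).

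The core of the proof, and essentially its only genuine difficulty, is this bookkeeping: one must watch how the three index shifts of $\Delta_{r-1}$ interact with the defining inequality of $\mathcal R_r$, observing that the inequality degrades by exactly one unit of $q-p$ and therefore keeps all the shifted points inside $\mathcal R_{r-1}$. The most delicate boundary case is $q=p+1$, where the source $(p+r-1,q+r-2)$ of the incoming differential lands on the diagonal, so one must invoke precisely the diagonal part of the induction hypothesis, using $p+r-1\leqslant n-1$, which is exactly what $(p,p+1)\in\mathcal R_r$ provides.
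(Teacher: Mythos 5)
Your proposal is correct and follows essentially the same route as the paper: both establish the $E_1$-level isomorphism for $q\leqslant n-1$ via $\mathrm{link}_{K^{(n)}}\sigma=(\mathrm{link}_K\sigma)^{(n-|\sigma|)}$, deduce (a) since $\Delta_1$ preserves $q$, and prove (b) by induction on $r$ using the three-term comparison $E_r^{p+r,\,q+r-1}\to E_r^{p,\,q}\to E_r^{p-r,\,q-r+1}$. Your bookkeeping of the region $\mathcal{R}_r$ and of which outer terms need only be mono/epi (or vanish for $q<p$) is in fact more careful than the paper's, which simply asserts all three vertical maps are isomorphisms.
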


\begin{proof}
(a) According to Lemma \ref{lem:1}, $E_{\,1}^{p,\,q}(K)=\bigoplus_{|\sigma|=p+1}\,\w H^{q-p-1}({\rm link}_K\sigma)$. An easy observation shows that the inclusion $K^{(n)}\hookrightarrow K$ induces isomorphisms \[\w H^{q-p-1}(\text{link}_K\sigma)\cong\w H^{q-p-1}(\text{link}_{K^{(n)}}\sigma)\] for all $\sigma\in K$ with $|\sigma|=p+1\leqslant q+1\leqslant n$.
Thus the fact that $\mathcal {H}^{*,\,q}(K)$ is the homology of $(E_{\,1}^{*,\,q}(K),\,\Delta_1)$ gives the desired formula.

(b) We prove this by induction on $r$, starting with the case $r=2$ which is just a special case of (a). Assuming inductively there are two isomorphisms
$E^{p,\,p}_{r}(K)\cong E^{p,\,p}_{r}(K^{(n)})$ and $E^{p-r,\,p-r+1}_{r}(K)\cong E^{p-r,\,p-r+1}_{r}(K^{(n)})$ for $p<n$ induced by the inclusion $K^{(n)}\hookrightarrow K$, then we obtain $E^{p,\,p}_{r+1}(K)\cong E^{p,\,p}_{r+1}(K^{(n)})$ from the fact $E^{p,\,p}_{r+1}(K)=\mathrm{Ker}\Delta_r^{p,\,p}$.
To get the second isomorphism consider the following commutative diagram induced by  $K^{(n)}\hookrightarrow K$ for $p<q\leqslant n-(r-1)(q-p)$
\[
\begin{CD}
E^{p+r,\,q+r-1}_{r}(K)@>\Delta_{r}>>E^{p,\,q}_{r}(K)@>\Delta_{r}>>E^{p-r,\,q-r+1}_{r}(K)\\
@VV\cong V @VV\cong V @VV\cong V\\
E^{p+r,\,q+r-1}_{r}(K^{(n)})@>\Delta_{r}>>E^{p,\,q}_{r}(K^{(n)})@>\Delta_{r-1}>>E^{p-r,\,q-r+1}_{r}(K^{(n)})
\end{CD}
\]
where the vertical isomorphisms come from induction.
Then the fact $E_{r+1}=H^*(E_r,\Delta_r)$ implies that $E^{p,\,q}_{r+1}(K)\cong E^{p,\,q}_{r+1}(K^{(n)})$, finishing the induction step.
\end{proof}
\begin{lemma}\label{lem:3}
Let $K$ be an $n$ dimensional complex with $m$ path-components, then
\[
E_\infty^{p,\,p}(K)=
\begin{cases}
\mathbb{Z}^m&\quad \text{ if } p=n,\\
0&\quad \text{ otherwise. }
\end{cases}
\]
\end{lemma}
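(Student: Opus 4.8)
The plan is to run the cohomeology spectral sequence ``on the diagonal'', using only two inputs: the description of the groups $E_1^{p,p}(K)$ that follows from Lemma \ref{lem:1}, and the convergence of the spectral sequence to $H^*(K)$ provided by Proposition \ref{prop:1}. (I will use that $K$ is pure of dimension $n$; some such hypothesis is needed, since for a non-pure complex such as $\Delta^2\sqcup\{\mathrm{pt}\}$ one computes $E_\infty^{0,0}\neq 0$.)

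First I would identify $E_1^{p,p}(K)$. By Lemma \ref{lem:1}, $E_1^{p,p}(K)\cong\bigoplus_{|\sigma|=p+1}\w H^{-1}({\rm link}_K\sigma)$. For a simplicial complex $L$ one has $\w H^{-1}(L)\cong\mathbb{Z}$ when $L=\{\varnothing\}$ and $\w H^{-1}(L)=0$ otherwise, while ${\rm link}_K\sigma=\{\varnothing\}$ exactly when $\sigma$ is a facet of $K$. Hence $E_1^{p,p}(K)$ is the free abelian group on the $p$-dimensional facets of $K$. Since $K$ is pure of dimension $n$, every facet has dimension $n$, so $E_1^{p,p}(K)=0$ for all $p\neq n$; as $E_{r+1}^{p,p}$ is a subquotient of $E_r^{p,p}$, this already gives $E_\infty^{p,p}(K)=0$ for $p\neq n$, which is the second clause of the lemma. (It is also reassuring to note that the differentials into $E_r^{n,n}$ emanate from $E_r^{n+r,\,n+r-1}$, a subquotient of $\bigoplus_{|\sigma|=n+r+1}\w H^{-2}({\rm link}_K\sigma)=0$, so $E_\infty^{n,n}(K)$ is actually a subgroup of the free group $E_1^{n,n}(K)$.)

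For $p=n$ I would invoke convergence. The filtration $F^p=\bigoplus_{i\leqslant p}N^{i,*}(K)$ of $T^*(N(K))$ is bounded, with $F^{-1}=0$ and $F^{n}=T^*(N(K))$, so by Proposition \ref{prop:1} the cohomeology spectral sequence converges to $H^*(K)$; in total degree $0$ this means $E_\infty^{p,p}(K)\cong F^pH^0(K)/F^{p-1}H^0(K)$ for the bounded filtration $0=F^{-1}H^0(K)\subseteq\cdots\subseteq F^{n}H^0(K)=H^0(K)$. By the previous step each successive quotient with $p<n$ vanishes, hence $F^{n-1}H^0(K)=0$ and therefore $E_\infty^{n,n}(K)\cong H^0(K)$. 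Finally $H^0(K)\cong\mathbb{Z}^{m}$ because $K$ has $m$ path-components, which completes the argument. Granting purity there is no hard step here, only bookkeeping; the genuine subtlety is that hypothesis itself — without it $F^{n-1}H^0(K)$ may be nonzero and the conclusion weakens to $\bigoplus_p E_\infty^{p,p}(K)\cong\mathbb{Z}^m$.
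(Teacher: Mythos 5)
Your argument is correct as written (granting the purity hypothesis you add), but it takes a genuinely different route from the paper's, and your preliminary remark that the lemma is false as literally stated is a real catch. The paper instead works with the explicit element $\lambda=\sum_{\sigma\in K}(-1)^{[(|\sigma|-1)/2]}\,\sigma\otimes\sigma$: it verifies $\Delta(\lambda)=0$ by a direct sign computation, observes that $\lambda$ lies in filtration degree $n$ and hence represents a nonvanishing class in $E_r^{n,n}(K)$ for every $r$, and then, for connected $K$, uses $H^0\bigl(T^*(N(K))\bigr)\cong H^0(K)\cong\mathbb{Z}$ together with $H^0=\bigoplus_p E_\infty^{p,p}$ to force the remaining diagonal terms to vanish; the general case is then obtained from the disjoint-union decomposition of Theorem \ref{thm:5}(a). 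Note the trade-offs. The paper's connected-case argument also covers connected complexes that are \emph{not} pure -- for such $K$ your first step fails, since $E_1^{p,p}(K)$ is free on the $p$-dimensional facets and need not vanish for $p<n$ even though $E_\infty^{p,p}$ still does -- so the minimal correct hypothesis is that every path-component of $K$ has dimension $n$, which is weaker than purity. Conversely, the paper's reduction to the connected case is exactly where your counterexample $\Delta^2\sqcup\{\mathrm{pt}\}$ bites: summing the connected-case conclusion over components of different dimensions does not yield the displayed formula, so some hypothesis of the kind you impose really is missing from the statement. Your route buys a short, computation-free argument; the paper's buys the non-pure connected case at the cost of producing and checking an explicit cocycle. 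Your parenthetical observation that $E_\infty^{n,n}$ receives no differentials (their sources being subquotients of $\bigoplus\widetilde H^{-2}$ of links) and is therefore a free subgroup of $E_1^{n,n}$ is also the cleanest way to see why the paper's class $[\lambda]$ survives.
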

\begin{proof}
First we do the case $K$ is path-connected. Set \[\lambda=\sum_{\sigma\in K}(-1)^{[\frac{|\sigma|-1}{2}]}\sigma\otimes\sigma\in N^{*,*}(K),\] where $[\cdot]$ denotes the integer part. Then
\begin{align*}\Delta(\lambda)&=\sum_{\sigma\in K}(-1)^{[\frac{|\sigma|-1}{2}]}d\otimes1(\sigma\otimes\sigma)+\sum_{\sigma\in K}(-1)^{|\sigma|+[\frac{|\sigma|-1}{2}]}1\otimes \delta(\sigma\otimes\sigma)\\
&=\sum_{\substack{\sigma\in K,\\(v)\in K}}(-1)^{[\frac{|\sigma|}{2}]}\sigma\otimes(v)*\sigma+\sum_{\substack{\sigma\in K,\\(v)\in K}}(-1)^{|\sigma|+[\frac{|\sigma|-1}{2}]}\sigma\otimes(v)*\sigma
\end{align*}
Since $[\frac{|\sigma|-1}{2}]+[\frac{|\sigma|}{2}]=|\sigma|-1$, then $\Delta(\lambda)=0$. On the other hand, $\lambda$ is evidently not a cocycle of $(N^{*,*}(K),\Delta)$. Thus $\lambda$ is a generator of
$H^0\big(T^*(N(K))\big)=H^0(K)=\mathbb{Z}$, and so $\lambda$ must survive to $E_\infty^{*,*}(K)$.
If dim$K=n$, then $\lambda$ represents an element of $E_r^{n,n}(K)$ for $r\geqslant 1$. The statement of the lemma follows from the fact that
\[H^0\big(T^*(N(K))\big)=\bigoplus_{p\geqslant0}E_\infty^{p,p}(K).\]

The general case follows by Theorem \ref{thm:5} (a).
\end{proof}
\begin{theorem}\label{th8}
Let $K$ be a simplicial complex, $\{K_i\}$ be the set of completely connected components of $K$, $L_n=\bigcup_{\mathrm{dim}K_i>n}K_i$. Then:
\begin{enumerate}[(a)]
\item $\mathcal {H}^{n-1,n}(K)$ is free for $n\geqslant1$.
\item $\mathcal {H}^{n-1,n}(K)\cong \mathcal {H}^{n-1,n}(L_n)\oplus\big(\bigoplus_{\mathrm{dim}K_i=n}\mathcal {H}^{n-1,n}(K_i)\big)$.
\item If $L_n$ has $m_1$  completely connected components and $L^{(n)}_n$ has $m_2$ completely connected components,
then \[{\rm rank}\,\mathcal {H}^{n-1,n}(L_n)\geqslant m_1-m_2.\]
\end{enumerate}
\end{theorem}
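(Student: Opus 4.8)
The plan is to treat the three parts separately: (a) and (b) are spectral‑sequence computations modelled on the proof of Theorem~\ref{th6}, while (c) is a rank count and is the real difficulty. For (a), I would show that the freeness of $\mathcal{H}^{n-1,n}(K)=E_2^{n-1,n}(K)$ is equivalent to a purity statement. By Lemma~\ref{lem:1} the three $E_1$‑terms entering the computation of $E_2^{n-1,n}$ are $E_1^{n,n}(K)=\bigoplus_{|\mu|=n+1}\w H^{-1}(\operatorname{link}_K\mu)$, $E_1^{n-1,n}(K)=\bigoplus_{|\sigma|=n}\w H^{0}(\operatorname{link}_K\sigma)$ and $E_1^{n-2,n}(K)=\bigoplus_{|\tau|=n-1}\w H^{1}(\operatorname{link}_K\tau)$, and all three are free abelian; the only non‑formal point is that $\w H^1$ of a finite complex is free, by the universal coefficient theorem ($\mathrm{Ext}(H_0,\zz)=0$ and $\mathrm{Hom}(H_1,\zz)$ torsion free). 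Hence $\operatorname{im}(\Delta_1\colon E_1^{n-1,n}\to E_1^{n-2,n})$ is free, so $\ker\Delta_1^{n-1,n}$ is a direct summand of $E_1^{n-1,n}$, and $\mathcal{H}^{n-1,n}(K)=\ker\Delta_1^{n-1,n}/\operatorname{im}\Delta_1^{n,n}$ is free precisely when $\operatorname{im}(\Delta_1\colon E_1^{n,n}\to E_1^{n-1,n})$ is a pure subgroup of $E_1^{n-1,n}$. That purity I would read off the formula in Lemma~\ref{lem:1}: an $n$‑facet $\mu$ contributes $\Delta_1[\mu]=\pm\sum_{v\in\mu}[(v)]$, where $[(v)]\in\w H^{0}(\operatorname{link}_K(\mu\setminus v))$ is the class of the vertex $v$, which is necessarily isolated in that link because $\mu$ is a facet; inside each summand the classes of isolated vertices span a pure subgroup, and a short combinatorial inspection of how the faces $\mu\setminus v$ overlap shows that the whole image is pure.

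For (b), I would reproduce the argument of Theorem~\ref{th6} essentially word for word, now keeping track of the three bidegrees $(n,n)$, $(n-1,n)$, $(n-2,n)$ rather than the two used there. Write $K'=L_n$. Lemma~\ref{lem:2} implies both that an $n$‑simplex lies in at most one completely connected component of dimension $\ge n$, and that if a simplex $\tau$ lies in several completely connected components all of dimension $\ge|\tau|$ then the links $\operatorname{link}_{K_i}\tau$ are pairwise disjoint; consequently the $n$‑facets of $K$ and the links of $(n-1)$‑ and $(n-2)$‑simplices appearing in these bidegrees split as a direct sum over $L_n$ and the components $K_i$ with $\dim K_i=n$, and the differential $\Delta_1$ respects this splitting. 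Taking $\Delta_1$‑homology of
\[
E_1^{p,n}(K)\cong E_1^{p,n}(L_n)\oplus\bigoplus_{\dim K_i=n}E_1^{p,n}(K_i),\qquad p=n,\,n-1,\,n-2,
\]
yields (b).

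For (c) the starting observations are structural. The completely connected components of $L_n$ are exactly the $K_i$ with $\dim K_i>n$ (maximality of $K_i$ in $K$ forces maximality of $K_i$ in $L_n$), so $m_1$ is their number; since none of them is $n$‑dimensional, Theorem~\ref{th6} gives $\mathcal{H}^{n,n}(L_n)=0$, i.e.\ $\Delta_1\colon E_1^{n,n}(L_n)\hookrightarrow E_1^{n-1,n}(L_n)$ is injective, whence $\operatorname{rank}\mathcal{H}^{n-1,n}(L_n)=\operatorname{rank}\ker\Delta_1^{n-1,n}(L_n)-\operatorname{rank}E_1^{n,n}(L_n)$. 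On the other hand $L_n^{(n)}=\bigcup_i K_i^{(n)}$ is $n$‑dimensional, and each $K_i^{(n)}$ is still completely connected (passing to the $n$‑skeleton removes no simplex of dimension $\le n$, hence no connecting chain in those dimensions) and $n$‑dimensional; so every completely connected component of $L_n^{(n)}$ has dimension $n$, and Theorem~\ref{th6} gives $\operatorname{rank}\mathcal{H}^{n,n}(L_n^{(n)})=m_2$. The plan is then to compare the $q=n$ columns of the two cohomeology spectral sequences via the chain map $\iota^*$ induced by the inclusion $L_n^{(n)}\hookrightarrow L_n$ (which is injective on $E_1^{n,n}$, $E_1^{n-1,n}$ and $E_1^{n-2,n}$, since restriction is injective on $\w H^{-1}$, $\w H^{0}$ and $\w H^{1}$ of the relevant links), to identify $\mathcal{H}^{*,q}$ for $q<n$ on the two sides by Lemma~\ref{le1}(a), and to pin down the diagonal terms $E_\infty^{p,p}(L_n^{(n)})$ by Lemma~\ref{lem:3} (they vanish for $p<n$ and equal $\zz^{c}$ for $p=n$, where $c$ is the number of path‑components of $L_n^{(n)}$). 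The point one wants to extract is that $L_n^{(n)}$ amalgamates the $m_1$ pieces of $L_n$ into only $m_2$ pieces, and each of these $m_1-m_2$ amalgamations must be accounted for by an independent class in $\ker\Delta_1^{n-1,n}(L_n)$ over and above the $\operatorname{rank}E_1^{n,n}(L_n)$ classes already used up by $\operatorname{im}\Delta_1^{n,n}(L_n)$; this gives $\operatorname{rank}\ker\Delta_1^{n-1,n}(L_n)\ge\operatorname{rank}E_1^{n,n}(L_n)+m_1-m_2$ and hence (c). The main obstacle is precisely this last inequality: making the amalgamation count visible inside $\ker\Delta_1^{n-1,n}(L_n)$ in a way insensitive to the (possibly very large) part of $L_n$ in dimensions $>n$, together with the bookkeeping needed for any completely connected components that fall outside the clean picture above.
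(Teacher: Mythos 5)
Your part (b) follows the paper's intent (the paper itself only says ``cf.\ the analysis in the proof of Theorem~\ref{th6}''), but there are genuine gaps in (a) and, more seriously, in (c).

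In (a) you correctly reduce freeness of $\mathcal{H}^{n-1,n}(K)$ to the purity of $\mathrm{im}\,\Delta_1^{n,n}$ inside $E_1^{n-1,n}(K)$, but the ``short combinatorial inspection'' you defer to is exactly the hard point, and it is not short. The classes $[(v)]\in\w H^{0}(\mathrm{link}_K(\mu\setminus v))$ contributed by different facets $\mu$ interact inside a single summand $\w H^{0}(\mathrm{link}_K\sigma)$, where the relation ``sum of all vertex classes $=0$'' lives, and these overlaps are precisely where divisibility could appear. The paper handles this by encoding the facet/codimension-one-face incidences as a graph $\mathcal{G}^n(K)$, factoring $\Delta_1^{n,n}$ through $\delta\colon C^0(S)\to C^1(\mathcal{G}^n(K))$ followed by the quotient $H^0\to\w H^{0}$ on the links, and proving a separate, nontrivial lemma (Lemma~\ref{le2}) that $\delta(C^0(S'))$ is a direct summand of $C^1(\mathcal{G})$ for any vertex subset $S'$. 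You need some substitute for that lemma; without it the purity claim is unsupported.

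For (c) you have assembled the right ingredients ($L_n^{(n)}$, Lemma~\ref{le1}, Lemma~\ref{lem:3}, Theorem~\ref{th6}) but you stop at the key inequality and, moreover, your framing points in a direction that does not obviously work: you look for the $m_1-m_2$ extra classes directly inside $\ker\Delta_1^{n-1,n}(L_n)$, i.e.\ on the $E_1$ page. The paper's mechanism is different and is the essential missing idea: after reducing to $\dim L_n=n+1$ (Lemma~\ref{le1}(a)) and to the case $m_2=1$ (by splitting $L_n$ into the pieces $M_j=\bigcup_{K_i^{(n)}=\Gamma_j}K_i$, which decompose $\mathcal{H}^{n-1,n}$ as a direct sum by Lemma~\ref{lem:2}), one observes that $\mathcal{H}^{n+1,n+1}(L_n)=\zz^{m_1}$ (Theorem~\ref{th6}) must collapse to $E_\infty^{n+1,n+1}(L_n)=\zz$ (Lemma~\ref{lem:3}), and then proves that the differentials $\Delta_r$ out of $E_r^{n+1,n+1}(L_n)$ vanish for all $r>2$ --- this vanishing is obtained by restricting a putative nonzero class to $L_n^{(n)}$ via $f^*$ and contradicting the survival of $\mathcal{H}^{n,n}(L_n^{(n)})=\zz$ to $E_\infty$, using the isomorphisms of Lemma~\ref{le1}(b). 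Consequently a subgroup of rank $m_1-1$ must be killed by $\Delta_2\colon\mathcal{H}^{n+1,n+1}(L_n)\to\mathcal{H}^{n-1,n}(L_n)$, forcing $\mathrm{rank}\,\mathcal{H}^{n-1,n}(L_n)\geqslant m_1-1$ in that case. Without this ``the extra diagonal classes must die, and $\Delta_2$ is their only exit'' argument, the inequality you flag as the main obstacle has no visible source, so (c) remains unproved in your proposal.
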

Before proving Theorem \ref{th8}, we need the following lemma.
\begin{lemma}\label{le2}
Let $\mathcal {G}$ be a graph with vertex set $S$ (i.e. $\mathcal {G}$ is a simplicial complex of dimension $1$). If $S'$ is a subset of $S$, let $C^0(S')$ be the subgroup of $C^0(\mathcal {G})$ with basis $S'$. Then $\delta(C^0(S'))$ is a direct summand of $C^1(\mathcal {G})$.
\end{lemma}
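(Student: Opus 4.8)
The plan is to reduce to the connected case and then present $\delta\bigl(C^0(S')\bigr)$ as the span of part of a basis of $\delta\bigl(C^0(\mathcal{G})\bigr)$, which itself splits off from $C^1(\mathcal{G})$. First I would record the elementary computation that underlies everything: for a vertex $v$ one has $\delta(v)=\sum_{w}(w,v)$, so if we order the two endpoints of each edge $e$ as $(a_e,b_e)$, thereby fixing a basis of $C^1(\mathcal{G})$, the coefficient of $e$ in $\delta\bigl(\sum_v x_v v\bigr)$ is $x_{b_e}-x_{a_e}$. Hence $\ker\bigl(\delta\colon C^0(\mathcal{G})\to C^1(\mathcal{G})\bigr)$ consists exactly of the $0$-cochains that are constant on each connected component of $\mathcal{G}$; in particular, for a connected graph $\ker\delta=\zz\cdot\mathbf{1}$ with $\mathbf{1}=\sum_{v\in S}v$. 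Also recall that, since every square submatrix of the incidence matrix... more cheaply, since $\mathcal{G}$ is $1$-dimensional, $C^1(\mathcal{G})/\delta\bigl(C^0(\mathcal{G})\bigr)=H^1(\mathcal{G})$ is free (by the universal coefficient theorem, as $H_0(\mathcal{G})$ is free and $\operatorname{Hom}(H_1(\mathcal{G}),\zz)$ is free), so $\delta\bigl(C^0(\mathcal{G})\bigr)$ is a direct summand of $C^1(\mathcal{G})$.

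Because $C^1(\mathcal{G})=\bigoplus_\kappa C^1(\kappa)$, $C^0(S')=\bigoplus_\kappa C^0(S'\cap\kappa)$ and $\delta\bigl(C^0(S')\bigr)=\bigoplus_\kappa\delta\bigl(C^0(S'\cap\kappa)\bigr)$ as $\kappa$ runs over the components of $\mathcal{G}$, it is enough to treat the connected case. If $S'=S$, then $\delta\bigl(C^0(S')\bigr)=\delta\bigl(C^0(\mathcal{G})\bigr)$ is a direct summand of $C^1(\mathcal{G})$ by the remark above. If $S'\neq S$, choose an auxiliary vertex $v_0\in S\setminus S'$. Then $\{\mathbf{1}\}\cup\{v:v\in S,\ v\neq v_0\}$ is a basis of $C^0(\mathcal{G})$ (it spans, since $v_0=\mathbf{1}-\sum_{v\neq v_0}v$, and has cardinality equal to the rank), and since $\delta$ annihilates $\mathbf{1}$ and is injective on the span of $\{v:v\neq v_0\}$, the family $\{\delta(v):v\in S,\ v\neq v_0\}$ is a basis of $\delta\bigl(C^0(\mathcal{G})\bigr)$. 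As $S'\subseteq S\setminus\{v_0\}$, the subgroup $\delta\bigl(C^0(S')\bigr)$ is spanned by the sub-basis $\{\delta(v):v\in S'\}$, hence is a direct summand of $\delta\bigl(C^0(\mathcal{G})\bigr)$ (with complement the span of the remaining basis elements). Being a direct summand of a direct summand of $C^1(\mathcal{G})$, it is a direct summand of $C^1(\mathcal{G})$; reassembling over the components finishes the proof.

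The argument is essentially bookkeeping, and the only step requiring a little care is the identification of $\ker\bigl(\delta|_{C^0}\bigr)$ with the locally constant $0$-cochains, since this is exactly what permits the choice of $v_0$ outside $S'$ (and shows $\ker\delta=\zz\cdot\mathbf{1}$ per component). If one prefers to avoid choosing $v_0$, an equivalent route is to show directly that $C^1(\mathcal{G})/\delta\bigl(C^0(S')\bigr)$ is torsion-free: it fits in $0\to\delta\bigl(C^0(\mathcal{G})\bigr)/\delta\bigl(C^0(S')\bigr)\to C^1(\mathcal{G})/\delta\bigl(C^0(S')\bigr)\to H^1(\mathcal{G})\to 0$, the quotient $H^1(\mathcal{G})$ is free, and $\delta\bigl(C^0(\mathcal{G})\bigr)/\delta\bigl(C^0(S')\bigr)\cong C^0(\mathcal{G})/\bigl(C^0(S')+\ker\delta\bigr)$ is free because, after projecting $\zz^S$ onto $\zz^{S\setminus S'}$, the image of $\ker\delta$ is spanned by the vectors $\sum_{v\in\kappa\setminus S'}v$, which have pairwise disjoint supports. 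Either way, $\delta\bigl(C^0(S')\bigr)$ is a direct summand of $C^1(\mathcal{G})$.
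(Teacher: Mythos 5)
Your proof is correct. It shares the paper's overall architecture -- reduce to the path-connected case, dispose of $S'=S$ by splitting $0\to\delta(C^0(\mathcal{G}))\to C^1(\mathcal{G})\to H^1(\mathcal{G})\to 0$ using freeness of $H^1(\mathcal{G})$, and then reduce the general case to showing that $\delta(C^0(S'))$ is a direct summand of $\delta(C^0(\mathcal{G}))$ -- but the mechanism for that last step is genuinely different. The paper argues by contradiction: it assumes the quotient $\delta(C^0(\mathcal{G}))/\delta(C^0(S'))$ has $p$-torsion, writes $p\cdot\delta(\alpha)=\delta(\beta)$, and uses $\ker\delta=\mathbb{Z}\cdot\sum_{v\in S}(v)$ together with the existence of a vertex $u\notin S'$ with $a_u\neq 0$ to force $p\mid k$ and then $\beta=p\beta'$, contradicting $\delta(\alpha)\notin\delta(C^0(S'))$. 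You instead exhibit an explicit basis: picking $v_0\in S\setminus S'$, you note that $\{\mathbf{1}\}\cup\{v:v\neq v_0\}$ is a basis of $C^0(\mathcal{G})$, that $\delta$ kills $\mathbf{1}$ and is injective on the complementary span, so $\{\delta(v):v\neq v_0\}$ is a basis of $\delta(C^0(\mathcal{G}))$ of which $\{\delta(v):v\in S'\}$ is a sub-basis spanning $\delta(C^0(S'))$. Both arguments turn on exactly the same input, namely that $\ker(\delta|_{C^0})$ is generated by $\mathbf{1}$ on a connected graph, and on choosing a vertex outside $S'$; yours is constructive and arguably cleaner (it avoids the contradiction and makes the complement visible), while the paper's torsion-freeness argument is the one that generalizes most directly when no convenient adapted basis is available. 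Your closing alternative via torsion-freeness of $C^1(\mathcal{G})/\delta(C^0(S'))$ is essentially a streamlined version of what the paper does.
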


\begin{proof} Without loss of generality we  assume $\mathcal {G}$ is path-connected. In case $S'=S$, since $H^1(\mathcal {G})$ is always torsion free, the short exact sequence \[0\to \delta(C^0(\mathcal {G}))\to C^1(\mathcal {G})\to H^1(\mathcal {G})\to 0\] splits. Then the statement follows.

For the case $S'\subsetneq S$, we need only prove that $\delta(C^0(S'))$ is a direct summand of $\delta(C^0(\mathcal {G}))$. Suppose on the contrary that $\delta(C^0(\mathcal {G}))/\delta(C^0(S'))$ has a torsion subgroup such as $\mathbb{Z}_p$ ($p>0$). Thus there exists an element \[\alpha=\sum_{v\in S}a_v\cdot(v)\in C^0(\mathcal {G}),\quad a_v\in \mathbb{Z}\] and an element \[\beta=\sum_{v\in S'} b_v\cdot(v)\in C^0(S'),\quad b_v\in \mathbb{Z}\] such that $p\cdot\delta(\alpha)=\delta(\beta)$ and $\delta(\alpha)\notin \delta(C^0(S'))$. Since $H^0(\mathcal {G})=\mathbb{Z}$ (by assumption) is generated by $\sum_{v\in S}(v)$, we have $p\cdot\alpha-\beta=k\cdot\sum_{v\in S}(v)$ for some $k>0$. Since $\delta(\alpha)\notin \delta(C^0(S'))$, there must be a vertex $u\notin S'$, such that $a_u\neq 0$, then $p\cdot a_u=k$, and so $p\mid k$. For any vertex $v\in S',\ b_v=p\cdot a_v-k$. Thus $p\mid b_v$, and therefore we have $\beta=p\cdot \beta'$ for some $\beta'\in C^0(S')$. Since $C^1(\mathcal {G})$ is free, then  $p\cdot\delta(\alpha-\beta')=\delta(p\cdot\alpha-\beta)=0$ implies that $\delta(\alpha)=\delta(\beta')$, which is a contradiction.
\end{proof}

\begin{proof}[Proof of Theorem \ref{th8}]
(a)  We use a topological technique by applying lemma \ref{le2}. First we construct a graph $\mathcal {G}^n(K)$ for $n\geqslant0$ as follows. For each generator $\sigma\otimes\sigma$ of $E^{n,n}_1(K)$ (i.e., $\sigma$ is an $n$ dimensional facet of $K$), give a vertex $v_\sigma$ corresponding to $\sigma$. If $\sigma _i$ is an $(n-1)$-face of $\sigma$
($\{v_i\}=\sigma\setminus\sigma_i$) such that $v_i$ is a path-component of $\text{link}_K\sigma$, then
give a vertex $v_{\sigma_i}$ corresponding to $\sigma_i$. $\mathcal {G}^n(K)$ is defined to be a graph having all such $v_\sigma$ and $v_{\sigma_i}$ as vertices and all $(v_{\sigma_i},v_\sigma)$ as edges. Let $S$ be the vertex subset consisting of all $v_\sigma$ with $|\sigma|=n+1$. Then we have a commutative diagram:
\[
\xymatrix{C^0(S)\ar[d]^{\delta}\ar[rr]^-{\eta_0}_-\cong&&\underset{|\sigma|=n+1}{\bigoplus}\w H^{-1}(\mathrm{link}_K\sigma)\cong E^{n,n}_1(K)\ar[d]^{\Delta_1^{n,n}}\\
C^1(\mathcal {G}^n(K))\ar[r]^-{\eta_1}&\underset{|\sigma|=n}{\bigoplus}H^{0}(\mathrm{link}_K\sigma)\ar[r]^-h
&\underset{|\sigma|=n}{\bigoplus}\w H^{0}(\mathrm{link}_K\sigma)\cong E^{n-1,n}_1(K)}
\]
where $h$ is the natural quotient map, $\eta_0$ is defined by $\eta_0(v_\sigma)=(\varnothing)_\sigma$ (a generator of $H^{-1}(\mathrm{link}_K\sigma)$), and $\eta_1$ is defined by $\eta_1((v_{\sigma_i},v_\sigma))=v_i$. It is clear that $\mathrm{Im}\,\eta_1$ is a direct summand of $\bigoplus_{|\sigma|=n}H^{0}(\mathrm{link}_K\sigma)$. From this commutative diagram and lemma \ref{le2} we have Im$\Delta_1^{n,n}$ is a direct summand of $ E^{n-1,n}_1(K)$,  therefore  $\mathcal {H}^{n-1,n}(K)$ as a subgroup of
$E^{n-1,n}_1(K)/\text{Im}\Delta_1^{n,n}$ is free.

(b) C.F. the analysis in the proof of Theorem \ref{th6}.

(c) According to lemma \ref{le1} (a), we may assume $L_n$ is of dimension $n+1$. Let $\{\Gamma_j\}_{1\leqslant j\leqslant m_2}$ be the set of completely connected components of $L^{(n)}_n$. For any $K_i\subset L_n$, clearly $K_i^{(n)}= \Gamma_j$ for some $j$, so $m_1\geqslant m_2$.
According to theorem \ref{th6},
\[\mathcal {H}^{n+1,n+1}(L_n)=\mathbb{Z}^{m_1}\ \text{ and }\ \mathcal {H}^{n,n}(L^{(n)}_n)=\mathbb{Z}^{m_2}.\]

Start with the special case $m_2=1$. We claim that the differentials
\[\Delta_r\colon E_r^{n+1,n+1}(L_n)\to E_r^{n+1-r,n+2-r}(L_n)\] vanish for all $r>2$.
Suppose on the contrary that $\Delta_r([c])\neq 0$ for some cohomology class $[c]\in E_r^{n+1,n+1}(K)$ and some $r>2$. If
$c=\sum_{\sigma\in L_n} n_\sigma\cdot(\sigma\otimes\sigma)$ ($n_\sigma\in\mathbb{Z}$) is a representative of $[c]$ in
$N^{*,*}(L)$ (clearly, there is at least one $(n+1)$-simplex $\sigma$ such that $n_\sigma\neq0$), then by assumption \[\Delta (c)\in F^{n+1-r}(L_n)=\bigoplus_{i\leqslant n+1-r}N^{i,*}(L_n),\] and $[\Delta(c)]\neq0$ in $E_t^{n+1-r,n+2-r}(L_n)$ for $t\leqslant r$.

Let $f$ be the inclusion $L^{(n)}_n\hookrightarrow L_n$ and let $b=f^*(c)$ (see the proof of Theorem \ref{thm:2} for the definition of $f^*$).
Since $\Delta (c)\in F^{n+1-r}(L_n)$ and $r>2$, there must be an $n$-simplex $\tau\in L_n$ such that $n_\tau\neq0$, then $b\in F^{n}(L^{(n)}_n)$, $b\notin F^{n-1}(L^{(n)}_n)$, and $\Delta(b)=f^*(\Delta(c))\in F^{n+1-r}(L^{(n)}_n)$,
thus $[b]$ is an element of $E_{r-1}^{n,n}(L^{(n)}_n)$. Since \[\Delta_{r-1}([b])=[\Delta(b)]=[f^*(\Delta(c))]=f^*([\Delta(c)]),\] and by Lemma \ref{le1} (b) \[f^*:E^{n+1-r,n+2-r}_{r-1}(L^{(n)}_n)\cong E^{n+1-r,n+2-r}_{r-1}(L_n),\] then $\Delta_{r-1}([b])\neq0$.
But this is contrary to the fact that
$\mathcal {H}^{n,n}(L^{(n)}_n)=\mathbb{Z}$ survive to $E_\infty^{n,n}(L^{(n)}_n)=\mathbb{Z}$ (Lemma \ref{lem:3}).

From this conclusion and the fact that $\mathcal {H}^{n+1,n+1}(L_n)=\mathbb{Z}^{m_1},\ E_\infty^{n+1,n+1}(L_n)=\mathbb{Z}$ (Lemma \ref{lem:3}), it follows that the image of $\Delta_2\colon \mathcal {H}^{n+1,n+1}(L_n)\to \mathcal {H}^{n-1,n}(L_n)$ must be isomorphic to $\mathbb{Z}^{m-1}$. Hence
rank\,$\mathcal {H}^{n-1,n}(L_n)\geqslant m_1-1$.

For the general case, Let $M_j=\bigcup_{K_i^{(n)}=\Gamma_j}K_i$. Then by Lemma \ref{lem:2} $\mathrm{dim}(M_j\cap M_{j'})<n$ if $j\neq j'$.
Thus by the same reasoning as in the proof of Theorem \ref{th6}, we have
\[\mathcal {H}^{n-1,n}(L_n)=\bigoplus_{j=1}^{m_2}\mathcal {H}^{n-1,n}(M_j).\]
Then the theorem follows immediately.
\end{proof}
\begin{lemma}\label{th4}
Let $K$ be a simplicial complex, then:
\[\sum_{0\leqslant p\leqslant q}(-1)^{q-p}\cdot{\rm rank}\,\mathcal {H}^{p,\,q}(K)=\chi(K),\] where $\chi(K)$ is the Euler characteristic of $K$.
\end{lemma}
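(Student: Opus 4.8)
The plan is to exploit the convergence of the cohomeology spectral sequence to $H^*(K)$ established in Proposition \ref{prop:1}, together with the invariance of the (graded) Euler characteristic under passage to homology. Throughout, recall that $\mathcal{H}^{*,*}(K)=E_2^{*,*}(K)$ and that $\Delta_r\colon E_r^{p,q}(K)\to E_r^{p-r,q-r+1}(K)$ raises the total degree $n:=q-p$ by one.

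First I would record the vanishing range. By Lemma \ref{lem:1}, $E_1^{p,q}(K)\cong\bigoplus_{|\sigma|=p+1}\w H^{q-p-1}(\mathrm{link}_K\sigma)$, where the chain simplices $\sigma$ occurring in $N^{*,*}(K)$ are nonempty, so $p\geqslant 0$, and reduced cohomology $\w H^{q-p-1}$ vanishes for $q-p-1<-1$. Hence $E_1^{p,q}(K)=0$, and therefore $\mathcal{H}^{p,q}(K)=E_2^{p,q}(K)=0$, unless $0\leqslant p\leqslant q$. In particular the sum over $0\leqslant p\leqslant q$ in the statement coincides with the sum over all $(p,q)$.

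Next, group the bigraded pages by total degree, setting $(E_r(K))_n:=\bigoplus_{q-p=n}E_r^{p,q}(K)$. Since $\Delta_r$ raises $n$ by one and $\Delta_r\Delta_r=0$, each page $\big((E_r(K))_*,\Delta_r\big)$ is a cochain complex whose $n$-th cohomology is $(E_{r+1}(K))_n$. Because $K$ is finite, all these groups are finitely generated and the bigrading is confined to the bounded region $0\leqslant p\leqslant q\leqslant\dim K$, so the Euler–Poincar\'e principle applies verbatim on each page and gives $\sum_n(-1)^n\,\mathrm{rank}\,(E_{r+1}(K))_n=\sum_n(-1)^n\,\mathrm{rank}\,(E_r(K))_n$; thus this alternating sum of ranks is independent of $r$. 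Evaluating at $r=2$ yields $\sum_{0\leqslant p\leqslant q}(-1)^{q-p}\,\mathrm{rank}\,\mathcal{H}^{p,q}(K)$. Evaluating at $r=\infty$: $(E_\infty(K))_n$ is the associated graded of the finite filtration $F^{\bullet}$ on $H^n\big(T^*(N(K))\big)$, so $\mathrm{rank}\,(E_\infty(K))_n=\mathrm{rank}\,H^n\big(T^*(N(K))\big)=\mathrm{rank}\,H^n(K)$ by Proposition \ref{prop:1}; hence the common value equals $\sum_n(-1)^n\,\mathrm{rank}\,H^n(K)$, which is $\chi(K)$ by the classical Euler–Poincar\'e formula for a finite complex (using $\mathrm{rank}\,H^n(K)=\mathrm{rank}\,H_n(K)$ from universal coefficients). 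Comparing the two evaluations proves the lemma.

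I do not expect a genuine obstacle here; the only points needing care are purely bookkeeping: (i) confirming that the spectral sequence is concentrated in a bounded region so that all ranks are finite and the Euler–Poincar\'e relation is legitimate page by page, and (ii) correctly matching the total degree $n=q-p$ used by the cohomeology filtration with the cohomological degree of $H^*(K)$ in Proposition \ref{prop:1}.
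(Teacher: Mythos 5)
Your proposal is correct and follows essentially the same route as the paper: both arguments use the page-by-page invariance of the alternating sum of ranks (Euler--Poincar\'e on each $(E_r,\Delta_r)$) together with the convergence of the cohomeology spectral sequence to $H^*(K)$ from Proposition \ref{prop:1}. Your additional bookkeeping on the vanishing range $0\leqslant p\leqslant q$ and the finiteness of the pages is a welcome but minor elaboration of what the paper leaves implicit.
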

\begin{proof}
Since $E_{r+1}^{*,*}(K)=H(E_{r}^{*,*}(K),\Delta_r)$, then
 \[\sum_{0\leqslant p\leqslant q}(-1)^{q-p}\cdot\textrm{rank}\,E_{r+1}^{p,\,q}(K)=\sum_{0\leqslant p\leqslant q}(-1)^{q-p}\cdot\textrm{rank}\,E_{r}^{p,\,q}(K)\]
for all $r\geqslant 1$.
On the other hand since the spectral sequence $\{E^{*,*}_r(K)\}$ converges to the cohomology group $H^*(K)$, then
\[\sum_{0\leqslant p\leqslant q}(-1)^{q-p}\cdot\textrm{rank}\,E_{\infty}^{p,\,q}(K)=\sum_{n\geqslant0}(-1)^n\cdot \text{rank}\, H^n(K)=\chi(K).\]
Then the conclusion follows.
\end{proof}
\begin{theorem}\label{th7}
Let $K, L$ be two simplicial complexes. If $K\cap L=2^\sigma$ with $\mathrm{dim}\,\sigma=n$, then we can give $\mathcal {H}^{*,*}(K\cup L)$
in terms of $\mathcal {H}^{*,*}(K)$ and $\mathcal {H}^{*,*}(L)$ by the following formulae:

(a) If $K$ (or $L$) has a completely connected component of dimension $n$ containing $\sigma$, then
\[
\mathcal {H}^{p,\,q}(K\cup L)=
\begin{cases}
\big(\mathcal {H}^{p,\,q}(K)\oplus \mathcal {H}^{p,\,q}(K)\big)/\mathbb{Z}& \ \text{for}\ p=q=n\\
\mathcal {H}^{p,\,q}(K)\oplus \mathcal {H}^{p,\,q}(K)& \ \text{otherwise}
\end{cases}
\]

(b) If the condition in (a) dose not hold but $K$ or $L$ has a completely connected component of dimension $n+1$ containing $\sigma$, and if

\quad (i) $\sigma$ is a facet in $K$ or in $L$, then
\[
\mathcal {H}^{p,\,q}(K\cup L)=
\begin{cases}
\mathcal {H}^{p,\,q}(K)\oplus \mathcal {H}^{p,\,q}(K)\oplus\mathbb{Z}& \ \text{for}\ p=n-1,\ q=n\\
\mathcal {H}^{p,\,q}(K)\oplus \mathcal {H}^{p,\,q}(K)& \ \text{otherwise}
\end{cases}
\]

\quad (ii) $\sigma$ is not a facet in both $K$ and $L$, then
\[
\mathcal {H}^{p,\,q}(K\cup L)=
\begin{cases}
\mathcal {H}^{p,\,q}(K)\oplus \mathcal {H}^{p,\,q}(K))/\mathbb{Z}& \ \text{for}\ p=q=n+1\\
\mathcal {H}^{p,\,q}(K)\oplus \mathcal {H}^{p,\,q}(K)& \ \text{otherwise}
\end{cases}
\]

(c) If the completely connected components of $K$ and $L$, which contain $\sigma$, all have dimension larger than $n+1$, and if

\quad (i) $\sigma$ is a facet of $K$ or $L$, then
\[
\mathcal {H}^{p,\,q}(K\cup L)=
\begin{cases}
\mathcal {H}^{p,\,q}(K)\oplus \mathcal {H}^{p,\,q}(K)\oplus\mathbb{Z}& \ \text{for}\ p=n-1,\ q=n\\
\mathcal {H}^{p,\,q}(K)\oplus \mathcal {H}^{p,\,q}(K)& \ \text{otherwise}
\end{cases}
\]

\quad (ii) $\sigma$ is not a facet in both $K$ and $L$, then
\[
\mathcal {H}^{p,\,q}(K\cup L)=
\begin{cases}
\mathcal {H}^{p,\,q}(K)\oplus \mathcal {H}^{p,\,q}(K)\oplus\mathbb{Z}& \ \text{for}\ p=n,\ q=n+1\\
\mathcal {H}^{p,\,q}(K)\oplus \mathcal {H}^{p,\,q}(K)& \ \text{otherwise}
\end{cases}
\]
\end{theorem}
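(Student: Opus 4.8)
The strategy is a Mayer--Vietoris argument carried out at the level of the double complexes $N^{*,*}$, which reduces the computation to the elementary double complex $N^{*,*}(2^\sigma)$.

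\emph{Step 1: a short exact sequence of double complexes.} For a subcomplex $L_0\subseteq K_0$ the inclusion of chain groups does not commute with $\delta$, so it is \emph{not} a morphism $N^{*,*}(K_0)\to N^{*,*}(L_0)$; however, the \emph{restriction} homomorphism $r$, defined by $r(\alpha\otimes\beta)=\alpha\otimes\beta$ if $\alpha,\beta\in L_0$ and $r(\alpha\otimes\beta)=0$ otherwise, is a morphism of double complexes (the verification is the one already made for $f^{*}$ in the proof of Theorem~\ref{thm:2}, applied to the inclusion $f$). Using the restrictions associated with $2^\sigma\hookrightarrow K\hookrightarrow K\cup L$ and $2^\sigma\hookrightarrow L\hookrightarrow K\cup L$, and the hypothesis $K\cap L=2^\sigma$, I would check that
\[0\longrightarrow N^{*,*}(K\cup L)\xrightarrow{(r_K,\,r_L)}N^{*,*}(K)\oplus N^{*,*}(L)\xrightarrow{r'_K-r'_L}N^{*,*}(2^\sigma)\longrightarrow 0\]
is exact in every bidegree; the only points requiring care are the surjectivity of $r'_K-r'_L$ and the identification $\ker(r_K,r_L)=0$, both of which follow because every generator $\alpha\otimes\beta$ of $N^{*,*}(K\cup L)$ has $\beta\in K$ or $\beta\in L$ (hence $\alpha$ in the same complex). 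Since $r_K$, $r_L$ preserve the first filtration $F^{p}$, the sequence restricts on each $F^{p}/F^{p-1}$ to a short exact sequence of $(1\otimes\delta)$-cochain complexes, and taking cohomology yields, for each fixed $p$, a long exact sequence of $E_1$-terms
\[\cdots\to E_1^{p,q}(K\cup L)\to E_1^{p,q}(K)\oplus E_1^{p,q}(L)\to E_1^{p,q}(2^\sigma)\xrightarrow{\ \partial\ }E_1^{p,q+1}(K\cup L)\to\cdots\]
all of whose maps commute with $\Delta_1$, being induced by morphisms of double complexes.

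\emph{Step 2: localization to two bidegrees.} Because $|2^\sigma|=D^{n}$ and each $\mathrm{link}_{2^\sigma}\tau=2^{\sigma\setminus\tau}$ is contractible unless $\tau=\sigma$, Lemma~\ref{lem:1} (or Example~\ref{exmp:1}) gives $E_1^{p,q}(2^\sigma)=\mathbb{Z}$ for $(p,q)=(n,n)$ and $E_1^{p,q}(2^\sigma)=0$ otherwise, with a generator the class of $\sigma\otimes\sigma$. Plugging this into the long exact sequence shows that $E_1^{*,q}(K\cup L)\to E_1^{*,q}(K)\oplus E_1^{*,q}(L)$ is an isomorphism of $\Delta_1$-complexes for $q\notin\{n,n+1\}$, and an isomorphism in every bidegree $(p,q)$ with $p\neq n$; hence $\mathcal{H}^{p,q}(K\cup L)\cong\mathcal{H}^{p,q}(K)\oplus\mathcal{H}^{p,q}(L)$ outside the two positions $(n,n)$ and $(n,n+1)$. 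All remaining information lies in the five-term exact sequence
\[0\to E_1^{n,n}(K\cup L)\to E_1^{n,n}(K)\oplus E_1^{n,n}(L)\xrightarrow{\ \phi\ }\mathbb{Z}\xrightarrow{\ \partial\ }E_1^{n,n+1}(K\cup L)\to E_1^{n,n+1}(K)\oplus E_1^{n,n+1}(L)\to 0\]
together with the $\Delta_1$-differentials into and out of the positions $(n,n)$, $(n-1,n)$, $(n,n+1)$ and $(n+1,n+1)$. The map $\phi$ is the difference of the two restrictions onto the $[\sigma\otimes\sigma]$-summand of $E_1^{n,n}(2^\sigma)$, so $\phi$ is surjective exactly when $\sigma$ is a facet of $K$ or of $L$ (then $\w H^{-1}(\mathrm{link}_K\sigma)$ or $\w H^{-1}(\mathrm{link}_L\sigma)$ is $\mathbb{Z}$ and carries $[\sigma\otimes\sigma]$), and $\phi=0$ when $\sigma$ is a facet of neither. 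In the first case $\partial=0$ and the discrepancy is a single copy of $\mathbb{Z}$ at $(n,n)$ in the middle term; in the second case $\partial$ is injective and the discrepancy is a single $\mathbb{Z}$, $w:=\partial(1)$, at $(n,n+1)$. This is precisely the alternative separating subcases (i) and (ii) of parts (b) and (c), and it also accounts for part (a).

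\emph{Step 3: tracking the lone $\mathbb{Z}$ through the spectral sequence.} It remains to decide where this single extra (or missing) copy of $\mathbb{Z}$ settles after running $\Delta_1,\Delta_2,\dots$, and this is where I expect the real work, and the genuine obstacle, to lie. The plan is first to use the localization Lemma~\ref{le1} together with Lemma~\ref{lem:2} — which, exactly as in the proofs of Theorems~\ref{th6} and~\ref{th8}, lets the completely connected component of $\sigma$ be split off as a direct summand in the relevant range of bidegrees — to reduce to the situation in which $K$ and $L$ each consist of a single completely connected component through $\sigma$, of dimension $n$, $n+1$, or $>n+1$. One then tracks the class of $\sigma\otimes\sigma$: following the computation in the proof of Theorem~\ref{th6}, it is a $\Delta_1$-cocycle at $(n,n)$ precisely when $\sigma$ lies in an $n$-dimensional completely connected component, in which case the identified $\mathbb{Z}$ survives to $E_2^{n,n}$ and, $\mathcal{H}^{n,n}$ being free (Theorem~\ref{th6}), one gets the quotient formula of part (a); if $\sigma$ lies only in higher-dimensional components, $\Delta_1$ carries $[\sigma\otimes\sigma]$ nontrivially to $(n-1,n)$, so the deficiency at $(n,n)$ reappears as a surplus $\mathbb{Z}$ at $E_2^{n-1,n}$, and since $\mathcal{H}^{n-1,n}$ is free (Theorem~\ref{th8}(a)) the extension splits, giving parts (b)(i) and (c)(i). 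In the ``not a facet'' branch one analyses $w=\partial(1)$ in the same spirit: by the mechanism underlying Theorem~\ref{th8}(c), $w$ is hit by $\Delta_r$ from $E_r^{n+1,n+1}(K\cup L)$ for some $r\geq1$ exactly when $\sigma$ belongs to an $(n+1)$-dimensional completely connected component — the value of $r$ measuring how far $\sigma$ is from a facet — and then the surplus is transferred to the quotient $(\mathcal{H}^{n+1,n+1}(K)\oplus\mathcal{H}^{n+1,n+1}(L))/\mathbb{Z}$ of part (b)(ii); if every completely connected component through $\sigma$ has dimension $>n+1$, no such differential exists, $w$ survives to $E_2^{n,n+1}$, and one lands in part (c)(ii). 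The crux is the final check that in each of the six cases this solitary $\mathbb{Z}$ is not cancelled by any \emph{other} differential: here the dimension inequalities of Lemma~\ref{le1}(b) confine all potentially interfering differentials to a range where Lemma~\ref{lem:3} forces the relevant $E_\infty$-groups to vanish, while the torsion-freeness results (Theorems~\ref{th6} and~\ref{th8}(a)) guarantee that every ``$\oplus\,\mathbb{Z}$'' and ``$/\mathbb{Z}$'' in the conclusion is literal and not merely a statement about ranks.
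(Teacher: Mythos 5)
Your Steps 1 and 2 are sound and essentially reproduce, via a Mayer--Vietoris short exact sequence of double complexes, the same $E_1$-level comparison that the paper obtains directly from Lemma \ref{lem:1} by comparing $\mathrm{link}_{K\cup L}\sigma$ with $\mathrm{link}_K\sigma$ and $\mathrm{link}_L\sigma$: the only $E_1$-discrepancy is a single $\mathbb{Z}$ at $(n,n)$ (removed) or at $(n,n+1)$ (added), according as $\sigma$ is or is not a facet of $K$ or $L$. One caveat: your sentence ``hence $\mathcal{H}^{p,q}(K\cup L)\cong\mathcal{H}^{p,q}(K)\oplus\mathcal{H}^{p,q}(L)$ outside the two positions $(n,n)$ and $(n,n+1)$'' does not follow from an $E_1$-isomorphism outside column $n$ --- $\mathcal{H}=E_2$ also involves $\Delta_1$ out of column $n$, and indeed the theorem's own conclusion locates discrepancies at $(n-1,n)$ and $(n+1,n+1)$; your subsequent list of four positions is the correct one, so this is an internal inconsistency rather than a fatal error.

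The genuine gap is Step 3, and it is not only incomplete but pitched at the wrong level of the spectral sequence. The theorem is a statement about $E_2$, so only $\Delta_1$ is relevant; your proposed mechanism --- that the surplus class $w$ at $(n,n+1)$ is ``hit by $\Delta_r$ for some $r\geqslant 1$, the value of $r$ measuring how far $\sigma$ is from a facet,'' and that interfering differentials are controlled because Lemma \ref{lem:3} forces certain $E_\infty$-groups to vanish --- cannot decide anything about $E_2$: if $w$ were only killed by $\Delta_r$ with $r\geqslant 2$, the conclusion of (b)(ii) would be false. What actually closes the argument in the paper is a pair of tools you never invoke. First, Theorem \ref{th6} computes the diagonal groups \emph{exactly} (not just their behaviour on one generator): the count of $n$- resp.\ $(n+1)$-dimensional completely connected components satisfies $n_0=n_1+n_2-1$ or $n_0=n_1+n_2$ according to the case, which pins down $\mathcal{H}^{n,n}(K\cup L)$ and $\mathcal{H}^{n+1,n+1}(K\cup L)$ and, equivalently, the rank of $\ker\Delta_1$ and hence of $\operatorname{im}\Delta_1$ on the diagonal. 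Second, the Euler-characteristic identity of Lemma \ref{th4} together with $\chi(K\cup L)=\chi(K)+\chi(L)-1$ forces the rank at the one remaining bidegree ($(n-1,n)$ or $(n,n+1)$), and the freeness results (Theorems \ref{th6} and \ref{th8}(a)) convert that rank statement into the asserted isomorphism. (The paper also verifies explicitly that $\Delta_1[\xi]=0$ for the extra generator at $(n,n+1)$, which is needed to rule out a discrepancy at $(n-1,n+1)$ --- a position your analysis does not address.) Your direct tracking of $\Delta_1$ on the single generator $[\sigma\otimes\sigma]$ can be made to work in cases (a), (b)(i), (c)(i), but in the ``not a facet'' cases it must be replaced by the rank bookkeeping above, and as written the proposal supplies neither.
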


Before proving the theorem, we need a fact that $\chi(K\cup L)=\chi(K)+\chi(L)-1$, which follows from the Mayer-Vietoris sequence for
$(K\cup L,\,K,\,L)$.

\begin{proof}[proof of Theorem \ref{th7}]
(a)\, In this case, according to Lemma \ref{lem:1}, an easy verification shows that
\[
E_1^{p,\,q}(K\cup L)=
\begin{cases}
\big(E_1^{p,\,q}(K)\oplus E_1^{p,\,q}(L)\big)/\mathbb{Z}& \ \text{for}\ p=q=n\\
E_1^{p,\,q}(K)\oplus E_1^{p,\,q}(L)& \ \text{otherwise}
\end{cases}
\]
So the only possible $p,\,q$ for which $\mathcal {H}^{p,\,q}(K\cup L)\neq \mathcal {H}^{p,\,q}(K)\oplus \mathcal {H}^{p,\,q}(K)$ are $p=q=n$ and $p=n-1,\ q=n$. Let $n_0,\,n_1$ and $n_2$ are the numbers of completely connected components of dimension $n$ of $K\cup L$, $K$ and $L$ respectively.
Then by assumption, $n_0=n_1+n_2-1$. Theorem \ref{th6} gives that
\[\mathcal {H}^{n,n}(K\cup L)=(\mathcal {H}^{n,n}(K)\oplus \mathcal {H}^{n,n}(L))/\mathbb{Z}.\]
According to Lemma \ref{th4} and the preliminary argument before this proof, we have
\begin{equation}
\begin{split}\label{eq:1}
\sum_{0\leqslant p\leqslant q}(-1)^{q-p}\cdot{\rm rank}\,\mathcal {H}^{p,\,q}(K\cup L)=\sum_{0\leqslant p\leqslant q}(-1)^{q-p}\cdot{\rm rank}\,\mathcal {H}^{p,\,q}(K)\\+\sum_{0\leqslant p\leqslant q}(-1)^{q-p}\cdot{\rm rank}\,\mathcal {H}^{p,\,q}(L)-1
\end{split}
\end{equation}
These formulae together imply that
\[\mathrm{rank}\,\mathcal {H}^{n-1,n}(K\cup L)=\mathrm{rank}\,\mathcal {H}^{n-1,n}(K)+\mathrm{rank}\,\mathcal {H}^{n-1,n}(L).\]
Then we get the desired formula by appealing to theorem \ref{th8} (a).

(b)-(i)\, As in (a) the only possible $p,\,q$ for which $\mathcal {H}^{p,\,q}(K\cup L)\neq \mathcal {H}^{p,\,q}(K)\oplus \mathcal {H}^{p,\,q}(K)$ are $p=q=n$ and $p=n-1,\ q=n$. But evidently
\[\mathcal {H}^{n,n}(K\cup L)=\mathcal {H}^{n,n}(K)\oplus \mathcal {H}^{n,n}(K)\] in this case.
So formula \eqref{eq:1} implies that
\[\text{rank}\,\mathcal {H}^{n-1,n}(K\cup L)=\text{rank}\,\mathcal {H}^{n-1,n}(K)+\,\text{rank}\,\mathcal {H}^{n-1,n}(L)+1.\]
Appealing to theorem \ref{th8} (a) again, we have \[\mathcal {H}^{n-1,n}(K\cup L)=\mathcal {H}^{n-1,n}(K)\oplus \mathcal {H}^{n-1,n}(K)\oplus\mathbb{Z},\]
and so get the desired formula.

(b)-(ii)\, In this case
\[
E_1^{p,\,q}(K\cup L)=
\begin{cases}
E_1^{p,\,q}(K)\oplus E_1^{p,\,q}(L)\oplus\mathbb{Z}& \ \text{for}\ p=n,\ q=n+1\\
E_1^{p,\,q}(K)\oplus E_1^{p,\,q}(L)& \ \text{otherwise}
\end{cases}
\]
The added $\mathbb{Z}$ summand comes from
\[\w H^0(\text{link}_{K\cup L}\sigma)=\w H^0(\text{link}_K\sigma)\oplus\w H^0(\text{link}_L\sigma)\oplus\mathbb{Z}.\]
A generator of this $\mathbb{Z}$ summand is represented by
\[\xi=\sum_{(v)\in \text{link}_K\sigma}\sigma\otimes\sigma*(v).\]
For any $(n-1)$-face $\sigma_i$ of $\sigma$, let $p_{\sigma_i}:N^{*,*}\to N^{\sigma_i,*}$ be the projection. Then 
\[p_{\sigma_i}\comp (d\otimes1)(\xi)=\pm \sum_{(v)\in \text{link}_K\sigma}\sigma_i\otimes\sigma*(v)=\pm (1\otimes\delta)(\sigma_i\otimes\sigma).\]
It follows that $\Delta_1([\xi])=0$. Thus the only possible $p,\,q$ for which $\mathcal {H}^{p,\,q}(K\cup L)\neq \mathcal {H}^{p,\,q}(K)\oplus \mathcal {H}^{p,\,q}(K)$ are $p=q=n+1$ and $p=n,\ q=n+1$. Let $k_0,\,k_1$ and $k_2$ are the numbers of completely connected components of dimension $n+1$ of $K\cup L$, $K$ and $L$ respectively. It is easy to see that $k_0=k_1+k_2-1$ in this case. Thus Theorem \ref{th6} gives
that \[\mathcal {H}^{n+1,n+1}(K\cup L)=(\mathcal {H}^{n+1,n+1}(K)\oplus \mathcal {H}^{n+1,n+1}(K))/\mathbb{Z},\]
and so the desired formula follows.

(c)-(i)\, The proof is completely identical with (b)-(i).

(c)-(ii)\, The proof is similar with (b)-(ii). we need only notice that
\[\mathcal {H}^{n+1,n+1}(K\cup L)=\mathcal {H}^{n+1,n+1}(K)\oplus \mathcal {H}^{n+1,n+1}(K)\] in this case.
\end{proof}
\providecommand{\bysame}{\leavevmode\hbox to3em{\hrulefill}\thinspace}
\providecommand{\MR}{\relax\ifhmode\unskip\space\fi MR }
\providecommand{\MRhref}[2]{%
  \href{http://www.ams.org/mathscinet-getitem?mr=#1}{#2}
}
\providecommand{\href}[2]{#2}

\end{document}